\theoremstyle{definition}
\newtheorem{defin}{Definition}[section]
\newtheorem{ex}[defin]{Example}
\theoremstyle{plain}
\newtheorem{theo}[defin]{Theorem}
\newtheorem{lemma}[defin]{Lemma}
\newtheorem{obs}[defin]{Remark}
\newtheorem{prop}[defin]{Proposition}
\newtheorem{cor}[defin]{Corollary}
\newtheorem*{conj}{Conjecture}
\newtheorem*{theo-intro}{Theorem}
\newtheorem{theorem}{Theorem}
\newtheorem{corollary}[theorem]{Corollary}
\renewenvironment{abstract}
{\par\noindent\textbf{\abstractname.}\ \ignorespaces}
{\par\medskip}
\title{Thin actions on CAT(0)-spaces}
\author{Nicola Cavallucci}
\thanks{N.Cavallucci is partially supported by the SFB/TRR 191, funded by the DFG.}
\author{Andrea Sambusetti}
\thanks{A. Sambusetti is member of the Differential Geometry section of the GNSAGA-INdAM.}
\address{Andrea Sambusetti, Dipartimento di Matematica “Guido
	Castelnuovo”, SAPIENZA Universita di Roma, Piazzale Aldo Moro 5, I-00185 `
	Roma.}
\address{Nicola Cavallucci, Karlsruhe Institute of Technology, Engelstrasse 2, D-76128 Karlsruhe}
\email{n.cavallucci23@gmail.com, sambuset@mat.uniroma1.it}
\date{}
\begin{document}
\maketitle

\footnotesize
\begin{abstract}
	We study groups of isometries of packed, geodesically complete, CAT$(0)$-spaces for which the systole at every point is smaller than a universal constant depending only on the packing, deducing strong rigidity results. We show that if a space as above has some negative curvature behaviour then it cannot support a thin action: this generalizes the classical Margulis Lemma to a broader class of spaces.
\end{abstract}
\normalsize


\section{Introduction}

The classical Margulis-Heintze Lemma is a cornerstone in the study of negatively curved manifolds: it  asserts that there exists some constant $\varepsilon(n,\kappa)>0$ such that for any torsionless lattice $\Gamma$ of a complete, Riemannian $n$-dimensional manifold $X$ with sectional curvature $-\kappa^2\leq K(X)<0$ there exists a point $x$ such that   $$\text{sys}(\Gamma,x):=\inf_{g \in \Gamma^\ast}d(x,gx) > \varepsilon(n,\kappa),$$ 
where $\Gamma^\ast$ is the subset of nontrivial elements of $\Gamma$ (see  \cite{Hei},  \cite{BGS13}, \cite{BZ}). Accordingly, for any discrete group $\Gamma$ acting by isometries on a metric space $X$ we will call   {\em diastole} of the action (or of the quotient $M=\Gamma \backslash X$), the value 
 $$  \text{dias}(\Gamma, X) = \sup_{x \in X} \inf_{g \in \Gamma^\ast} d(x,gx).$$  
We say that the action of $\Gamma$ is \emph{$\varepsilon$-thin} if $\text{dias}(\Gamma,X)<\varepsilon$ (since the subset of points $x$ where $\text{sys}(\Gamma,x)<\varepsilon$ is classically known as the $\varepsilon$-thin subset of X).\\ \noindent Buyalo first, in dimension smaller than $5$  \cite{Buy1}-\cite{Buy2}, and Cao-Cheeger-Rong  later, in any dimension \cite{CCR01},  studied the possibility of extending the Margulis-Heintze Lemma to {\em uniform lattices of non-positively curved} manifolds. 
They proved the following alternative: either the diastole is bounded below by a universal positive constant $\eta(n,\kappa) > 0$, or the action of $\Gamma$ is $\eta$-thin and the manifold $\Gamma\backslash X$ admits  a so-called {\em abelian local splitting structure}. This is, roughly speaking, a decomposition of $X$ into a union of minimal sets of hyperbolic isometries with the additional property that if two minimal sets intersect then the corresponding isometries commute.
%
%

\begin{ex}[\cite{Gro78}, Section 5 and \cite{Buy81}, Section 4]
	\label{example-Gromov-Buyalo} 
	Consider two copies $\Sigma_1, \Sigma_2$ of a hyperbolic surface with connected, geodesic boundary of length $\ell$, then take the products $M_i= \Sigma_i \times S^1$ with a circle of length $\ell$, and glue $M_1$ to $M_2$ by means of an isometry  $\varphi$ of the boundary tori $\partial M_i= \partial \Sigma_i \times S^1$ which interchanges the circles $\partial \Sigma_i$ with  $S^1$.   This yields a nonpositively curved $3$-manifold $M$ (a {\em graph manifold}) with sectional curvature $-1 \leq k(M) \leq 0$ whose  diastole can be chosen arbitrarily small, by taking $\ell \rightarrow 0$. This manifold is the prototypical example of a nonpositively curved manifold with an abelian local splitting structure: calling $g_1, g_2$ the hyperbolic isometries of $X=\widetilde M$ corresponding to the boundary loops $\partial \Sigma_i$, the universal cover $\widetilde X$ is the union of the two closed, minimum sets Min$(g_i) \cong \widetilde  \Sigma_i \times {\mathbb R}$ (where $g_i$ acts as $id$ times a translation) and Min$(g_1) \cap $Min$(g_2)$ is the gluing torus, on which $\langle g_1,g_2 \rangle$ acts on as an abelian group.
\end{ex}

\vspace{1mm}
In this paper, we will generalize this result to groups acting (not necessarily cocompactly) on CAT(0)-spaces and we draw from it a 
description of spaces with thin actions. In the framework of CAT(0)-spaces,  the lower sectional curvature bound appearing in the assumptions of the classical Margulis-Heintze Lemma can  be naturally replaced by a uniform, local packing condition: we say that a metric space $(X,d)$ satisfies the {\em $P_0$-packing condition at scale $r_0 > 0$}
 if all balls of radius $3r_0$ in $X$ contain at most $P_0$ points that are $2r_0$-separated from each other.  
In \cite{CavS20} is proved that, for complete and geodesically complete CAT(0)-spaces, a packing condition at some scale $r_0$ yields an explicit, uniform control of the packing function at any other  scale $r$: therefore, for these spaces, this condition is  equivalent to similar conditions which have been considered by other authors with different names (``uniform compactness of the family of $r$-balls'' in \cite{Gr81}; ``geometrical boundedness'' in \cite{DY}, etc.). 
This packing condition should be thought as a macroscopic  replacement of a lower bound on the curvature:  for Riemannian manifolds it is   strictly weaker than a lower bound of the Ricci curvature, and for general metric spaces it is   weaker than asking that $X$ is  a space with curvature bounded below in the sense of Alexandrov (see [BCGS17], Sec.3.3, for different examples and comparison between packing and curvature conditions). 
Although much weaker than a curvature bound, the  packing condition implies, by   Breuillard-Green-Tao's work, an analogue  of the celebrated Margulis' Lemma for a general metric space  $X$: {\em there exists a constant $\varepsilon_0>0$, only depending on the packing constants $P_0,r_0$,
such that for every discrete group of isometries $\Gamma$ of   $X$ 
the 	$\varepsilon_0$-almost stabilizer $\Gamma_{\varepsilon_0} (x)$ of any point $x$ is virtually nilpotent}  (cp. \cite{BGT11} and Section \S\ref{subsec-packed,margulis} below for details).
 We call this $\varepsilon_0=\varepsilon_0 (P_0, r_0)$ the {\em Margulis constant}, since it plays the role of the classical Margulis constant in this metric setting.

\noindent There are a lot of non-manifolds examples in the class of packed, CAT(0)-spaces: limits of Hadamard manifolds with curvature bounded below;  simplicial complexes with locally constant curvature and bounded geometry (also called $M_\kappa$-complexes, cp. [BH13] and \cite{CavS20}), Euclidean and hyperbolic buildings with bounded geometry in particular; nonpositively curved cones over compact CAT(1)-spaces,  etc.
\vspace{2mm}

\noindent  The first theorem of this paper generalizes Cao-Cheeger-Rong's result to this  broader setting, taking into account also non-compact actions of groups, possibly  with  torsion:
 
\begin{theorem}[Extract from Theorem \ref{theo-alternative}] ${}$
\label{maintheorem}

\noindent There exist two functions  $\lambda_0= \lambda_0 (P_0, r_0)$, $\eta_0=(P_0, r_0)>0$  with the following property:
for any  complete, geodesically complete, \textup{CAT}$(0)$-space $X$  which is $(P_0,r_0)$-packed,  and any   almost-cocompact discrete group of isometries $\Gamma$ of $X$, if $\Gamma$ is $\eta_0$-thin then $X$ can be decomposed as
	
	 \begin{equation}\label{UMin}
	 X=\bigcup_{g\in \Sigma_ {\lambda_0}} \textup{Min}(g)
	 \end{equation}
	 
\noindent  where  $\Sigma_{\lambda_0}$ is the subset of   $g \in \Gamma^*$   with minimal displacement $\ell(g)\leq \lambda_0$.
 
\end{theorem}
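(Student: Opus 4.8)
The plan is to prove the pointwise statement underlying \eqref{UMin}: taking $\eta_0:=\varepsilon_0(P_0,r_0)$ equal to the Margulis constant and $\lambda_0=\lambda_0(P_0,r_0)\ge\eta_0$ suitable, we claim that \emph{every} $x\in X$ lies in $\mathrm{Min}(g)$ for some $g\in\Gamma^{*}$ with $\ell(g)\le\lambda_0$, and then \eqref{UMin} is just the union of these inclusions over $x\in X$. So fix $x\in X$. Since $\Gamma$ is $\eta_0$-thin, $\mathrm{sys}(\Gamma,x)<\eta_0=\varepsilon_0$, hence $\Gamma_{\eta_0}(x)\neq\{1\}$; by the Margulis Lemma for packed spaces recalled in \S\ref{subsec-packed,margulis}, the subgroup $G_x:=\langle\Gamma_{\eta_0}(x)\rangle$ is virtually nilpotent, and in this setting one moreover controls the data quantitatively: $G_x$ has a nilpotent subgroup of index $\le N(P_0,r_0)$ whose torsion elements have order $\le N(P_0,r_0)$. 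It is this bounded order of torsion that will account for the gap between the displacement bound $\eta_0$ and the translation-length bound $\lambda_0$: an elliptic $h\in G_x$ of order $\le N$ with $d(x,hx)<\eta_0$ has $\mathrm{Min}(h)=\mathrm{Fix}(h)$ within distance $c(N)\,\eta_0$ of $x$, by a circumcenter estimate for the $\langle h\rangle$-orbit of $x$.

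The core of the proof is to produce inside $G_x$ a hyperbolic (axial) element $g$ with $\ell(g)\le\lambda_0$ and, crucially, with $x\in\mathrm{Min}(g)$. First one shows $G_x$ is infinite and contains a hyperbolic element: a finite $G_x$, or one all of whose elements and products of short elements are elliptic or parabolic, would be incompatible with $\Gamma$ being \emph{globally} $\eta_0$-thin \emph{and} almost-cocompact --- this is where almost-cocompactness genuinely enters, ruling out the parabolic behaviour that is otherwise compatible with $\mathrm{sys}(\Gamma,\cdot)$ being uniformly small (as for a parabolic $\mathbb Z<\mathrm{Isom}(\mathbb H^{2})$). Fix then a hyperbolic $g_0$ supplied at $x$ with small displacement. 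Its minimal set splits as $\mathrm{Min}(g_0)\cong Y\times\mathbb R$; since $d(\cdot,g_0\cdot)$ is convex, equals $\ell(g_0)$ on $\mathrm{Min}(g_0)$ and is $<\eta_0$ at $x$, the distance $d(x,\mathrm{Min}(g_0))$ is bounded in terms of $\eta_0$. The remaining --- and hardest --- point is to \emph{upgrade} this to $x\in\mathrm{Min}(g)$ for a possibly different short hyperbolic $g$: here one invokes geodesic completeness to prolong the ``short direction'' at $x$ (the thin hypothesis furnishes short almost-translations uniformly near $x$) to a complete geodesic line $L\ni x$; discreteness and packing force $L$ to stay within bounded distance of an axis, and convexity of the displacement function of that axial element then forces $L$ to be the axis itself, of some $g$ with $\ell(g)\le\lambda_0$, so that $x\in L\subseteq\mathrm{Min}(g)$. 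I expect this last step --- converting ``$x$ is near a minimal set'' into ``$x$ lies on an axis'' --- to be the main obstacle, requiring geodesic completeness, the packing condition and the bounded-index nilpotent structure of $G_x$ to be used in concert.

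Assembling: the element $g$ just produced lies in $\Sigma_{\lambda_0}$ and has $x\in\mathrm{Min}(g)$, so $X=\bigcup_{g\in\Sigma_{\lambda_0}}\mathrm{Min}(g)$, which is \eqref{UMin}. Alternatively one can present the conclusion topologically: the set $\bigcup_{g\in\Sigma_{\lambda_0}}\mathrm{Min}(g)$ is closed, because by packing the family of minimal sets of elements of $\Gamma$ with translation length $\le\lambda_0$ is locally finite, and it is open, by the uniform local splitting obtained in the core step; being nonempty (by thinness) and clopen in the connected space $X$, it is all of $X$.
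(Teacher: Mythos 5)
Your strategy attacks the statement pointwise --- fix $x$, invoke the Margulis Lemma to get the virtually nilpotent almost-stabilizer $G_x=\overline{\Gamma}_{\eta_0}(x)$, then produce a short $g$ with $x\in\mathrm{Min}(g)$ --- but the two steps on which everything rests do not hold. First, in the elliptic case you only obtain that $\mathrm{Fix}(h)$ is \emph{within bounded distance} of $x$, which is not $x\in\mathrm{Fix}(h)$; and your claim that $G_x$ must contain a hyperbolic element is unjustified and false in general (Example \ref{example-Lytchak} exhibits a thin, cocompact, discrete action where the decomposition consists entirely of fixed-point sets of elliptic elements, and the theorem deliberately allows $\ell(g)=0$). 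Second, and more seriously, the ``upgrade'' step fails: a complete geodesic line $L\ni x$ at bounded distance from an axis of $g_0$ is, by the flat strip theorem, merely \emph{parallel} to that axis, and parallel lines need not lie in $\mathrm{Min}(g_0)$ nor be axes of any other short element. Take $X=T\times\mathbb{R}$ and $g_0=(g_T,\tau)$ with $g_T$ elliptic, as in Example \ref{ex-stability-isometries}: then $\mathrm{Min}(g_0)=\mathrm{Fix}(g_T)\times\mathbb{R}$, yet every vertical line is parallel to an axis of $g_0$. Convexity of $d_{g_0}$ gives that $x$ is \emph{close} to $\mathrm{Min}(g_0)$, never that it lies inside it, and no combination of packing, discreteness and geodesic completeness converts ``near a minimal set'' into ``on a minimal set'' --- that conversion is precisely the content of the theorem, so the proposal assumes the hard part. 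The concluding ``clopen'' argument likewise rests on an unproved ``uniform local splitting''.

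The paper's proof is global and runs by contraposition: assuming $X\neq\mathrm{Min}_\lambda(\Gamma,X)$, it minimizes the continuous, $\Gamma$-invariant function $\Psi(x)=\sum_{g\in\overline{\Sigma}_\eta(x)\setminus\{\mathrm{id}\}}f(d(x,gx))$ over the complement (using almost-cocompactness to extract a minimizer), selects a minimizer maximal for the partial order $x\preceq y$ iff $\overline{\Sigma}_\eta(x)\subseteq\overline{\Sigma}_\eta(y)$ via Zorn's Lemma, and uses the Breuillard--Green--Tao structure of $\overline{\Gamma}_\eta(x)$ to construct a closed, convex, $\overline{\Gamma}_\eta(x)$-invariant set $Y$ not containing $x$: either $\bigcap\mathrm{Min}$ over a finite normal subgroup, or over the conjugates of a central iterated commutator $g_0$ of word length at most $(2I_0-1)(3\cdot 2^{S_0-1}-2)$ in the short generators --- which is where the relation $\lambda=(2I_0-1)(3\cdot 2^{S_0-1}-2)\,\eta$ between the two constants comes from, not from a bound on torsion orders as you suggest. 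Sliding $x$ along the prolonged geodesic from its projection on $Y$ then strictly decreases $\Psi$, a contradiction; hence $\min\Psi=0$ and some point has systole at least $\eta$, contradicting thinness. This variational mechanism, absent from your sketch, is exactly what forces a point of large systole to exist whenever the decomposition fails.
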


\noindent By {\em almost-cocompact} group, we mean  a group which acts cocompactly on the $\delta$-thick subsets of $X$. The same notion was introduce in \cite{BGS13}, §8.4 with the notation InjRad$(\Gamma\backslash X) \to 0$. In particular, cocompact groups or groups acting with quotient of finite measure are almost cocompact (see Section \S2 for the definition and the natural measure of CAT(0)-spaces under consideration).
\vspace{2mm}
 
\noindent The proof of Theorem \ref{maintheorem} is  different from that for manifolds given in \cite{CCR01}: actually,  the proof of Cao-Cheeger-Rong is based on the fact that hyperbolic elements {\em stabilize} (i.e., for any $s > 0$ and for any such $g$ there exists $n$ such that Min$(g^n)= $Min$(g^{nk})$ for all $0<k\leq s$), a property which does not hold in general for CAT(0)-spaces, as the following easy example shows.
\begin{ex}
	\label{ex-stability-isometries}
	Let $X$ be the product of the regular tree $T$ of valency $3$ and $\mathbb{R}$. Let $g_T$ be an elliptic isometry of $T$ with infinite order and such that $\text{Fix}(g_T^n) \neq \text{Fix}(g_T^k)$ for all $n,k\in \mathbb{N}$, $n\neq k$. Let $\tau$ be a translation of $\mathbb{R}$. The isometry $g=(g_T,\tau)$ of $X$ is hyperbolic but no power of $g$ stabilizes: indeed for every $n,k\in \mathbb{N}$, $n\neq k$ it holds $\text{Min}(g^{n}) \neq \text{Min}(g^k)$.
\end{ex}

A major difficulty for this is that, in this extended  metric setting,  there exist  different isometries which coincide on open sets. This problem is related to the existence of non-trivial isometry whose set of fixed points has non-empty interior. For many applications of Theorem \ref{maintheorem} we will exclude this case, restricting the attention to what we called \emph{slim groups} (we refer to Section \ref{subsec-isometries} for more details). By definition two isometries of a slim group coinciding on an open set must coincide globally, this gives a better control on the geometry of the group action. Every torsion-free group is trivially slim, as well as any discrete group acting on a CAT$(0)$-homology manifold (cp. Lemma \ref{lemma-class-manifolds}). Moreover, as opposite to \cite{CCR01}, we also have to deal with parabolic and elliptic isometries, as $\Gamma$ is allowed to have torsion.\\
 On the other hand,  our proof has  some similarities with a minimality argument used by Cao-Cheeger-Rong  (see their proof of Theorem 0.3 in   \cite{CCR01}), and  it is inspired by statements of Ballmann-Gromov-Schroeder (Lemma 7.11 and Section 13 of  \cite{BGS13}) holding for {\em analytic} manifolds (we stress that analiticity is crucial in \cite{BGS13} for their argument to work); similar arguments, which  use   analiticity or strictly negative curvature, can be found in  \cite{Gel11}, Section 2 and \cite{BGLS10}, Section 2. The main innovation in our proof relies in the choice of the good minimal point for the function $\Psi$ (cp. Step 3   in the proof of Theorem \ref{theo-alternative}), which allows us to overcome any analyticity or negative curvature assumption. 
\vspace{2mm}

\noindent In Theorem \ref{theo-alternative} we will see a stronger statement than Theorem \ref{maintheorem}: the same alternative holds for any $\lambda$ smaller than the  Margulis' constant $\varepsilon_0$ and an explicit $\eta=\eta(P_0,r_0, \lambda)$.
Here we want just to stress   that the equality (\ref{UMin}), in particular the fact the the union runs over minimum sets of isometries with minimal displacement less than the Margulis constant,  imposes a very strong condition on the topology of $X$. For instance, if   $X$ is a non-elementary Gromov hyperbolic space and $\Gamma$ is torsionless, then $X$ cannot satisfy \eqref{UMin}, and  the diastole is always universally bounded away from zero. This was already proved  in  \cite{BCGS}  in the cocompact case (see proof of Proposition 5.24), and in \cite{CavS20bis} (Corollary 1.4)  for non-cocompact actions. The following corollary yields a simple algebraic obstruction.
  \begin{corollary}
	\label{cor-ZxZ}
	Let $X$ be a complete, geodesically complete, \textup{CAT}$(0)$-space which is $P_0$-packed at scale $r_0$. Assume that $\Gamma$  is a discrete, slim, almost-cocompact group of isometries of $X$. If  $\textup{dias}(\Gamma , X) <\eta_0$, then 
		\begin{itemize}
			\item[(i)] either $\Gamma$ contains a subgroup isomorphic to $\mathbb{Z} \times \mathbb{Z}$
			\item[(ii)] or $X$ splits a non-trivial Euclidean factor.
		\end{itemize}
	If moreover $\Gamma$ is cocompact then (ii) can be replaced by 
	\begin{itemize}
		\item[(ii')] $X=\mathbb{R}$.	
	\end{itemize} 
	 \noindent (The constant $\eta_0$ here and in the following  is the same as  in Theorem  \ref{maintheorem}.)
\end{corollary}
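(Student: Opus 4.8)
The plan is to push the covering \eqref{UMin} through a Baire category argument. First note that $X$ is proper (a complete, geodesically complete, $(P_0,r_0)$-packed space is locally compact, hence proper by Hopf--Rinow), so it is separable, $\Gamma$ and therefore $\Sigma_{\lambda_0}$ are countable, and $X$ is a Baire space. In the decomposition $X=\bigcup_{g\in\Sigma_{\lambda_0}}\textup{Min}(g)$ the parabolic terms are empty, while for $g\neq 1$ elliptic one has $\textup{Min}(g)=\textup{Fix}(g)$, which has empty interior --- otherwise $g$ would coincide with the identity on a nonempty open set, contradicting slimness. Hence the union of the elliptic and parabolic terms is meagre, so the union of the closed sets $\textup{Min}(g)$ with $g$ hyperbolic and $\ell(g)\le\lambda_0$ is comeagre, and by Baire at least one of them has nonempty interior. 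Fix such a $g_0$ and put $Y_0=\textup{Min}(g_0)$: it is closed, convex, and isometric to $C_0\times\mathbb{R}$ with $g_0$ acting as $\textup{id}_{C_0}\times(t\mapsto t+\ell(g_0))$.

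If $Y_0=X$, then $X\cong C_0\times\mathbb{R}$ splits off a non-trivial Euclidean factor, which is alternative (ii). If in addition $\Gamma$ is cocompact, then either $C_0$ is a point and $X=\mathbb{R}$ (which is (ii$'$)), or $C_0$ is unbounded; in the latter case, writing the Euclidean de Rham factor of $X$ as $\mathbb{R}^k$ ($k\ge 1$) and combining the virtual splitting of cocompact CAT$(0)$-groups along the Euclidean factor with the fact that a geometric action on an unbounded complete CAT$(0)$-space always contains a hyperbolic element (an infinite-order element of such a group being semisimple, by properness, hence hyperbolic), one produces inside $\Gamma$ two commuting infinite-order isometries with transverse axes, hence a subgroup $\mathbb{Z}\times\mathbb{Z}$; so $\mathbb{Z}\times\mathbb{Z}\not\le\Gamma$ forces $X=\mathbb{R}$, as claimed.

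If instead $Y_0\subsetneq X$, then $Y_0$ is a proper closed set with nonempty interior, so $\partial Y_0\neq\emptyset$; fix $p\in\partial Y_0$, so no ball $B(p,\varepsilon)$ lies in $Y_0$. The idea is now to rerun the Baire argument on the complete set $A=\overline{B(p,\varepsilon)\setminus\textup{int}(Y_0)}$: since $B(p,\varepsilon)\setminus Y_0$ is dense in $A$, the set $Y_0\cap A=\textup{Min}(g_0)\cap A$ is nowhere dense in $A$, so discarding it together with the nowhere-dense elliptic and parabolic contributions yields a hyperbolic $g_1\in\Sigma_{\lambda_0}$, not a power of $g_0$, with $\textup{Min}(g_1)$ of nonempty interior near $p$. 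At this point the abelian local splitting structure of the full Theorem \ref{theo-alternative} (overlapping minimal sets force the corresponding isometries to commute) applies to $g_0$ and $g_1$: since $Y_0$ and $\textup{Min}(g_1)$ are both ``fat'' and abut along $\partial Y_0$, their intersection is a flat of dimension at least $2$ on which suitable powers of $g_0$ and $g_1$ act as translations in independent directions, and these powers generate a $\mathbb{Z}\times\mathbb{Z}\le\Gamma$. This is precisely the behaviour realised in Example \ref{example-Gromov-Buyalo}, where $\langle g_1,g_2\rangle\cong\mathbb{Z}\times\mathbb{Z}$ acts by independent translations on the flat $\textup{Min}(g_1)\cap\textup{Min}(g_2)$.

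The step I expect to be the main obstacle is this last one: arranging that $\textup{Min}(g_0)$ and $\textup{Min}(g_1)$ overlap along a flat of dimension $\ge 2$ and that $g_0,g_1$ admit commuting powers acting there with independent axes (so that one obtains $\mathbb{Z}\times\mathbb{Z}$ and not merely a cyclic or $\mathbb{Z}$-by-finite group). This is exactly where the content of Theorem \ref{theo-alternative} beyond the bare covering \eqref{UMin} --- the abelian local splitting --- is indispensable, and where slimness enters a second time, to guarantee that the relevant isometries restrict to honest translations on the flat. Checking that the Baire machinery transfers verbatim to closed subsets such as $A$, and that the induced action on the Euclidean factor in the cocompact refinement is again geometric, are routine technicalities by comparison.
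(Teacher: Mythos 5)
Your reduction to a hyperbolic $g_0$ with $\mathrm{Min}(g_0)$ of nonempty interior is fine (it is essentially Remark \ref{remark-union-interior}), and your treatment of the case $Y_0=X$ is broadly workable. But the case $Y_0\subsetneq X$ contains the genuine gap, and you have correctly located it yourself without closing it. Theorem \ref{theo-alternative} does \emph{not} provide an ``abelian local splitting structure'' in the sense you invoke: when $\mathrm{Min}(g_0)\cap\mathrm{Min}(g_1)\neq\emptyset$ the paper only records that $\langle g_0,g_1\rangle$ is virtually nilpotent (virtually abelian if $\Gamma$ is cocompact), and a virtually abelian group generated by two hyperbolic isometries with intersecting, distinct minimal sets can perfectly well be virtually cyclic --- the two isometries may have parallel axes and commensurable powers (Example \ref{ex-stability-isometries} shows distinct minimal sets even within a single cyclic group). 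So there is no flat of dimension $\ge 2$ and no $\mathbb{Z}\times\mathbb{Z}$ to be extracted from the mere overlap of two ``fat'' minimal sets. Worse, your dichotomy ``$Y_0=X\Rightarrow$ (ii), $Y_0\subsetneq X\Rightarrow$ (i)'' is not the right one: alternative (ii) typically occurs with every single $\mathrm{Min}(g)$ a proper subset (e.g.\ $X=T\times\mathbb{R}$ with $T$ a tree, where $\mathrm{Min}(g)=(\text{axis})\times\mathbb{R}\subsetneq X$ for each relevant $g$), so the $Y_0\subsetneq X$ branch must be allowed to terminate in conclusion (ii), which your argument never produces.

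The paper closes exactly this gap with two tools you are missing. First, Proposition \ref{prop-structure-virt-nilpotent} gives each almost-stabilizer $\overline{\Gamma}_{\lambda_0}(x)$ a finite-index subgroup $\Gamma_{W_x}\times\mathbb{Z}^{k_x}$ with $k_x\ge 1$; if some $k_x\ge 2$, or some $\Gamma_{W_x}$ contains a parabolic (hence infinite-order) element, one reads off $\mathbb{Z}\times\mathbb{Z}$ directly, and otherwise \emph{every} $\overline{\Gamma}_{\lambda_0}(x)$ is virtually cyclic. Second, in that residual case Lemma \ref{lemma-splitting-cyclic} shows, by chaining overlapping minimal sets along geodesics and using minimality of the action (Proposition \ref{prop-minimal}), that all axes of elements of $\Lambda$ are globally parallel, whence $X$ splits as $Y\times\mathbb{R}$ --- this is how (ii) is actually obtained. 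Your cocompact refinement (Swenson to get an infinite-order, hence hyperbolic, element in the $Y$-factor unless $Y$ is a point) matches the paper's, but it must be run after the correct splitting, not after the ad hoc one. As written, the proposal does not constitute a proof.
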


 The following theorem generalizes the lower bounds  for dias$(\Gamma, X)$ of  \cite{BCGS} and \cite{CavS20bis}, and completely determines the structure of packed (complete, geodesically complete) CAT(0)-spaces with diastole small enough in some remarkable cases: 

  \begin{theorem}[Extract from Proposition  \ref{prop-dimension-1}  and  Corollaries \ref{corollary-CAT-epsilon} \& \ref{cor-visibility}] ${}$
\label{teor-dias}
\noindent  Let $X$ be a complete, geodesically complete, \textup{CAT}$(0)$-space which is $P_0$-packed at scale $r_0$.  Assume moreover that one of the following  holds: 	
\begin{itemize}
		\item[(i)]   $X$ has a point of dimension $1$,
		  \item[(ii)] or $X$ has a point with an open neighbourhood  which is \textup{CAT}$(-\varepsilon)$ for some $\varepsilon > 0$,
		    \item[(iii)] or $X$ is a visibility space.	  
	\end{itemize}
If $X$ has a discrete, slim,  almost-cocompact group of isometries   $\Gamma$ such that
 $\textup{dias}(\Gamma , X) <\eta_0$, then $X$ is isometric to $ \mathbb{R}$.
\end{theorem}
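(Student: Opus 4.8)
The plan is to derive all three cases from Theorem \ref{maintheorem} (equivalently from the full alternative in Theorem \ref{theo-alternative} and Corollary \ref{cor-ZxZ}), by showing that each of the hypotheses (i), (ii), (iii) forces the decomposition \eqref{UMin} to collapse to $X = \mathbb{R}$. The common first step is: assume $\textup{dias}(\Gamma, X) < \eta_0$; then by Theorem \ref{maintheorem} we have $X = \bigcup_{g \in \Sigma_{\lambda_0}} \textup{Min}(g)$, and every $g \in \Sigma_{\lambda_0}$ has minimal displacement $\ell(g) \le \lambda_0 < \varepsilon_0$. Since $\Gamma$ is slim, isometries coinciding on an open set coincide, which rigidifies the way the minimal sets overlap. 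The key structural fact I would extract (it should be essentially the abelian local splitting of Cao--Cheeger--Rong, transported to this setting) is that each $\textup{Min}(g)$ with $g$ hyperbolic splits isometrically as $Y_g \times \mathbb{R}$ with $g$ acting as $\mathrm{id} \times (\text{translation})$, and that $X$ is covered by such slabs; moreover by Corollary \ref{cor-ZxZ}, since $\textup{dias}(\Gamma,X) < \eta_0$, either $\Gamma \supset \mathbb{Z}\times\mathbb{Z}$ or $X$ splits off a Euclidean factor.

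For case (i): if $X$ has a point $x$ of dimension $1$, then no $\mathbb{R}^2$ can embed isometrically through a neighbourhood of $x$; in particular $\Gamma$ cannot contain a $\mathbb{Z}\times\mathbb{Z}$ acting with a $2$-dimensional minimal set (a flat plane), which is the typical shape of the $\mathbb{Z}\times\mathbb{Z}$ produced in Corollary \ref{cor-ZxZ}(i) — here I would invoke Proposition \ref{prop-dimension-1}, which presumably says that a packed geodesically complete CAT(0)-space with a $1$-dimensional point and this kind of group action must be $\mathbb{R}$. So alternative (i) of Corollary \ref{cor-ZxZ} is excluded and we land in (ii) of that corollary: $X$ splits a nontrivial Euclidean factor, $X = X' \times \mathbb{R}^k$, $k \ge 1$. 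But a Euclidean factor of dimension $\ge 2$ again contradicts the existence of a $1$-dimensional point, so $k = 1$ and $X = X' \times \mathbb{R}$; geodesic completeness and packing pass to $X'$, and iterating (or a direct dimension count, since the $1$-dimensional point forces $X'$ to be $0$-dimensional, hence a point by geodesic completeness and connectedness) gives $X = \mathbb{R}$.

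For case (ii): if some open set $U \ni x$ is CAT$(-\varepsilon)$, then $U$ contains no flat strip, so no hyperbolic $g$ can have $\textup{Min}(g) \cap U$ of positive "transverse" dimension; since $U \subset \bigcup_{g} \textup{Min}(g)$ and $U$ is open, and the slim hypothesis forbids the degeneracies coming from isometries with fat fixed-point sets, one concludes that through $U$ all the minimal sets are $1$-dimensional, i.e. geodesic lines; by geodesic completeness this propagates, forcing $X$ to be $1$-dimensional and then $X = \mathbb{R}$ as in case (i). This is exactly the content of Corollary \ref{corollary-CAT-epsilon}, which I would cite. Case (iii) is similar: in a visibility space there are no flat strips or flat half-planes at all (flats are an obstruction to visibility), so again the decomposition \eqref{UMin} can only consist of lines, hence $X = \mathbb{R}$ — this is Corollary \ref{cor-visibility}. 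In all three cases, once $X = \mathbb{R}$ is reached one should check consistency: $\mathbb{R}$ is indeed packed, geodesically complete, CAT$(0)$, $1$-dimensional, CAT$(-\varepsilon)$ locally (vacuously, being a line) and a visibility space, and admits slim almost-cocompact groups with arbitrarily small diastole (translations), so the conclusion is sharp.

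The main obstacle is the structural step bridging Theorem \ref{maintheorem} and the "no flats" hypotheses: Theorem \ref{maintheorem} gives a covering by minimal sets but not, a priori, that these minimal sets are \emph{low-dimensional} or that their overlaps behave well — one needs to use the bound $\ell(g) \le \lambda_0$ together with slimness and the packing condition to rule out, through the relevant open set, the appearance of a $2$-dimensional flat (a flat strip or plane inside some $\textup{Min}(g) = Y_g \times \mathbb{R}$, or at an intersection where two commuting hyperbolic elements generate a $\mathbb{Z}^2$ flat). Concretely the danger is a hyperbolic $g$ whose axis passes through $U$ but whose $\textup{Min}(g)$ is genuinely $\ge 2$-dimensional near $U$; excluding this is where the curvature/visibility/dimension hypothesis does the real work, and where I expect to lean most heavily on Proposition \ref{prop-dimension-1} and Corollaries \ref{corollary-CAT-epsilon} and \ref{cor-visibility} as black boxes rather than reproving them.
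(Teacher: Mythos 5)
Your proposal is, at the formal level, circular: the statement is explicitly an \emph{extract} of Proposition \ref{prop-dimension-1} and Corollaries \ref{corollary-CAT-epsilon} and \ref{cor-visibility}, so ``invoking them as black boxes'' is not a proof of the statement but a restatement of it. What matters is therefore the content of your sketches, and these have a genuine gap at the local-to-global step. In all three cases you correctly reduce to the situation where the relevant minimal sets are geodesic lines (equivalently, $D(g)$ is a point for the isometries $g$ whose minimal set meets the distinguished point or open set), but you then pass from ``\,$\textup{Min}(g)\cong\mathbb{R}$ for some/all such $g$\,'' to ``\,$X=\mathbb{R}$\,'' by saying that this ``propagates by geodesic completeness'' or by ``iterating the splitting''. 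This is precisely the hard step, and it is false without a quantitative input: a union of lines can cover a space much larger than a line. The paper closes it with a packing argument: if $x'$ lies at distance $r_0$ from the line $\textup{Min}(g)$, then the orbit points $g^nx'$ are pairwise $2r_0$-separated (via an analysis of directions at a $1$-dimensional point of the axis), while $d(g^nx',y)\leq r_0+|n|\,\ell(g)\leq r_0+|n|\cdot\frac{4r_0}{P_0+1}$ by the choice \eqref{eq-defin-lambda0} of $\lambda_0$; this puts $P_0+1$ points that are $2r_0$-separated inside $\overline{B}(y,3r_0)$, contradicting the packing. Nothing in your sketch plays the role of this estimate, and this is where the hypothesis $\ell(g)\leq\lambda_0$ (not merely $\ell(g)\leq\varepsilon_0$) is actually used.

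A second, more localized problem is your treatment of case (i) via Corollary \ref{cor-ZxZ}. To land in alternative (ii) of that corollary you must exclude alternative (i), and your argument for this is that a point of dimension $1$ forbids an isometrically embedded $\mathbb{R}^2$ ``through a neighbourhood of $x$''. But the flat produced by the flat torus theorem from a $\mathbb{Z}\times\mathbb{Z}\leq\Gamma$ lives somewhere in $X$, not necessarily near the $1$-dimensional point; geodesically complete packed CAT$(0)$-spaces need not be pure dimensional (the paper itself points this out), so the presence of a $1$-dimensional point does not by itself rule out a flat plane elsewhere. The paper avoids this entirely: Proposition \ref{prop-dimension-1} never passes through Corollary \ref{cor-ZxZ}, it argues directly from the decomposition $X=\bigcup_{g\in\Lambda}\textup{Min}(g)$ and the packing bound above. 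Your sketch of case (ii) is closer to the paper's Corollary \ref{corollary-CAT-epsilon} (which uses the flat quadrangle theorem to produce a Euclidean rectangle inside $U$ when $D(g)$ is not a point, and otherwise falls back on Proposition \ref{prop-dimension-1}), and case (iii) is indeed proved in the paper via Corollary \ref{cor-ZxZ} plus the absence of flat strips in visibility spaces; but both of these reductions ultimately rest on the packing argument of Proposition \ref{prop-dimension-1} or on the splitting Lemma \ref{lemma-splitting-cyclic}, neither of which your proposal supplies.
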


\vspace{-2mm}
\noindent (See Section \ref{subsec-cat(0)} for the notion of {\em points of dimension $k$}  and generalites on the dimension for CAT(0)-spaces).

\vspace{2mm}
The following is another  structural consequence of a small diastole for CAT(0)-spaces in low dimension.

\begin{theorem}
 	\label{teor-dimension-2}
	Let $X$ be a complete, geodesically complete  \textup{CAT}$(0)$-space  of dimension at most $2$, which is $P_0$-packed at scale $r_0$, and let  $\Gamma$ be a discrete, slim, almost-cocompact group of isometries of $X$. If    $\textup{dias}(\Gamma , X) < \eta_0$  then   $X$ is isometric to $T \times \mathbb{R}$, where $T$ is a tree (possibily a point). If moreover $\Gamma$ is cocompact then it virtually splits as $\Gamma_T \times \mathbb{Z}$, where $\Gamma_T$ (resp. $\mathbb{Z}$) acts discretely and cocompactly on $T$ (resp. $\mathbb{R}$).
\end{theorem}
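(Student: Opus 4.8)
The plan is to reduce to dimension exactly $2$ and then feed the decomposition of Theorem~\ref{maintheorem} into the local structure of low-dimensional CAT$(0)$-spaces. First note that $X$ cannot be a point (then $\Gamma=\{1\}$ and $\mathrm{dias}(\Gamma,X)=+\infty$), and if $\dim X\le 1$ then $X$, being a complete and geodesically complete $1$-dimensional CAT$(0)$-space, is a real tree without endpoints, so it has points of dimension $1$ and Theorem~\ref{teor-dias}(i) yields $X\cong\mathbb{R}=\{pt\}\times\mathbb{R}$ (with the cocompact addendum being the classical fact that a cocompact group of $\mathbb{R}$ is virtually $\mathbb{Z}$). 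Hence we may assume $\dim X=2$; applying Theorem~\ref{teor-dias}(i) once more, $X$ has no point of dimension $1$, so $X$ has pure dimension $2$ (every nonempty open subset is $2$-dimensional and every space of directions $\Sigma_xX$ is a $1$-dimensional CAT$(1)$-space in which every direction has an antipode).

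Since $\Gamma$ is $\eta_0$-thin, Theorem~\ref{maintheorem} gives $X=\bigcup_{g\in\Sigma_{\lambda_0}}\mathrm{Min}(g)$, where $\lambda_0<\varepsilon_0$; this is a countable union because $\Gamma$ is discrete. Parabolic elements contribute nothing since $\mathrm{Min}(g)=\emptyset$; if $g$ is elliptic and nontrivial then $\mathrm{Min}(g)=\mathrm{Fix}(g)$ has empty interior, for otherwise $g$ would coincide with the identity on a nonempty open set and so be trivial by slimness, whence $\dim\mathrm{Fix}(g)\le 1$. Applying the countable sum theorem for covering dimension to the closed sets $\mathrm{Min}(g)$ and using $\dim X=2$, some member of the union is $2$-dimensional; it must be the $\mathrm{Min}$-set of a hyperbolic $g_0\in\Sigma_{\lambda_0}$, so $\mathrm{Min}(g_0)=Y_0\times\mathbb{R}$ with $\dim Y_0=1$ — hence $Y_0$ a real tree — and $g_0$ acts as $\mathrm{id}_{Y_0}$ times a translation. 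The crux is the claim that $\mathrm{Min}(g_0)=X$: granting it, $X\cong Y_0\times\mathbb{R}$ with $Y_0$ necessarily geodesically complete (as $X$ and $\mathbb{R}$ are), which is the first assertion with $T=Y_0$; the borderline $Y_0\cong\mathbb{R}$, i.e. $X\cong\mathbb{R}^2$, is consistent ($T=\mathbb{R}$).

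To prove $\mathrm{Min}(g_0)=X$ it suffices, as $\mathrm{Min}(g_0)$ is closed and $X$ connected, to show it is open. Suppose not and pick $q$ in its topological boundary. Then $\Sigma_q\mathrm{Min}(g_0)$ is the spherical suspension of the nonempty discrete set $\Sigma_{y_0}Y_0$, its poles $e^{\pm}$ being the two directions (at angular distance $\pi$) of the axis of $g_0$ through $q$, and $\Sigma_qX$ strictly contains it; moreover any direction $u$ normal to $\mathrm{Min}(g_0)$ at $q$ satisfies $\angle_q(u,w)\ge\pi/2$ for all $w\in\Sigma_q\mathrm{Min}(g_0)$, since nearest-point projection onto $\mathrm{Min}(g_0)$ is $1$-Lipschitz. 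One derives a contradiction by combining: that $\Sigma_qX$ is a $1$-dimensional CAT$(1)$-graph with antipodes; that $\lambda_0<\varepsilon_0$, so any two hyperbolic elements of displacement $\le\lambda_0$ with axes through $q$ lie in the virtually nilpotent almost-stabilizer $\Gamma_{\varepsilon_0}(q)$ and therefore share a common flat; and that $\Gamma$ is almost-cocompact, which rules out bounded orbits and lets one argue near a point of the thick part — so that following the direction $u$ by geodesic completeness into the complement of $\mathrm{Min}(g_0)$, the $\mathrm{Min}$-covering of $X$ would impose there a second product direction incompatible with $e^{\pm}$. Thus $\mathrm{Min}(g_0)$ is open and equals $X$. (The flat part of this analysis can be shortcut through Corollary~\ref{cor-ZxZ}: if $X$ splits a nontrivial Euclidean factor $\mathbb{R}^k$ then $k\in\{1,2\}$ by the dimension bound, and either $X=\mathbb{R}\times\mathbb{R}$ or $X=X'\times\mathbb{R}$ with $X'$ a real tree.)

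For the cocompact addendum, let $X=T\times\mathbb{R}$: if $T$ is a point this is the case $X=\mathbb{R}$ already treated; if $T\cong\mathbb{R}$ then $X=\mathbb{R}^2$ and a cocompact group is virtually $\mathbb{Z}\times\mathbb{Z}$ with each factor acting cocompactly on a line; otherwise $T$ has no Euclidean factor, so $\mathrm{Isom}(T\times\mathbb{R})=\mathrm{Isom}(T)\times\mathrm{Isom}(\mathbb{R})$, and intersecting $\Gamma$ with the two factors and using cocompactness in each gives a finite-index subgroup $\Gamma_T\times\mathbb{Z}\le\Gamma$ with $\Gamma_T$ and $\mathbb{Z}$ acting discretely and cocompactly on $T$ and on $\mathbb{R}$ respectively. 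The main obstacle is the claim $\mathrm{Min}(g_0)=X$: excluding a topological-boundary point of the $2$-dimensional convex set $\mathrm{Min}(g_0)$ is genuinely delicate, because in a geodesically complete CAT$(0)$-space a full-dimensional closed convex subset need not be the whole space (two of the three half-planes of a \textquotedblleft book\textquotedblright\ form such a subset), so the argument must exploit simultaneously the product splitting of $\mathrm{Min}(g_0)$, the pure $2$-dimensionality and geodesic completeness of $X$, the sub-Margulis bound on $\lambda_0$, and almost-cocompactness.
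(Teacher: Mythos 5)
The reduction to pure dimension $2$ and the observation that the decomposition $X=\bigcup_{g}\mathrm{Min}(g)$ must contain a $2$-dimensional piece $\mathrm{Min}(g_0)=Y_0\times\mathbb{R}$ are fine (the paper gets the same starting point more directly from Remark \ref{remark-union-interior}: by slimness one may restrict the union to hyperbolic $g$ with $\mathrm{Min}(g)$ of non-empty interior). The fatal problem is your crux claim $\mathrm{Min}(g_0)=X$: it is false, and the counterexample is the target space itself. Take $X=T\times\mathbb{R}$ with $T$ a trivalent simplicial tree and $\Gamma=\Gamma_T\times\langle\tau_\varepsilon\rangle$; the element $g_0=(\gamma,\tau_\varepsilon)$ with $\gamma$ hyperbolic on $T$ lies in $\Sigma_{\lambda_0}$ for small $\varepsilon$ and has $\mathrm{Min}(g_0)=(\mathrm{axis}(\gamma))\times\mathbb{R}\cong\mathbb{R}^2$, a closed, convex, full-dimensional, \emph{proper} subset that is not open (its boundary is the set of branch points of the axis times $\mathbb{R}$). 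Since the countable sum theorem only hands you \emph{some} $2$-dimensional member of the union, you may well have picked exactly such a $g_0$, so no contradiction can be derived at a boundary point $q$ of $\mathrm{Min}(g_0)$ --- the configuration you are trying to exclude genuinely occurs. This is why your ``openness'' argument remains a wish list of ingredients rather than a proof, and why concluding $X\cong Y_0\times\mathbb{R}$ from a single minimal set cannot work (it would force $X=\mathbb{R}^2$ in the example above).

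What the paper does instead is extract only the $\mathbb{R}$-\emph{direction} from the minimal sets, not a whole minimal set. Lemma \ref{lemma-bieberbach-dim-2} gives a dichotomy for two elements $g,h\in\Lambda$ with $\mathrm{Min}(g)\cap\mathrm{Min}(h)\neq\emptyset$: either $\langle g,h\rangle$ is virtually cyclic, or $\langle g^{J_0},h^{J_0}\rangle\cong\mathbb{Z}^2$; the second case is killed by a separate packing argument showing $X=\mathbb{R}^2$. In the first case, chains of intersecting minimal sets show that \emph{all} elements of $\Lambda$ have pairwise parallel axes (Lemma \ref{lemma-splitting-cyclic}); then the set $P_c$ of all lines parallel to one such axis $c$ is closed, convex and $\Gamma$-invariant, and \emph{minimality of the action} (Proposition \ref{prop-minimal}, which is precisely where slimness and almost-cocompactness are used) forces $P_c=X$, which splits as $T\times\mathbb{R}$. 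Your cocompact addendum has the same flavour of gap: a discrete cocompact subgroup of $\mathrm{Isom}(T)\times\mathrm{Isom}(\mathbb{R})$ need not virtually split (irreducible lattices in products of trees exist); the paper instead produces a hyperbolic $\gamma$ with $\mathrm{Min}(\gamma)\supseteq\overline{B}(x_0,2R)$, shows $\langle\gamma\rangle$ is central using that two isometries agreeing on an open set agree everywhere, and only then invokes Theorem II.7.1 of \cite{BH09}. You should replace your Step ``$\mathrm{Min}(g_0)=X$'' by the parallelism-plus-minimality argument.
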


\noindent This splitting result is false in higher dimensions as showed by Example \ref{example-Gromov-Buyalo}.

\begin{obs}{\em
	This last theorem enlightens two interesting phenomena.  \linebreak
	In dimension $\leq 2$, if the diastole is smaller than the universal constant $\eta_0$ then:
	\begin{itemize}
		\item[(a)] the space is {\em pure dimensional}, that is the dimension of every point is the same;
		\item[(b)] the space is {\em collapsible}, in the following sense: one can define new \textup{CAT}$(0)$ metrics $d_n$ on $X$ in such a way that $(X,d_n)$ is still uniformily packed at scale $r_0$ and  $\Gamma$ still acts by isometries with respect to $d_n$, but   the diastole of $\Gamma$ with respect to the metric $d_n$ is smaller than $\frac{1}{n}$. \\
		Therefore,  if the dias$(\Gamma, X)$ is sufficiently small,  then it can be taken arbitrarily small by changing the metric of $X$ remaining in the class of uniformly packed CAT$(0)$ metrics.
			\end{itemize}

\noindent 	The result of (a) is not true in higher dimensions: it is enough to consider the product $X=Y\times \mathbb{R}$, where $Y$ is any non-pure dimensional, complete, geodesically complete, packed \textup{CAT}$(0)$-space  admitting a cocompact group of isometries $\Gamma_Y$, and  taking  $\Gamma=\Gamma_Y \times \langle \tau_\varepsilon \rangle$,
where $\tau_\varepsilon$ is a translation by  $\varepsilon$.\\
	On the other hand, we have no clue if (b) is true in higher dimensions.
}
\end{obs}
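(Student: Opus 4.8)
\noindent The plan is to read off all three assertions from the splitting furnished by Theorem~\ref{teor-dimension-2}, namely $X\cong T\times\mathbb{R}$ with $T$ a geodesically complete tree (possibly a point), together with the virtual product splitting $\Gamma'\cong\Gamma_T\times\mathbb{Z}$ available when $\Gamma$ is cocompact.

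\smallskip
\noindent\textbf{(a) Pure dimensionality.} First I would note that geodesic completeness passes to the factors, so that $T$ is a geodesically complete tree. Such a $T$, if not a point, is pure $1$-dimensional: every point is non-isolated and its space of directions is a non-empty discrete, hence $0$-dimensional, set, so the dimension recalled in Section~\ref{subsec-cat(0)} equals $1$ at each point; if $T$ is a point it is pure $0$-dimensional. Since the space of directions of a product is the spherical join of those of the factors, the dimension is additive under products, and therefore every point of $X=T\times\mathbb{R}$ has dimension $\dim T+1$, a value in $\{1,2\}$ independent of the point. Hence $X$ is pure dimensional.

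\smallskip
\noindent\textbf{(b) Collapsibility.} The idea is to rescale only the Euclidean factor, so as not to move the packing scale. If $X$ is flat, i.e.\ $T$ is a point or a line and $X=\mathbb{R}$ or $\mathbb{R}^2$, I would simply rescale the whole metric: since Euclidean spaces are $(P_0,r_0)$-packed at \emph{every} scale with the same constant, $(X,\varepsilon d)$ is again $(P_0,r_0)$-packed and CAT$(0)$, the full group $\Gamma$ still acts by isometries, and the diastole gets multiplied by $\varepsilon$. If $T$ is genuinely branching, then $\mathbb{R}$ is the maximal Euclidean (de Rham) factor of $X$ and is preserved by every isometry, so $\mathrm{Isom}(X)=\mathrm{Isom}(T)\times\mathrm{Isom}(\mathbb{R})$; here I would keep $d_T$ and replace the metric of the $\mathbb{R}$-factor by $\varepsilon$ times the standard one, obtaining a metric $d_\varepsilon$ for which $(X,d_\varepsilon)=T\times(\varepsilon\,\mathbb{R})$ is \emph{isometric} to $X$ and hence still CAT$(0)$ and $(P_0,r_0)$-packed with the same constants, while each $\gamma=(\gamma_T,\gamma_\mathbb{R})\in\Gamma$ remains an isometry because $\gamma_\mathbb{R}$ is still an isometry of $\varepsilon\,\mathbb{R}$. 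To make the diastole small I would use a pure translation $\gamma_0=(\mathrm{id}_T,\tau)\in\Gamma$ of the $\mathbb{R}$-direction — a generator of the $\mathbb{Z}$-factor of $\Gamma'\cong\Gamma_T\times\mathbb{Z}$ in the cocompact case: then $d_\varepsilon(x,\gamma_0 x)=\varepsilon\,\ell(\gamma_0)$ for all $x$, so $\mathrm{dias}(\Gamma,(X,d_\varepsilon))\le\varepsilon\,\ell(\gamma_0)$. In either case, setting $d_n:=d_{\varepsilon_n}$ for $\varepsilon_n$ small enough gives diastole $<1/n$.

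\smallskip
\noindent\textbf{Failure of (a) in higher dimension.} Let $Y$ be a non-pure-dimensional, complete, geodesically complete, packed CAT$(0)$-space with a slim cocompact group $\Gamma_Y$; such spaces exist (for instance among geodesically complete $M_\kappa$-complexes of bounded geometry, taking $\Gamma_Y$ torsion-free, hence slim). I would then set $X=Y\times\mathbb{R}$ and $\Gamma=\Gamma_Y\times\langle\tau_\varepsilon\rangle$ with $\tau_\varepsilon$ the translation of $\mathbb{R}$ by $\varepsilon$. Products of packed spaces are packed, so $X$ is complete, geodesically complete, CAT$(0)$ and $(P_0,r_0)$-packed for a suitable $P_0$; $\Gamma$ is discrete, slim and cocompact; and since $\tau_\varepsilon$ moves every point by exactly $\varepsilon$ one has $\mathrm{dias}(\Gamma,X)\le\varepsilon<\eta_0(P_0,r_0)$ once $\varepsilon$ is small. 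By additivity of the dimension, a point of $X$ over $y\in Y$ has dimension $\dim_y Y+1$, so $X$ inherits the non-pure-dimensionality of $Y$. This produces, in every dimension $\ge 3$, a slim cocompact action of arbitrarily small diastole on a non-pure-dimensional space, as claimed.

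\smallskip
\noindent The main obstacle I foresee lies in part (b) for a merely almost-cocompact $\Gamma$, where the virtual splitting is not available: one still shows that the projection $\Gamma\to\mathrm{Isom}(\mathbb{R})$ is unbounded — otherwise $\Gamma$ would preserve a slice $T\times\{s_0\}$ and could not act cocompactly on the thick part in the $\mathbb{R}$-direction — and hence contains a non-trivial translation, but upgrading it to a \emph{pure} one (trivial on $T$) is the delicate step, which I expect to require slimness together with the covering $X=\bigcup_{g\in\Sigma_{\lambda_0}}\mathrm{Min}(g)$ of Theorem~\ref{maintheorem}. The remaining, more routine care is simply to collapse while keeping the packing scale $r_0$ fixed, which is exactly why the rescaling must involve only the Euclidean factor.
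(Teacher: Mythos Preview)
The paper states this as a remark without a separate proof, regarding (a) and (b) as immediate consequences of the splitting $X\cong T\times\mathbb{R}$ in Theorem~\ref{teor-dimension-2}; so there is no paper-side argument to compare against. Your justifications of (a) and of the higher-dimensional counterexample are correct and are exactly the intended reading.

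For (b) your construction is right, but you are making the almost-cocompact case harder than it is. You do not need a \emph{pure} translation in $\Gamma$: the covering $X=\bigcup_{g\in\Lambda}\mathrm{Min}(g)$ already suffices, used pointwise. When $T$ is branching, the splitting $X=T\times\mathbb{R}$ in the proof of Proposition~\ref{prop-dimension-2} is produced via Lemma~\ref{lemma-splitting-cyclic}, so the $\mathbb{R}$-factor is, by construction, the common direction of the axes of all $g\in\Lambda$. Hence for every $x=(t,s)$ there exists $g\in\Lambda$ with $x\in\mathrm{Min}(g)=D(g)\times\mathbb{R}$; since $g$ acts as the identity on $D(g)\subseteq T$, one has $gx=(t,\,s\pm\ell(g))$ and therefore $d_\varepsilon(x,gx)=\varepsilon\,\ell(g)\le\varepsilon\,\lambda_0$. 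This gives $\mathrm{dias}(\Gamma,(X,d_\varepsilon))\le\varepsilon\,\lambda_0$ directly, with no cocompactness and no need to upgrade anything to a central or pure element. The flat cases $X=\mathbb{R}$ and $X=\mathbb{R}^2$ are handled by your global rescaling exactly as you wrote.
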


We remark that Corollary \ref{cor-ZxZ}, Theorem \ref{teor-dias} and Theorem \ref{teor-dimension-2} are false if the group is not slim, as the following example due to A.Lytchak shows.
\begin{ex}
	\label{example-Lytchak}
	Let $X$ be the Cayley graph of the free group on two generators $F(a,b)$. Let $e$ be the identity point on this graph and $\tilde{a}, \tilde{b}$ the axes of the isometries $a$ and $b$. Let $\sigma_a,\sigma_b$ be the reflections with respect to $\tilde{a}$ and $\tilde{b}$. Finally let $\Gamma$ be the group of isometries of $X$ generated by $\lbrace a,b,\sigma_a,\sigma_b\rbrace$. It is clear that every point of $X$ is stabilized by some element of $\Gamma$, so dias$(\Gamma,X) = 0$. The only non-trivial fact is to show that $\Gamma$ is discrete, which is equivalent to say that the stabilizer of $e$ is finite. Let $\mathbb{S}^1 \ast \mathbb{S}^1$ be the quotient of $X$ with respect to the action of $F(a,b)$. Every element of $\Gamma$ defines an isometry of $\mathbb{S}^1 \ast \mathbb{S}^1$ fixing the gluing point. By the standard cover theory each isometry of $\mathbb{S}^1 \ast \mathbb{S}^1$ defines a unique isometry $\tilde{f}$ of $X$ such that $\tilde{f}(e) = e$. So there exists an injective map from Stab$_\Gamma(e)$ to Isom$(\mathbb{S}^1\ast \mathbb{S}^1)$. The latter group is finite, so also Stab$_\Gamma(e)$ is finite.
\end{ex}
In this case the decomposition given by Theorem \ref{maintheorem} contains only elliptic elements and this is why the slim assumption plays a role. The next example shows instead that the decomposition given by Theorem \ref{maintheorem} is much stronger than a decomposition as union of minimal sets of \emph{some} isometries.

\begin{ex}
	Let $Y$ be a flat torus with a loop of length $\ell$ glued at some point. Let $X$ be its universal cover, which is a geodesically complete, CAT$(0)$ space which is $P_0$-packed at scale $r_0$ for some $P_0,r_0$. It appears as a tree of $\mathbb{R}^2$. Let $\Gamma$ be the fundamental group of $Y$. It is clear that $X$ can be written as the union of the minimal sets of the non-trivial isometries of $\Gamma$, which is slim because it is torsion-free. However the conclusions of Theorem \ref{teor-dias} do not hold for $X$, showing that it cannot be decomposed as in Theorem \ref{maintheorem}. 
\end{ex}

 We would also bring to the attention of the reader a general observation. When $\Gamma$ is a discrete, torsionless group of isometries of our CAT(0)-space  $X$, then the quotient space $M=\Gamma \backslash X$ is a locally CAT$(0)$-space which is endowed with a natural measure $\mu_{M}$ (defined, locally, as explained in Section \ref{subsec-cat(0)}). If dias$(\Gamma,X) \geq \eta_0$,  then there is a ball of $M$ of radius $\frac{\eta_0}{2}$ which is isometric to a ball of $X$ and in particular (using the volume estimates recalled in Section \ref{subsec-packed,margulis}) we get $$\mu_{M}(M) \geq v_0 = v_0(P_0,r_0) > 0.$$
 Therefore, we can read all the above results   at level of quotient spaces as a ``small volume versus rigidity'' alternative. By the way of example,  we just express Theorem  \ref{teor-dias}(ii)  in this terms.
\begin{corollary}[Thm. \ref{teor-dias}(ii), revisited]
	Let $M$ be a compact, locally geodesically complete, locally \textup{CAT}$(0)$-space whose universal cover is $P_0$-packed at scale $r_0$. Suppose there exists an (arbitrarily small) open set $U\subset M$ which is \textup{CAT}$(-\varepsilon)$ for some $\varepsilon > 0$. If $\mu_{M}(M) < v_0$ then $M$, up to renormalization, is isometric to $\mathbb{S}^1$.
\end{corollary}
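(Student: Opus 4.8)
The plan is to reduce this statement to Theorem~\ref{teor-dias}(ii) applied to the universal cover of $M$. First I would set $X=\widetilde M$ and let $\Gamma=\pi_1(M)$ act on $X$ by deck transformations. Since $M$ is compact it is complete, hence $X$ is a complete length space; being the universal cover of a locally $\mathrm{CAT}(0)$, locally geodesically complete space, $X$ is a complete, geodesically complete $\mathrm{CAT}(0)$-space (Cartan--Hadamard), and by hypothesis it is $P_0$-packed at scale $r_0$. The action of $\Gamma$ is by deck transformations, hence free and properly discontinuous, and it is cocompact because $M$ is compact; in particular $\Gamma$ is discrete and almost-cocompact. Moreover a finite group of isometries of a complete $\mathrm{CAT}(0)$-space has a fixed point, so no nontrivial element of finite order can act freely; hence $\Gamma$ is torsion-free, and therefore slim.

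Next I would convert the volume bound into a diastole bound. By the volume estimate recalled just before the statement, if $\mathrm{dias}(\Gamma,X)\geq\eta_0$ then a ball of radius $\eta_0/2$ in $M$ embeds isometrically into $X$, forcing $\mu_M(M)\geq v_0$; the contrapositive of this, together with the hypothesis $\mu_M(M)<v_0$, gives $\mathrm{dias}(\Gamma,X)<\eta_0$. On the other hand, the open set $U\subset M$ that is $\mathrm{CAT}(-\varepsilon)$ pulls back, via the local isometry $\pi\colon X\to M$, to a nonempty open subset $\pi^{-1}(U)\subset X$ that is locally isometric to $U$; thus any point of $\pi^{-1}(U)$ has an open neighbourhood which is $\mathrm{CAT}(-\varepsilon)$, so $X$ satisfies hypothesis (ii) of Theorem~\ref{teor-dias}. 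All the hypotheses of Theorem~\ref{teor-dias} being in force, $X$ is isometric to $\mathbb{R}$.

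Finally, $M=\Gamma\backslash\mathbb{R}$ with $\Gamma$ a discrete, free, cocompact group of isometries of $\mathbb{R}$. Such a $\Gamma$ cannot be trivial (otherwise $M=\mathbb{R}$ would not be compact) and cannot contain a reflection (a reflection of $\mathbb{R}$ fixes a point, contradicting freeness), so $\Gamma=\langle\tau\rangle$ for a nontrivial translation $\tau$; hence $M$ is a circle of length $\ell(\tau)$, that is, isometric to $\mathbb{S}^1$ after rescaling the metric. I do not expect a serious obstacle here, since the statement is essentially a repackaging of Theorem~\ref{teor-dias}; the only points requiring care are the local-to-global passage of the geometric hypotheses (completeness, geodesic completeness, packing, and the $\mathrm{CAT}(-\varepsilon)$ neighbourhood) to $X$, and the remark that freeness of the deck action forces torsion-freeness and hence the slimness assumption needed to invoke the theorem.
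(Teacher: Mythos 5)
Your proposal is correct and follows essentially the same route the paper intends: the paper proves this corollary implicitly via the remark preceding it (a diastole $\geq\eta_0$ forces an embedded ball of radius $\eta_0/2$ in $M$ and hence $\mu_M(M)\geq v_0$, so small volume gives a thin action), after which Theorem~\ref{teor-dias}(ii) applies to the deck action of $\pi_1(M)$ on the universal cover, which is free, hence torsion-free and slim. Your filling-in of the local-to-global details (Cartan--Hadamard, lifting the $\mathrm{CAT}(-\varepsilon)$ neighbourhood, identifying $\Gamma\backslash\mathbb{R}$ with $\mathbb{S}^1$) matches what the paper takes for granted.
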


\section{Preliminaries}
\label{sec-preliminaries}

The open (resp. closed) ball of center $x$ and radius $r$ in a metric space $X$ will be denoted by $B(x,r)$ (resp. $\overline{B}(x,r)$). A geodesic in a metric space $X$ is an isometry $c\colon [a,b] \to X$, where $[a,b]$ is an interval of $\mathbb{R}$. The endpoints of the geodesic $c$ are the points $c(a)$ and $c(b)$; a geodesic with endpoints $x,y\in X$ is also denoted by $[x,y]$. A geodesic ray is an isometry $c\colon [0,+\infty) \to X$ and a geodesic line is an isometry $c\colon \mathbb{R} \to X$.
A metric space $X$ is called   {\em geodesic}  if for every two points $x,y \in X$ there is a geodesic with endpoints   $x$ and $y$.

\subsection{Geodesically complete CAT(0)-spaces}
 
\label{subsec-cat(0)}

A metric space $X$ is CAT$(0)$ if it is geodesic and every geodesic triangle $\Delta(x,y,z)$  is thinner than its Euclidean comparison triangle   $\overline{\Delta} (\bar{x},\bar{y},\bar{z})$, that is  for any couple of points $p\in [x,y]$ and $q\in [x,z]$ we have $d(p,q)\leq d(\bar{p},\bar{q})$ where $\bar{p},\bar{q}$ are the comparison points of $p,q$ in $\overline{\Delta} (\bar{x},\bar{y},\bar{z})$ (see   for instance \cite{BH09} for the basics of CAT(0)-geometry).\\
As consequence of the definition every CAT$(0)$-space is uniquely geodesic: for every two points $x,y$ there exists a unique geodesic segment with endpoints $x$ and $y$. A CAT$(0)$-metric space is {\em geodesically complete}   if any geodesic segment $c\colon [a,b] \to X$ can be extended to a geodesic line. For instance a complete CAT$(0)$-homology manifold is always geodesically complete (\cite{BH09}, Proposition II.5.12).\\
A subset $D$ of a complete, geodesically complete CAT$(0)$-space $X$ is {\em convex} for every two points $x,y\in D$ the geodesic $[x,y]$ is contained in $D$.\\
The space of direction $\Sigma_xX$ at a point $x\in X$ is the set of geodesic segments starting from $x$ modulo the equivalence relation $c\sim c'$ if and only if $\angle_x(c,c') = 0$. Given $x,y\in X$ we denote by $[y]_x$ the element of $\Sigma_xX$ defined by the geodesic segment $[x,y]$. \\
For a CAT$(0)$ space we denote by GD$(X)$ the {\emph{geometric dimension}} of $X$ as defined in \cite{Kl99}. By definition GD$(X) = 0$ if $X$ is discrete and 
$$\text{GD}(X) = 1+\sup_{x \in X}\text{GD}(\Sigma_x X).$$
In particular a CAT$(0)$-space has geometric dimension $1$ if and only if all its spaces of directions are discrete.
\vspace{2mm}

\noindent{\em In the following, we will always assume that $X$ is a proper, geodesically complete \textup{CAT}$(0)$-space.} By \cite{LN19} we know that every point $x\in X$ has a well defined integer dimension in the following sense: there exists $n_x\in \mathbb{N}$ such that every small enough ball around $x$ has Hausdorff dimension equal to $n_x$. This defines a stratification of $X$ into pieces of different integer dimensions. Namely, if $X^k$ denotes the subset of points of $X$ with dimension $k$, then $$X= \bigcup_{k\in \mathbb{N}} X^k.$$
The set $X_k$ contains a dense subset $M_k$, which is open in $X$ and locally bi-Lipschitz homeomorphic to $\mathbb{R}^k$ (\cite{LN19}, Theorem 1.2). In particular if $X_k \neq \emptyset$ then there exists some ball in $X$ which is a $k$-dimensional Lipschitz manifold.
The {\em dimension} of $X$ is the supremum of the dimensions of its points:  it coincides with the {\em topological dimension} of $X$, cp. Theorem 1.1 of \cite{LN19},  and with the geometric dimension of $X$.
The formula $$\mu_X := \sum_{k\in \mathbb{N}} \mathcal{H}^k \llcorner X^k$$  where $\mathcal{H}^k$ is the $k$-dimensional Hausdorff measure, defines a natural measure on $X$ which is locally positive and locally finite.
 
\subsection{Isometries of CAT(0)-spaces}

\label{subsec-isometries}
The group of isometries of $X$ will be denoted by Isom$(X)$. The translation length of $g\in \text{Isom}(X)$ is by definition $$\ell(g) := \inf_{x\in X}d(x,gx).$$
 If the infimum is realized then $g$ is called {\em elliptic} if $\ell(g) = 0$ and {\em hyperbolic} otherwise;   $g$ is called {\em parabolic} when the infimum is not realized. \\
 It is well known that the function $d_g \colon X \to [0,+\infty)$, $x\mapsto d(x,gx)$ is convex. Its $\tau$-level sets, for $\tau \geq 0$, are by definition the subsets 
 $$M_\tau(g) := \lbrace x\in X \text{ s.t. } d(x,gx) \leq \tau\rbrace$$  which are closed, convex subsets of $X$. We recall that the {\em minimum set of $g$} is defined as 
 $$\text{Min}(g):=M_{\ell(g)}(g)$$ that is, $\text{Min}(g)$ is the subset of points of $X$ realizing the minimum in the definition of the translation length of $g$. Notice that if $g$ is parabolic then $\text{Min}(g) = \emptyset$, while if $g$ is elliptic then $\text{Min}(g)$ is the set of points fixed by $g$, in which case it will be denoted also by $\text{Fix}(g)$. \\
 It is known that if $g$ in an hyperbolic isometry then $\text{Min}(g)$ splits isometrically as a product $D(g) \times \mathbb{R}$, where $D(g)$ is a convex subset of $X$, and $g$ acts on $D(g) \times \mathbb{R}$ respecting the product decomposition and acting as the identity on $D(g)$ and as a translation of length $\ell(g)$ on $\mathbb{R}$. \\
 When dealing with torsion there is a class of elliptic isometries which deserves a name. We say that an elliptic isometry $g$ of $X$ is \emph{slim} if $\text{Fix}(g)$ has empty interior. A group of isometries $\Gamma$ is said to be \emph{slim} if every non-trivial elliptic isometry of $\Gamma$ is slim. For instance every torsion-free group is trivially slim, as well as any discrete group of isometries of a CAT$(0)$-homology manifold. 
 \begin{lemma}
 	\label{lemma-class-manifolds}
 	Let $X$ be a proper \textup{CAT}$(0)$-space which is a homology manifold, and let $\Gamma$ be a discrete group of isometries of $X$. Then every elliptic isometry of $\Gamma$ is slim.
 \end{lemma}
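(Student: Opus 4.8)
The plan is to derive the statement from Newman's classical theorem on periodic transformations of homology manifolds, after two elementary reductions.

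First I would observe that, since $\Gamma$ is discrete, every elliptic element of $\Gamma$ has \emph{finite} order. Indeed, if $g\in\Gamma$ is elliptic it fixes some point $p\in X$, so $\langle g\rangle$ is contained in the point-stabilizer $\mathrm{Stab}_\Gamma(p)$; and $\mathrm{Stab}_\Gamma(p)$ is finite, being a discrete subgroup of the compact group $\mathrm{Stab}_{\mathrm{Isom}(X)}(p)$ (recall that $X$ is proper, so stabilizers of points in $\mathrm{Isom}(X)$ are compact; alternatively, a discrete group of isometries of a proper space has finite point-stabilizers, as is used in Example~\ref{example-Lytchak}). Hence $g$ has finite order. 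This is the only place where discreteness of $\Gamma$ enters the proof.

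Now let $g\in\Gamma$ be a non-trivial elliptic isometry and, arguing by contradiction, suppose that $\mathrm{Fix}(g)$ has non-empty interior. Set $G:=\langle g\rangle$. By the previous step $G$ is a non-trivial finite group; it acts effectively on $X$ (its only element acting as the identity is the neutral one, by definition of the order of $g$); $X$ is connected, being a \textup{CAT}$(0)$-space and hence geodesic; $X$ is a homology manifold by hypothesis; and, since $G$ is generated by $g$, one has $\mathrm{Fix}(G)=\bigcap_{k}\mathrm{Fix}(g^{k})=\mathrm{Fix}(g)$, which by assumption has non-empty interior. But this is impossible: Newman's theorem asserts that a non-trivial finite group cannot act effectively on a connected homology manifold with a fixed-point set of non-empty interior (equivalently, a finite group acting trivially on a non-empty open subset of a connected homology manifold acts trivially everywhere). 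The manifold version of this statement is classical; its extension to $\mathbb{Z}$-homology manifolds is a routine application of Smith theory — one reduces to a subgroup of prime order $p$, noting that a $\mathbb{Z}$-homology manifold is in particular a $\mathbb{Z}/p$-homology manifold, and bounds the local dimension of the fixed set by $\mathbb{Z}/p$-Smith theory; the argument being purely local around a frontier point of $\mathrm{Fix}(g)$, no compactness of $X$ is required (see for instance Bredon's \emph{Introduction to Compact Transformation Groups}, Chapter~III). We conclude that $\mathrm{Fix}(g)$ has empty interior, i.e.\ $g$ is slim; as $g$ was an arbitrary non-trivial elliptic element of $\Gamma$, the group $\Gamma$ is slim.

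I do not expect a serious obstacle, since the whole content of the lemma is carried by Newman's theorem and the two reductions above are routine; the only points meriting attention are quoting Newman's theorem in the right generality ($\mathbb{Z}$-homology manifolds, and no compactness of $X$) and noting that neither hypothesis is superfluous. Discreteness is indispensable because Newman's theorem is a statement about finite groups, and an elliptic isometry of infinite order lies outside its scope; and the homology-manifold hypothesis cannot be dropped either — for example, the regular trivalent tree $T_{3}$ (a geodesically complete \textup{CAT}$(0)$-space which is not a homology $1$-manifold) admits an order-two isometry fixing an open neighbourhood of a vertex pointwise while acting non-trivially on a subtree beyond an adjacent vertex, hence with fixed-point set of non-empty interior.
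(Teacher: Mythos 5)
Your proof is correct, and it rests on the same engine as the paper's: reduce to a finite (then prime) order element via discreteness, and invoke Newman. The difference lies in how Newman's theorem is accessed. The paper first applies the Lytchak--Nagano structure theorem for geodesically complete \textup{CAT}$(0)$ homology manifolds (note that properness plus the homology-manifold hypothesis gives geodesic completeness for free, by \cite{BH09}, Prop.\ II.5.12) to obtain a $\Gamma$-invariant, locally finite subset $E$ with $X\setminus E$ a genuine topological manifold, connected once one disposes of dimension $\leq 2$ separately; it then quotes only the classical 1931 Newman theorem on $X\setminus E$ and observes that $E$ has empty interior. You instead apply the Smith-theoretic generalization of Newman's theorem to $\mathbb{Z}$-homology manifolds directly on $X$, which spares you both the structure theorem and the connectedness bookkeeping, at the price of relying on the stronger form of Newman (Bredon, Ch.\ III). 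The one point you should make explicit when quoting that version is the finite (cohomological) dimensionality hypothesis it carries for homology manifolds that are not honest topological manifolds; this holds in the present setting (a proper \textup{CAT}$(0)$ homology manifold is geodesically complete, hence finite-dimensional), but it is the hypothesis the paper's detour through $X\setminus E$ implicitly avoids having to verify. Your closing remarks --- that discreteness is exactly what makes the elliptic element finite order, and the trivalent-tree example showing the homology-manifold hypothesis is not removable --- are accurate and a useful addition not present in the paper.
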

 \begin{proof}
 	By \cite{LN-finale-18}, Theorem 1.2 there exists a locally finite subset $E$ of $X$ such that $X\setminus E$ is a topological manifold.  The set $E$ can clearly be chosen to be invariant under the isometry group of $X$. We can assume also that the dimension of $X$ is $\geq 3$, otherwise $X$ is already a topological manifold. In this case the topological manifold $X\setminus E$ is connected. Let $g$ be an elliptic isometry of $\Gamma$. Since $\Gamma$ is discrete,   the order of $g$ is finite. After taking a power of $g$, whose fixed point set contains the fixed point set of $g$, we can suppose $g$ has prime order. By Newman's second theorem \cite{New31} if $g$ fixes an open subset of $X\setminus E$ then it is the identity. This shows that Fix$(g) \cap (X\setminus E)$ has empty interior. Also $E$ has empty interior, so Fix$(g)$ has empty interior.
 \end{proof}
Let now $\Gamma$ be a subgroup of Isom$(X)$. For $x\in X$ and $\eta\geq 0$ we set 
$$\Sigma_\eta(x) := \lbrace g\in \Gamma \text{ s.t. } d(x,gx) < \eta\rbrace$$
$$\overline{\Sigma}_\eta(x) := \lbrace g\in \Gamma \text{ s.t. } d(x,gx) \leq \eta\rbrace$$ 
and we define $\Gamma_\eta(x) := \langle \Sigma_\eta(x) \rangle$, $\overline{\Gamma}_\eta(x) = \langle \overline{\Sigma}_\eta(x) \rangle$. For instance when $\eta=0$ we have $\overline{\Sigma}_0(x) = \overline{\Gamma}_0(x) = \text{Stab}_\Gamma(x)$, the stabilizer of $x$.
The subgroup $\Gamma$ is discrete if for all $x\in X$ and all $\eta\geq 0$ the set $\Sigma_\eta(x)$ is finite. \\
The {\em systole} of $\Gamma$ {\em at a point} $x\in X$ is 
$$\text{sys}(\Gamma,x) := \inf_{g\in \Gamma^*} d(x,gx)$$
where $\Gamma^* = \Gamma \setminus \lbrace \text{id} \rbrace$.
Finally, the central notion of this paper is  the {\em diastole} of $\Gamma$, that is the quantity:
$$\text{dias}(\Gamma,X) = \sup_{x\in X}\text{sys}(\Gamma,x).$$

\noindent We end this section with some facts about finite sets of isometries we will need later. The first  one is a well-known fact about commuting isometries.

\begin{prop}[\cite{Gel11}, Lemma 2.7]
	\label{prop-commuting-isometries}
	Let $X$ be a complete \textup{CAT}$(0)$-space and let $g_1,\ldots,g_n$ be commuting isometries of $X$. Let $\tau_1,\ldots,\tau_n \geq 0$ such that $M_{\tau_i}(g_i) \neq \emptyset$. Then $\bigcap_{i=1}^n M_{\tau_i}(g_i) \neq \emptyset$.
\end{prop}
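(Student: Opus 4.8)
The plan is to induct on $n$, using only two elementary facts about displacement functions together with the nearest-point projection onto closed convex subsets of a complete \textup{CAT}$(0)$-space. The two facts are: \emph{(1)} each $M_{\tau_i}(g_i)$ is a closed convex subset of $X$, being a sublevel set of the continuous convex function $d_{g_i}$, so any finite intersection of such sets is again closed and convex; and \emph{(2)} commuting isometries preserve each other's sublevel sets, since $d(g_nx, g_i g_n x) = d(g_n x, g_n g_i x) = d(x, g_i x)$ whenever $g_n g_i = g_i g_n$, hence $g_n\cdot M_{\tau_i}(g_i) = M_{\tau_i}(g_i)$.

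The base case $n=1$ is the hypothesis. For the inductive step I would set $C := \bigcap_{i=1}^{n-1} M_{\tau_i}(g_i)$, which is nonempty by the inductive hypothesis and closed and convex by \emph{(1)}; by \emph{(2)} it is invariant under $g_n$. Since $X$ is complete and $C$ is a nonempty closed convex set, the nearest-point projection $\pi_C \colon X \to C$ is well defined and $1$-Lipschitz, and because $g_n$ is an isometry with $g_n C = C$ one checks that $\pi_C$ is $g_n$-equivariant, i.e.\ $\pi_C(g_n x) = g_n\,\pi_C(x)$ for every $x$ (the point $g_n\pi_C(x)$ lies in $C$ and realizes $d(g_n x, C)$, so it must equal $\pi_C(g_n x)$ by uniqueness of the projection). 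Now choose any $y \in M_{\tau_n}(g_n)$, nonempty by hypothesis, and put $z := \pi_C(y) \in C$. Then
\[
d(z, g_n z) = d\big(\pi_C(y),\, \pi_C(g_n y)\big) \le d(y, g_n y) \le \tau_n,
\]
so $z \in C \cap M_{\tau_n}(g_n) = \bigcap_{i=1}^{n} M_{\tau_i}(g_i)$, which is therefore nonempty.

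The argument is short, and the only step that needs genuine care is the existence and $g_n$-equivariance of $\pi_C$: this is where completeness of $X$ enters (to guarantee that the projection exists and is unique) together with the invariance $g_n C = C$ supplied by the commutation hypothesis. I would also stress that no semisimplicity assumption on the $g_i$ is needed — even when some $g_i$ is parabolic, the hypothesis $M_{\tau_i}(g_i)\neq\emptyset$ is all that is used, and the argument never requires the infimum defining $\ell(g_i)$ to be attained.
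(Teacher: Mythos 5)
Your proof is correct. The paper itself gives no argument for this proposition --- it is simply quoted from \cite{Gel11}, Lemma~2.7 --- and your induction via the $1$-Lipschitz, $g_n$-equivariant nearest-point projection onto the closed convex set $C=\bigcap_{i=1}^{n-1}M_{\tau_i}(g_i)$ is exactly the standard argument behind that reference; every step (convexity of the sublevel sets, $g_n$-invariance of $C$ from commutativity, equivariance of $\pi_C$ by uniqueness of the projection, and the final estimate $d(\pi_C(y),g_n\pi_C(y))\le d(y,g_ny)\le\tau_n$) checks out, and you are right that no semisimplicity of the $g_i$ is needed.
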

\noindent The second one is a similar statement for elliptic isometries:
\begin{prop}[\cite{BH09}, Corollary II.2.8]
	\label{prop-fix-points-finite}
	Let $X$ be a complete \textup{CAT}$(0)$-space and let $F$ be a finite group of isometries of $X$. Then $\bigcap_{g\in F}  \textup{Fix}(g) \neq \emptyset$.
\end{prop}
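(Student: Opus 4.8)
The plan is to invoke the Bruhat--Tits fixed point principle: a finite group of isometries of a complete \textup{CAT}$(0)$-space fixes the circumcenter of any bounded invariant set. First I would fix an arbitrary base point $x_0\in X$ and set $B:=F\cdot x_0=\{gx_0 : g\in F\}$. Since $F$ is finite, $B$ is a finite, hence bounded, subset of $X$; and since $F$ is a group, $B$ is $F$-invariant, i.e.\ $gB=B$ for every $g\in F$. The strategy is to attach to $B$ a canonical point of $X$ and observe that canonicity forces it to be fixed by $F$.

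Next I would show that every bounded subset $B$ of a complete \textup{CAT}$(0)$-space admits a unique \emph{circumcenter}: a point $c=c(B)$ with $B\subseteq\overline{B}(c,\rho)$, where $\rho:=\inf\{r\geq 0 : B\subseteq\overline{B}(y,r)\ \text{for some}\ y\in X\}$ is the circumradius. For uniqueness, suppose $c_1\neq c_2$ both realize $\rho$, and let $m$ be the midpoint of $[c_1,c_2]$. Applying the \textup{CAT}$(0)$ comparison inequality to the triangle with vertices $b,c_1,c_2$ yields $d(b,m)^2\leq\tfrac12 d(b,c_1)^2+\tfrac12 d(b,c_2)^2-\tfrac14 d(c_1,c_2)^2\leq\rho^2-\tfrac14 d(c_1,c_2)^2$ for every $b\in B$, so $B\subseteq\overline{B}(m,\rho')$ with $\rho'<\rho$, contradicting the minimality of $\rho$. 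The same estimate applied to a minimizing sequence $(y_n)$ of approximate centers shows that $(y_n)$ is Cauchy; completeness of $X$ gives a limit $c$, which by continuity of $d$ realizes the circumradius.

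Finally I would use that the circumcenter is natural with respect to isometries: for any isometry $g$ of $X$ and any bounded $B$ one has $c(gB)=g\,c(B)$, since $g$ carries the smallest closed ball containing $B$ to the smallest closed ball containing $gB$. Taking $B=F\cdot x_0$, for every $g\in F$ we get $g\,c(B)=c(gB)=c(B)$, so $c(B)\in\textup{Fix}(g)$ for all $g\in F$; hence $c(B)\in\bigcap_{g\in F}\textup{Fix}(g)$, which is therefore nonempty.

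The only genuinely technical point — and the step I expect to require the most care — is the existence and uniqueness of the circumcenter, which is precisely where completeness and the \textup{CAT}$(0)$ (convexity of the metric) hypothesis enter; everything else is formal. Note that neither geodesic completeness nor properness of $X$ is used here, only completeness, which is why the proposition can be stated for arbitrary complete \textup{CAT}$(0)$-spaces.
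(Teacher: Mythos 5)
Your argument is correct and is precisely the standard Bruhat--Tits fixed point argument (orbit of a point is a bounded invariant set; the circumcenter exists and is unique by completeness and the CAT$(0)$ comparison inequality; naturality under isometries forces it to be fixed). The paper does not prove this statement itself but cites \cite{BH09}, Corollary II.2.8, whose proof is exactly the one you give, so there is nothing to add.
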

 
 \subsection{Packing conditions on CAT(0)-spaces}

\label{subsec-packed,margulis}
Let $P_0,r_0 > 0$: we say $X$ is {\em $P_0$-packed at scale $r_0$} if for every $x\in X$ the cardinality of every $r_0$-separated subset of $\overline{B}(x,3r_0)$ is at most $P_0$ (recall that a subset $Y\subseteq X$ is {\em $r_0$-separated} subset if $d(y,y') > r_0$ for all $y,y'\in Y$). \linebreak We will simply say that {\em $X$ is packed} if it is $P_0$-packed at scale $r_0$ for some $P_0,r_0>0$. If $X$ is a complete, geodesically complete CAT$(0)$-space which is $P_0$-packed at scale $r_0$ then the dimension of $X$ is at most $\frac{P_0}{2}$, and for every $R\geq 0$ and for every $x\in X$ it holds:
\begin{equation}
	\label{eq-volume-estimate}
	v(R) \leq \mu_X(B(x,R)) \leq V(R),
\end{equation}
where $v(R)$ and $V(R)$ are functions  only depending on the packing constants  $P_0,r_0$, as showed in  \cite{CavS20}.
Another remarkable consequence of a  packing condition at some fixed scale   is 
the following version of the Margulis' Lemma due to Breuillard-Green-Tao, which we decline for geodesically complete CAT(0)-spaces.

\begin{theo}[\cite{BGT11}, Corollary 11.17 and Corollary 11.2]
	\label{theorem-Margulis-nilpotent}
	Let $X$ be a complete, geodesically complete, \textup{CAT}$(0)$-space which is $P_0$-packed at scale $r_0$ and $\Gamma$ be a discrete group of isometries of $X$. There exist $\varepsilon_0 = \varepsilon_0(P_0,r_0) > 0,$ $I_0 = I_0(P_0,r_0) \geq 0$ and $S_0 = S_0(P_0,r_0) \geq 0$ such that the following holds true: for every $x\in X$ and  every positive $\eta \leq \varepsilon_0$ there exist normal subgroups $\Delta_x, \Lambda_x$ of $\overline{\Gamma}_{\eta}(x)$ such that
	\begin{itemize}
		\item[(i)] $[\overline{\Gamma}_{\eta}(x) : \Delta_x] \leq I_0$;
		\item[(ii)] $\Lambda_x$ is a finite subgroup of $\Delta_x$;
		\item[(iii)] $\Delta_x / \Lambda_x$ is nilpotent of step $\leq S_0$.
	\end{itemize}
	In particular $\overline{\Gamma}_{\eta}(x)$ is virtually nilpotent. Moreover, if $\Gamma$ is cocompact then   $\overline{\Gamma}_{\eta}(x)$ is virtually abelian (cp. \cite{BH09}, Theorem II.7.8).
\end{theo}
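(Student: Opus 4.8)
The plan is to deduce the statement from the general local Margulis lemma of Breuillard--Green--Tao, the only real work being to recast our hypotheses into theirs and then to extract the precise normal-subgroup structure (i)--(iii). After rescaling so that $r_0=1$, the decisive input is that the packing condition propagates to all scales: since $X$ is complete and geodesically complete, \cite{CavS20} gives, for every $0<r\le R$, a bound of the form ``every ball of radius $R$ is covered by at most $C(R/r)$ balls of radius $r$'', where $C$ depends only on $P_0,r_0$. (This is exactly where geodesic completeness is used — the packing condition at the single scale $r_0$ would otherwise say nothing about small scales.) In particular $X$ is doubling with a doubling constant $N_0=N_0(P_0,r_0)$, and for metric spaces with such a bound \cite{BGT11} (Corollary 11.17) produces a threshold $\varepsilon_0=\varepsilon_0(N_0)>0$ and a constant $C_0=C_0(N_0)$ such that, for every discrete group $\Gamma$ of isometries of $X$, every $x\in X$ and every $0<\eta\le\varepsilon_0$, the group $\overline{\Gamma}_{\eta}(x)$ has a subgroup of index at most $C_0$ which is finite-by-(nilpotent of step at most $C_0$); absorbing a small loss when passing between the open and closed versions of $\Sigma_\eta(x)$ costs nothing, since a subgroup of such a group keeps the same qualitative structure with controlled constants. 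All of $\varepsilon_0, C_0$ depend only on $P_0,r_0$.

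It then remains to massage this conclusion into the stated form. The group $\overline{\Gamma}_{\eta}(x)$ is finitely generated, being generated by the finite set $\overline{\Sigma}_{\eta}(x)$ (discreteness of $\Gamma$). Let $\Delta_x$ be the normal core in $\overline{\Gamma}_{\eta}(x)$ of the finite-index subgroup supplied by \cite{BGT11}: it is normal, its index is bounded by a function of $C_0$, hence by a constant $I_0=I_0(P_0,r_0)$, it is finitely generated, and as a subgroup of a finite-by-nilpotent group it is itself finite-by-(nilpotent of step $\le S_0$) for a constant $S_0=S_0(P_0,r_0)$. The torsion elements of a finitely generated finite-by-nilpotent group form a finite characteristic subgroup; take $\Lambda_x$ to be this torsion subgroup of $\Delta_x$. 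Then $\Lambda_x$ is finite; being characteristic in $\Delta_x$, and $\Delta_x$ being normal in $\overline{\Gamma}_{\eta}(x)$, it is normal in $\overline{\Gamma}_{\eta}(x)$; and $\Delta_x/\Lambda_x$ is torsion-free finite-by-nilpotent, hence nilpotent of step $\le S_0$. This is precisely (i)--(iii), and ``$\overline{\Gamma}_{\eta}(x)$ is virtually nilpotent'' follows at once.

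For the cocompact refinement, recall that a group acting cocompactly by isometries on a complete CAT$(0)$-space contains no parabolic elements, so in this case $\Delta_x$ is a finitely generated, virtually nilpotent group consisting of semisimple isometries of $X$; by the Flat Torus Theorem together with the solvable subgroup theorem (\cite{BH09}, Theorem II.7.8) such a group is virtually abelian, and therefore so is $\overline{\Gamma}_{\eta}(x)$, which contains $\Delta_x$ with finite index.

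The only delicate point is the reduction step: one must check that the hypotheses of the \cite{BGT11} local Margulis lemma are met with constants that are honest functions of $(P_0,r_0)$ alone — in particular that their notion of ``discrete'' matches the one used here ($\Sigma_\eta(x)$ finite for all $x,\eta$) and, crucially, that the doubling estimates are needed only at scales down to $\varepsilon_0$, which is exactly the regime controlled by the scale-change inequalities of \cite{CavS20}. Everything past that is soft group theory and standard facts about isometries of CAT$(0)$-spaces already recalled above.
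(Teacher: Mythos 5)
The paper gives no proof of this statement: it is quoted verbatim from \cite{BGT11} (Corollaries 11.17 and 11.2), with the cocompact refinement referred to \cite{BH09}, Theorem II.7.8. Your deduction --- propagating the packing condition to all scales via \cite{CavS20} to meet the doubling hypothesis of the generalized Margulis lemma, passing to the normal core to get $\Delta_x$, taking its (finite, characteristic) torsion subgroup as $\Lambda_x$ to obtain (i)--(iii), noting that the structure descends to $\overline{\Gamma}_\eta(x)\leq\overline{\Gamma}_{\varepsilon_0}(x)$ with the same constants, and invoking semisimplicity plus the Solvable Subgroup Theorem in the cocompact case --- is correct and is precisely the intended reading of the citation.
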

 
We call this $\varepsilon_0(P_0, r_0)$ the {\em Margulis constant}, since it plays the role of the classical Margulis constant in this metric setting. This constant and Theorem \ref{theorem-Margulis-nilpotent} will play a crucial role in our main theorem.\\
We end this section describing more carefully the structure of the groups $\overline{\Gamma}_\eta(x)$ appearing in Theorem \ref{theorem-Margulis-nilpotent}.

	\begin{lemma}[\cite{BGS13}, Lemma 7.4 and Remark 7.2]
		\label{lemma-BGS-center}
		Let $X$ be a complete, \textup{CAT}$(0)$-space and let $N$ be a nilpotent group of isometries of $X$. If $N$ has a semisimple isometry then there exists a semisimple isometry in the center of $N$. The set of semisimple isometries in the center of $N$ forms an abelian normal subgroup of $N$.
	\end{lemma}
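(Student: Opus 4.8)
The plan is to derive the statement from two sub-facts: \textbf{(A)} that $Z(N)$ contains a semisimple isometry, and \textbf{(B)} that the semisimple isometries of any abelian subgroup $A\leq\text{Isom}(X)$ form a subgroup of $A$. Granting these, the set $S$ of semisimple isometries in $Z(N)$ is a subgroup by \textbf{(B)} applied with $A=Z(N)$, it is abelian since $Z(N)$ is, and it is normal in $N$ because being semisimple is a conjugacy invariant while $Z(N)\trianglelefteq N$; this is the asserted abelian normal subgroup, and it is non-trivial as soon as $N$ has a semisimple element, by \textbf{(A)}. So it suffices to prove \textbf{(A)} and \textbf{(B)}.

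\emph{Proof of \textbf{(B)}.} Let $g,h\in A$ be semisimple. Since $\text{Min}(g^{-1})=\text{Min}(g)$, the element $g^{-1}$ is semisimple, so it is enough to show $gh$ is semisimple; we may assume $g\neq\text{id}$. As $gh$ commutes with $g$, it preserves the closed convex set $C:=\text{Min}(g)$. If $g$ is elliptic, put $C=\text{Fix}(g)$, on which $g$ acts trivially; if $g$ is hyperbolic, write $C=D(g)\times\mathbb{R}$ with $g$ acting as the identity on $D(g)$ and as translation by $\ell(g)$ on $\mathbb{R}$. In the hyperbolic case an isometry of $C$ commuting with $g$ permutes the axes of $g$ — the lines $\{y\}\times\mathbb{R}$ — hence respects the splitting; so in both cases $h$ and $gh$ preserve the product structure of $C$, and since $g$ is trivial on $D(g)$ the $D(g)$-component of $gh|_C$ coincides with that of $h|_C$, say $\psi\in\text{Isom}(D(g))$. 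Now $\psi$ is semisimple: by Proposition~\ref{prop-commuting-isometries} there is a point $x\in\text{Min}(h)\cap C$, and since at $x$ the displacement $d(x,hx)=\ell(h)$ is minimal over all of $X$, it is in particular minimal over $C$, which forces the $D(g)$-component of $x$ to realize the minimal displacement of $\psi$. Hence $gh|_C$ is a product of semisimple isometries of a product space, so it is semisimple on $C$ and $\text{Min}_C(gh|_C)\neq\emptyset$. Finally, nearest-point projection onto the $gh$-invariant convex set $C$ is $1$-Lipschitz and commutes with $gh$, so it does not increase the displacement of $gh$; therefore the translation length of $gh$ on $X$ equals its translation length on $C$, and the latter is attained. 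Thus $gh$ is semisimple.

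\emph{Proof of \textbf{(A)}.} If $Z(N)$ contains a non-trivial elliptic, or any hyperbolic, isometry we are done; so assume every non-trivial element of $Z(N)$ is parabolic and argue by induction on the nilpotency class $c$ of $N$. If $c\leq 1$ then $N$ is abelian and the given semisimple element lies in $Z(N)=N$, contradicting our assumption; hence $c\geq 2$. The decisive case is that some semisimple $g$ lies in the second center $Z_2(N)$ (the preimage in $N$ of the center of $N/Z(N)$): choose $h\in N$ with $[g,h]\neq\text{id}$, which is possible since otherwise $g\in Z(N)$; then $z:=[g,h]=g^{-1}(h^{-1}gh)$ lies in $Z(N)$, the conjugate $h^{-1}gh$ is semisimple with $\ell(h^{-1}gh)=\ell(g)$, and it commutes with $g$ exactly because $z$ is central. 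By \textbf{(B)} applied to the abelian group $\langle g,h^{-1}gh\rangle$, the element $z=g^{-1}(h^{-1}gh)$ is semisimple — a non-trivial semisimple element of $Z(N)$, contradicting our assumption and thus proving \textbf{(A)} in this case. The general case is reduced to this one by descending through the upper central series $1=Z_0(N)\trianglelefteq Z_1(N)\trianglelefteq\cdots\trianglelefteq Z_c(N)=N$, using the inclusions $[Z_i(N),N]\subseteq Z_{i-1}(N)$ together with \textbf{(B)} to replace the given semisimple element by a semisimple element of some lower term $Z_j(N)$ until $j\leq 2$ (alternatively, one restricts the $N$-action to a suitable $N$-invariant closed convex subset of $X$); this is the bookkeeping carried out in \cite{BGS13}, Lemma~7.4 and Remark~7.2.

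The step I expect to be the real obstacle is exactly this last reduction in \textbf{(A)}: commutators and products of semisimple isometries need not be semisimple, so one cannot naively push the given semisimple element down the central series — \textbf{(B)} is precisely what makes the single step work in which the relevant commutator has already fallen into the center, and controlling the intermediate descent (or, equivalently, isolating the right invariant subspace) is where the work lies.
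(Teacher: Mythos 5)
The paper does not actually prove this lemma --- it is imported wholesale from \cite{BGS13} (Lemma 7.4 and Remark 7.2), so there is no in-paper argument to compare against. Judged on its own terms, your proposal is solid where it is complete: fact \textbf{(B)} (commuting semisimple isometries have semisimple product, via restriction to $\mathrm{Min}(g)=D(g)\times\mathbb{R}$, Proposition~\ref{prop-commuting-isometries} to locate a point of $\mathrm{Min}(h)\cap\mathrm{Min}(g)$, and the $1$-Lipschitz equivariant projection to pull the conclusion back to $X$) is correct, and the deduction that the semisimple elements of $Z(N)$ form an abelian normal subgroup follows immediately from \textbf{(B)} applied to $A=Z(N)$ (normality is automatic for any subset of the center). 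The case of \textbf{(A)} in which a non-trivial semisimple $g$ lies in $Z_2(N)$ is also correctly handled: $[g,h]=g^{-1}(h^{-1}gh)$ is then central, so the two semisimple factors commute and \textbf{(B)} applies.

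The genuine gap is exactly the one you flag in your closing paragraph: the reduction of \textbf{(A)} from arbitrary nilpotency class to the $Z_2$ case is not carried out, and it is not mere bookkeeping. If $g\in Z_i(N)\setminus Z_{i-1}(N)$ with $i\geq 3$, then $z=[g,h]\in Z_{i-1}(N)$ is a product $g^{-1}(h^{-1}gh)$ of two semisimple isometries that need \emph{not} commute (they commute iff $g$ commutes with $z$, which is only guaranteed when $z$ is central, i.e.\ when $i\leq 2$), so \textbf{(B)} gives you nothing and you cannot conclude that $z$ is semisimple. Descending through the upper central series therefore stalls at the very first step, and the alternative you gesture at (``restricting to a suitable invariant convex subset'') is precisely the missing idea rather than a variant of it. A workable route is, for instance, to produce a non-trivial \emph{normal} subgroup of $N$ consisting entirely of semisimple elements and then invoke the standard fact that a non-trivial normal subgroup of a nilpotent group meets the center; but manufacturing such a subgroup is the actual content of \cite{BGS13}, Lemma 7.4, and your proposal defers it rather than proving it. One further small point: as stated the lemma is vacuously true (the identity is semisimple and central); the intended --- and, in Proposition~\ref{prop-structure-virt-nilpotent}, actually used --- statement concerns a \emph{non-trivial} (indeed hyperbolic) central element, which is the version your argument correctly targets.
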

	\begin{prop}
		\label{prop-structure-virt-nilpotent}
		Let $X$ be a complete, \textup{CAT}$(0)$-space with $\textup{GD}(X) < + \infty$. Let $\Gamma$ be a finitely generated, discrete, virtually nilpotent group of isometries of $X$. Then:
		\begin{itemize}
			\item[(i)]  there exists a $\Gamma$-invariant convex subset $W \times \mathbb{R}^k$, $k\geq 0$, of $X$ such that each $\gamma \in \Gamma$ acts as $(\gamma', \gamma'')$, where $\gamma'$ is not hyperbolic;
			\item[(ii)] a finite index, normal subgroup of $\Gamma$ splits as $\Gamma_W \times \mathbb{Z}^k$, where $\Gamma_W$ is a finitely generated, discrete, nilpotent group acting on $W$ without hyperbolic isometries and $\mathbb{Z}^k$ acts as a lattice on $\mathbb{R}^k$;
			\item[(iii)] the set of semisimple isometries of $\Gamma_W \times \mathbb{Z}^k$ is a virtually abelian, normal subgroup of $\Gamma$. It coincides with the set of elements whose first component is elliptic;
			\item[(iv)] if $\Gamma$ does not contain parabolic elements then it is virtually abelian.
		\end{itemize} 
	\end{prop}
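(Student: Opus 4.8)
The plan is to induct on the geometric dimension $\textup{GD}(X)$ and, for fixed $\textup{GD}(X)$, on the order of the torsion subgroup of a once-and-for-all chosen finite-index normal nilpotent subgroup $N\trianglelefteq\Gamma$ (this exists since $\Gamma$ is virtually nilpotent, and $N$ is finitely generated, being of finite index in $\Gamma$, and discrete). Every invariant set and subgroup produced below will be \emph{canonical} — $\textup{Min}$-sets of $\Gamma$-normalised subgroups, fixed-point sets of $\Gamma$-normalised finite subgroups, and the canonical flats attached to these by the Flat Torus Theorem — so that $N$-invariance upgrades automatically to $\Gamma$-invariance, using that $\Gamma$ normalises $N$ and hence its characteristic subgroups; this is the device that lets us argue "inside $N$" and still land on a $\Gamma$-invariant object in (i).

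If $N$ contains no hyperbolic isometry, then neither does $\Gamma$, because a power of a hyperbolic element is hyperbolic and would lie in $N$; so one takes $k=0$, $W=X$, $\Gamma_W=N$. Here the elliptic elements of $N$ are exactly its finite-order elements (an elliptic element of a discrete group has finite order, and a finite-order element is elliptic by Proposition \ref{prop-fix-points-finite}), hence they form the torsion subgroup of the finitely generated nilpotent group $N$, which is finite and characteristic, so a finite normal subgroup of $\Gamma$, in particular virtually abelian; this is (iii), and (iv) is the sub-case where $N$ has no parabolic either, so $N$, and thus $\Gamma$, is finite.

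In the remaining case $N$ has a hyperbolic isometry, so by Lemma \ref{lemma-BGS-center} the abelian normal subgroup $A\le Z(N)$ of central semisimple isometries is non-trivial and $\Gamma$ normalises it. If every element of $A$ is elliptic, then $A$ is finite; I would restrict to $X'=\textup{Fix}(A)$, a non-empty closed convex $\Gamma$-invariant set on which every hyperbolic element of $N$ still acts hyperbolically (its axis meets $X'$ because $A$ is central, preserves the $\textup{Min}$-set and acts trivially on its $\mathbb{R}$-factor), and apply the inductive hypothesis to the image of $N$ on $X'$, whose torsion subgroup is strictly smaller while $\textup{GD}$ has not increased. So we may assume $A$, still non-trivial by Lemma \ref{lemma-BGS-center}, contains a hyperbolic isometry. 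Since $A$ is finitely generated abelian with all elements semisimple, Proposition \ref{prop-commuting-isometries} applied to a generating set shows $\bigcap_{a\in A}\textup{Min}(a)=\textup{Min}(A)$ is non-empty, and the Flat Torus Theorem (cf. \cite{BH09}) gives a canonical splitting $\textup{Min}(A)=C\times\mathbb{R}^{m}$, $m\ge 1$, on which the torsion-free part of $A$ acts as a full-rank lattice of translations of $\mathbb{R}^m$ and the finite torsion of $A$ acts trivially on $\mathbb{R}^m$. As $\Gamma$ normalises $A$ it preserves $\textup{Min}(A)$ and this canonical splitting; applying the inductive hypothesis to the image of $N$ on $C$, whose geometric dimension is strictly smaller, yields $C\supseteq W\times\mathbb{R}^{m'}$ with the stated properties, and splicing gives the $\Gamma$-invariant $W\times\mathbb{R}^{m+m'}=:W\times\mathbb{R}^{k}$ with each $\gamma$ acting as $(\gamma',\gamma'')$, $\gamma'$ not hyperbolic; this is (i).

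For (ii) one collects the lattices produced at the successive steps: $N$ acts on the flat factor through affine isometries whose translation part is of finite index by the Flat Torus Theorem and standard crystallographic-group arguments, the kernel of the projection of $N$ to $\textup{Isom}(W)$ is then a full-rank lattice $\mathbb{Z}^{k}$ of translations of $\mathbb{R}^{k}$, it is central in $N$, and since the action on the flat factors through an abelian quotient of $N$ the commutator subgroup acts trivially on $\mathbb{R}^k$ — which is what excludes a Heisenberg-type obstruction and lets a finite-index subgroup split as $\Gamma_W\times\mathbb{Z}^k$ with $\Gamma_W$ finitely generated, discrete, nilpotent and free of hyperbolic isometries on $W$. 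Statement (iii) is then read off: $(w,v)$ is semisimple iff $w$ is elliptic (if $w$ is parabolic, $(w,v)$ is parabolic; if $w$ is elliptic, $(w,v)$ realises its displacement on the flat through a fixed point of $w$), so the semisimple elements of $\Gamma_W\times\mathbb{Z}^k$ form $T_{\Gamma_W}\times\mathbb{Z}^k$ with $T_{\Gamma_W}$ the finite torsion subgroup of $\Gamma_W$ — a subgroup, virtually abelian, and normal in $\Gamma$ since semisimplicity is conjugation-invariant and $\Gamma_W\times\mathbb{Z}^k\trianglelefteq\Gamma$. Finally, if $\Gamma$ has no parabolic element, neither does $\Gamma_W$, so $\Gamma_W=T_{\Gamma_W}$ is finite and $\Gamma_W\times\mathbb{Z}^k$ is virtually $\mathbb{Z}^k$, whence $\Gamma$ is virtually abelian; this is (iv). The main obstacles I expect are the induction bookkeeping (checking that the group induced on $C$ is again finitely generated, discrete and nilpotent, and that the translation group on each flat is genuinely discrete so the Flat Torus Theorem applies with the right rank), and above all the passage in (ii) from a central extension by $\mathbb{Z}^k$ to an honest finite-index direct factor; securing $m\ge 1$ — i.e. promoting Lemma \ref{lemma-BGS-center} from a central semisimple element to a central hyperbolic one — through the torsion-reduction step is the other point needing care.
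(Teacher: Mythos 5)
Your overall strategy is the one the paper uses: induct on $\textup{GD}(X)$, use Lemma \ref{lemma-BGS-center} to find central semisimple elements of a finite-index nilpotent subgroup $N$, split off a Euclidean factor with the Flat Torus Theorem, project to the complementary factor and recurse, then read off (iii) and (iv) from the product structure. Two of the points you flag as delicate are handled more directly in the paper. First, the torsion double-induction with the $\textup{Fix}(A)$ reduction is avoidable: a finitely generated nilpotent group is virtually torsion-free, so one may assume from the start that $N$ is torsion-free and normal in $\Gamma$; then every non-trivial semisimple element of the discrete group $N$ is hyperbolic, and the central semisimple subgroup $H$ given by Lemma \ref{lemma-BGS-center} (together with its finitely many $\Gamma$-conjugates, which remain semisimple and central because $Z(N)$ is characteristic in $N$) generates a torsion-free abelian normal subgroup of $\Gamma$ of positive rank. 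This disposes of your ``securing $m\geq 1$'' concern. Second, the passage in (ii) from a normal $\mathbb{Z}^k$ to a finite-index direct factor is not something you need to re-derive by crystallographic arguments: it is part of the Flat Torus Theorem as stated in \cite{BH09}, Theorem II.7.1 (a finite-index subgroup contains the normal free abelian subgroup as a direct factor), which is exactly what the paper cites.

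The one genuine gap is the $\Gamma$-invariance in the inductive step. You apply the inductive hypothesis to the image of $N$ on $C$ and invoke ``canonicity'' to promote $N$-invariance of the resulting $W\times\mathbb{R}^{m'}$ to $\Gamma$-invariance; but the proposition, as stated and as available inductively, does not assert that the set $W\times\mathbb{R}^k$ it produces is canonical, so nothing forces the set obtained for $N|_C$ to be preserved by the image of $\Gamma$. Either strengthen the inductive statement to include a canonicity clause (and then prove it), or do what the paper does: check that the projection $p(\Gamma)$ of the whole group $\Gamma$ to the isometries of the complementary factor is again finitely generated, discrete and virtually nilpotent (discreteness follows because it contains the discrete group $\Gamma_{W'}$ with finite index), and apply the induction to $p(\Gamma)$ directly. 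With that repair, and the two simplifications above, your argument matches the paper's proof.
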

	\begin{proof}
		We show (i) and (ii) by induction on $n:=\text{GD}(X)$, where the case $n=0$ is trivial. If $\Gamma$ does not contain hyperbolic isometries it is enough to take $W=X$ and $k=0$. Suppose $\Gamma$ has a hyperbolic isometry and denote by $N$ a nilpotent subgroup of finite index. Since $N$ is finitely generated and nilpotent it is not restrictive to suppose $N$ torsion-free and normal in $\Gamma$. Clearly also $N$ has hyperbolic isometries, so there exists a hyperbolic isometry in the center of $N$ by Lemma \ref{lemma-BGS-center}. We consider the set $H$ of semisimple isometries in the center of $N$: it is an abelian subgroup of $N$ by Lemma \ref{lemma-BGS-center}. Let $\gamma_1,\ldots,\gamma_n \in \Gamma$ such that $\Gamma = \bigcup_{i=1}^n\gamma_iN$. Observe that the isometries of $\gamma_i H \gamma_i^{-1}$ are still semisimple and belong to the center of $N$. Indeed the center of $N$ is characteristic in $N$, so it is normal in $\Gamma$.
		The isometries $\lbrace \gamma_i H \gamma_i^{-1} \rbrace_{i=1,\ldots,n}$ generate an abelian group which is clearly normal in $\Gamma$ and has positive rank. Up to passing to the characteristic subgroup of this abelian group we can suppose to have an abelian, torsion-free, normal subgroup $A$ of $\Gamma$. By the flat torus Theorem (cp. \cite{BH09}, Theorem II.7.1) we know that there exists a convex subset $W' \times \mathbb{R}^j$ of $X$ which is preserved by $\Gamma$, and moreover each $\gamma \in \Gamma$ acts as $(\gamma',\gamma'')$. Furthermore $\Gamma$ has a subgroup of finite index which splits as $\Gamma_{W'} \times A \cong \Gamma_{W'} \times \mathbb{Z}^j$. Let $p(\Gamma)$ be the projection of $\Gamma$ on $\text{Isom}(W')$. Clearly it is virtually nilpotent and finitely generated. We need to show it is discrete. It contains $\Gamma_{W'}$ as finite index subgroup and it is easy to see that $\Gamma_{W'}$ is discrete, so also $p(\Gamma)$ is discrete. Finally observe that since the rank of $A$ is strictly positive we have $\text{GD}(W') < \text{GD}(X)$, so we can apply induction on $p(\Gamma)$. So there exists a convex subset $W\times \mathbb{R}^k$ which is $p(\Gamma)$-invariant and each isometry of $p(\Gamma)$ preserves the product decomposition, i.e. each $\gamma \in p(\Gamma)$ decomposes as $(\gamma',\gamma'')$ with $\gamma'$ not hyperbolic. This means that the set $W\times \mathbb{R}^k \times \mathbb{R}^j$ does the job for $\Gamma$. Moreover by induction we can also assume that a finite index subgroup of $\Gamma_{W'}$ splits as $\Gamma_W \times \mathbb{Z}^k$, so a finite index subgroup of $\Gamma$ splits as $\Gamma_W \times \mathbb{Z}^{k+j}$. \\
		An isometry $(\gamma',\gamma'')$ of $\Gamma_W \times \mathbb{Z}^k$ is semisimple if and only if both $\gamma'$ and $\gamma''$ are semisimple (\cite{BH09}, Proposition II.6.9). Since $\gamma''$ is always semisimple because it is an isometry of $\mathbb{R}^k$, we conclude that $(\gamma',\gamma'') \in \Gamma_W \times \mathbb{Z}^k$ is semisimple if and only if $\gamma'$ has finite order. Since the finite order elements of a nilpotent group form a subgroup, we see that the set of semisimple isometries of $\Gamma_W \times \mathbb{Z}^k$ are a subgroup. Let $(\gamma',\gamma'')$ be a semisimple isometry of $\Gamma_W \times \mathbb{Z}^k$ and $(g',g'')$ be an element of $\Gamma$. Then $$(g',g'')(\gamma',\gamma'')(g'^{-1}, g''^{-1}) = (g'\gamma'g'^{-1}, g''\gamma''g''^{-1}).$$
		Since $\Gamma_W \times \mathbb{Z}^k$ is normal in $\Gamma$ we know that $(g'\gamma'g'^{-1}, g''\gamma''g''^{-1})$ still belongs to $\Gamma_W \times \mathbb{Z}^k$. Moreover $g'\gamma'g'^{-1}$ is still of finite order: this shows that the subgroup of semisimple isometries of $\Gamma_W \times \mathbb{Z}^k$ is normal in $\Gamma$. Clearly the quotient of this group by the torsion group of $\Gamma_W$, which is finite, is abelian, so this group is virtually abelian.\\
		Finally (iv) follows immediately: indeed if $\Gamma$ does not have parabolic elements then also $\Gamma_W \times \mathbb{Z}^k$ does not have parabolic elements. This means that $\Gamma_W$ is finite, so $\Gamma_W \times \mathbb{Z}^k$ is virtually abelian. Therefore also $\Gamma$ is virtually abelian.
%
%
%
%
%
	\end{proof}

\subsection{Almost cocompact  groups of CAT(0)-spaces}

\label{subsec-almost cocompact}
A  group of isometries $\Gamma$ of    a complete, geodesically complete CAT$(0)$-space $X$ is called {\em cocompact} if there exists a compact subset $K \subset X$ such that $X = \bigcup_{g\in \Gamma} gK$. The main theorem of the paper will be proved 
under a weaker assumption, which we call  {\em almost-cocompactness}, which   we now describe.\\ 
 For $\delta > 0$ we define the {\em $\delta$-thick subset} of $X$ as 
$$\text{Thick}_\delta(\Gamma, X):=\lbrace x\in X \text{ s.t. sys}(\Gamma,x)\geq \delta\rbrace.$$
This is clearly a $\Gamma$-invariant subset of $X$. We say $\Gamma$ is {\em almost-cocompact} if the action of $\Gamma$ on $\text{Thick}_\delta(\Gamma,X)$ is cocompact for all $\delta > 0$. In \cite{BGS13}, Section 8, the same notion for manifolds was introduced under the notation InjRad$(\Gamma\backslash X) \to 0$.\\
It is clear that if $\Gamma$ is cocompact then it is almost-cocompact. Moreover if the global systole of the action is strictly positive then almost-cocompactness coincides with cocompactness. Among almost-cocompact actions the most important ones are those with finite covolume:
\begin{defin}
	Let $\Gamma$ be a discrete subgroup of isometries of a complete, geodesically complete CAT$(0)$-space  $X$. An open subset $\mathcal{D} \subset X$ is a {\em fundamental domain} for $\Gamma$ if :
	\begin{itemize}
		\item[(i)] $g\mathcal{D} \cap h\mathcal{D} =\emptyset$ for all $g,h\in \Gamma$, $g \neq h$;
		\item[(ii)] $\bigcup_{g\in \Gamma} g\overline{\mathcal{D}} = X$.
	\end{itemize}
	The group $\Gamma$ is said to have {\em finite covolume} if there exists a fundamental domain $\mathcal{D}$ for $\Gamma$ with $\mu_X(\mathcal{D}) < + \infty$.
\end{defin}
\begin{lemma}[Compare with \cite{BGS13}, §8.4]
	\label{lemma-basic-thick}
	Let $X$ be a complete, geodesically complete, packed, \textup{CAT}$(0)$ space $X$ and let $\Gamma$ be a discrete group of isometries of $X$ with finite covolume. Then $\Gamma$ is almost-cocompact.
\end{lemma}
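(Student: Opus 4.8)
The plan is to derive almost-cocompactness from a Margulis-type maximal-net argument in the quotient, fuelled by the packing volume estimate \eqref{eq-volume-estimate}. Fix $\delta>0$; the goal is to show that $\Gamma$ acts cocompactly on $Y:=\text{Thick}_\delta(\Gamma,X)$. I would first record the standard facts that, $X$ being proper and $\Gamma$ discrete, the $\Gamma$-orbits are discrete and closed, the action is properly discontinuous, and the quotient $M:=\Gamma\backslash X$ is again a proper metric space in which distances are realised by lifts; write $\pi\colon X\to M$ for the $1$-Lipschitz, open, surjective projection. Since $Y$ is closed and $\Gamma$-invariant, $\bar Y:=\pi(Y)$ is a closed, hence proper, subset of $M$, and cocompactness of the action on $Y$ is equivalent to compactness of $\bar Y$: a compact $K\subset X$ with $\Gamma K\supseteq Y$ projects onto $\bar Y$, while conversely a compact $\bar Y$ can be covered by finitely many sets $\pi(\overline{B}(p_i,1))$, and then $K:=\bigcup_i\overline{B}(p_i,1)$ satisfies $\Gamma K\supseteq Y$.

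The crucial local input is that, over $Y$, the projection $\pi$ is an isometric embedding on balls of a definite radius. Indeed, if $y\in Y$ then for all $u,v\in B(y,\delta/6)$ and all $g\in\Gamma^\ast$,
\[
 d(u,gv)\ \ge\ d(y,gy)-d(y,u)-d(y,v)\ >\ \delta-\tfrac{\delta}{3}\ =\ \tfrac{2\delta}{3}\ >\ \tfrac{\delta}{3}\ >\ d(u,v),
\]
by the triangle inequality and $\text{sys}(\Gamma,y)\geq\delta$. Hence every point of $B(y,\delta/6)$ has trivial $\Gamma$-stabilizer and $d_M(\pi u,\pi v)=\inf_{g\in\Gamma}d(u,gv)=d(u,v)$, so $\pi$ restricts to an isometry of $B(y,\delta/6)$ onto $B_M(\pi y,\delta/6)$ (surjectivity onto the metric ball again using properness to realise the infimum). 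In particular $B_M(\pi y,\delta/6)$ is isometric to a ball of radius $\delta/6$ of $X$; since $\mu_M$ agrees locally with the intrinsic measure of such a ball, \eqref{eq-volume-estimate} gives $\mu_M\bigl(B_M(\pi y,\delta/6)\bigr)\geq v(\delta/6)>0$, a constant depending only on $P_0,r_0,\delta$.

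Then I would run the net argument. Pick a maximal $(\delta/3)$-separated subset $N\subseteq\bar Y$ (Zorn's lemma). The balls $\{B_M(\bar y,\delta/6)\}_{\bar y\in N}$ are pairwise disjoint (centres more than $\delta/3$ apart), each of $\mu_M$-mass at least $v(\delta/6)$, and contained in $\bar Y$; since $\Gamma$ has finite covolume, $\mu_M(M)<\infty$, whence $|N|\cdot v(\delta/6)\leq\mu_M(\bar Y)\leq\mu_M(M)<\infty$ and $N$ is finite. By maximality $\bar Y\subseteq\bigcup_{\bar y\in N}\overline{B}_M(\bar y,\delta/3)$, a finite union of compact balls (recall $M$ is proper), so $\bar Y$ is compact; by the equivalence of the first paragraph, $\Gamma$ acts cocompactly on $\text{Thick}_\delta(\Gamma,X)$. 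As $\delta>0$ was arbitrary, $\Gamma$ is almost-cocompact.

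I expect the only real difficulty to be the calibration in the second step — isolating an explicit radius on which $\pi$ is an isometric embedding over the $\delta$-thick part — together with two inputs that must be stated carefully although they are essentially standard: that a discrete group acting on a proper space acts properly discontinuously (so that $M$ is a proper metric space carrying a well-defined local measure), and that finite covolume forces $\mu_M(M)<\infty$. The remainder is the routine Margulis counting.
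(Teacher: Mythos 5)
Your proof is correct and follows essentially the same route as the paper's: both arguments reduce to the fact that over $\text{Thick}_\delta(\Gamma,X)$ the quotient contains disjointly embedded balls of a definite radius, each of measure at least $v(\cdot)>0$ by \eqref{eq-volume-estimate}, so finite covolume forces total boundedness (the paper phrases this contrapositively via non-compactness $\Rightarrow$ infinitely many disjoint balls). Your version merely spells out the calibration $\delta/6$ and the maximal-net bookkeeping that the paper leaves implicit.
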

\begin{proof}
	Suppose $\Gamma\backslash\textup{Thick}_\delta(\Gamma, X)$ is not compact for some $\delta > 0$. It is complete, so it is not totally bounded. This means we can find infinitely many disjoint balls of radius $0<\varepsilon \leq \frac{\delta}{2}$ centered in points of $\Gamma \backslash \textup{Thick}_\delta(\Gamma, X)$. By definition these balls are isometric to the corresponding balls in $X$, in particular their volume is bigger than a universal number $v > 0$ depending on $\varepsilon$ and the packing constants of $X$, by \eqref{eq-volume-estimate}. This contradicts the finite covolume assumption.
\end{proof}

\noindent The existence of a fundamental domain is discussed in the following result.
\begin{prop}
	\label{prop-existence-fundamental-domain}
    Let $X$ be a complete \textup{CAT}$(0)$-space and let $\Gamma$ be a discrete group of isometries. Then there exists a fundamental domain for $\Gamma$ if and only if \textup{dias}$(\Gamma,X) > 0$, i.e. if there exists $x\in X$ such that $\textup{Stab}_\Gamma(x) = \lbrace \textup{id} \rbrace$. In particular fundamental domains exist when $\Gamma$ is slim, e.g. when it is torsion-free or $X$ is a homology manifold by Lemma \ref{lemma-class-manifolds}.
\end{prop}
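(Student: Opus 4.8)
The plan is to prove the circle of implications
$$[\exists\ \text{fundamental domain}]\ \Rightarrow\ [\exists\,x_0\ \text{with}\ \textup{Stab}_\Gamma(x_0)=\{\textup{id}\}]\ \Leftrightarrow\ [\textup{dias}(\Gamma,X)>0]\ \Rightarrow\ [\exists\ \text{fundamental domain}],$$
the last arrow being the substantial one. The bookkeeping is quick. If $\mathcal D$ is a fundamental domain and $x_0\in\mathcal D$, then for $g\neq\textup{id}$ the point $gx_0$ lies in $g\mathcal D$, which is disjoint from $\mathcal D$, so $gx_0\neq x_0$ and $\textup{Stab}_\Gamma(x_0)=\{\textup{id}\}$. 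If $\textup{dias}(\Gamma,X)>0$ some point $x_0$ has $\textup{sys}(\Gamma,x_0)>0$, hence $\textup{Stab}_\Gamma(x_0)=\{\textup{id}\}$; conversely, if $\textup{Stab}_\Gamma(x_0)=\{\textup{id}\}$ then $\overline{\Sigma}_1(x_0)$ is finite by discreteness, so $\textup{sys}(\Gamma,x_0)$ is the minimum of $1$ and of the finitely many positive numbers $d(x_0,gx_0)$ with $g\in\overline{\Sigma}_1(x_0)\cap\Gamma^*$, hence positive. Finally, $X$ proper is separable and $\Gamma$ is discrete, hence countable; when $\Gamma$ is slim, $\bigcup_{g\in\Gamma^*}\textup{Fix}(g)$ is a countable union of closed sets with empty interior (empty if $g$ is not elliptic, of empty interior if $g$ is elliptic), and Baire's theorem furnishes a point with trivial stabilizer — this gives the ``in particular'' clause through the previous equivalence, using also Lemma \ref{lemma-class-manifolds} for the homology–manifold case.

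For the remaining implication, fix $x_0$ with $\textup{Stab}_\Gamma(x_0)=\{\textup{id}\}$ and take the open Dirichlet domain $\mathcal D:=\{x:\ d(x,x_0)<d(x,gx_0)\ \text{for all }g\in\Gamma^*\}$. One checks routinely that $x_0\in\mathcal D$; that $\mathcal D$ is open (near a given point only finitely many $g$ are competitive, by discreteness, while the remaining defining inequalities hold automatically on a neighbourhood); and that the translates $g\mathcal D$ are pairwise disjoint (if $y\in g\mathcal D\cap h\mathcal D$ with $g\neq h$, test the defining inequality of $g^{-1}y\in\mathcal D$ against $g^{-1}h$ and that of $h^{-1}y\in\mathcal D$ against $h^{-1}g$ to get $d(y,gx_0)<d(y,hx_0)<d(y,gx_0)$). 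So only the covering property $\bigcup_{g}g\overline{\mathcal D}=X$ remains, and this is where I expect the difficulty to concentrate.

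The approach to the covering property is as follows. Given $z\in X$, discreteness ensures $\inf_g d(z,gx_0)$ is attained, say at $g_0$; set $w:=g_0^{-1}z$, so $d(w,x_0)\le d(w,gx_0)$ for all $g$, and it suffices to show $w\in\overline{\mathcal D}$. One approaches $w$ along the geodesic $c\colon[0,\rho]\to X$ from $x_0$ to $w$ ($\rho=d(x_0,w)$): for $g\in\Gamma^*$ the function $\varphi_g(s):=d(c(s),gx_0)-s$ is convex and nonnegative (since $d(c(s),gx_0)\ge d(w,gx_0)-d(w,c(s))\ge \rho-(\rho-s)$). For $g$ with $d(x_0,gx_0)>2\rho$ one has $\varphi_g>0$ on $[0,\rho]$ by the triangle inequality; of the finitely many remaining $g$, those with $\varphi_g(\rho)>0$ keep $\varphi_g>0$ near $\rho$ by continuity, so for such $g$ the points $c(s)$ with $s$ close to $\rho$ satisfy the $g$-inequality, and one would like to conclude $c(s)\in\mathcal D$ and hence $w=\lim_{s\to\rho}c(s)\in\overline{\mathcal D}$.

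The hard part is exactly the $g\in\Gamma^*$ with $\varphi_g(\rho)=0$, i.e.\ $d(w,gx_0)=d(w,x_0)$: then $\varphi_g$ is convex, nonnegative and vanishes at the endpoint $\rho$, hence it may vanish on an entire terminal interval $[s_0,\rho]$, and this occurs precisely when the geodesics $[x_0,w]$ and $[gx_0,w]$ share the terminal segment $c([s_0,\rho])$. In a non-branching \textup{CAT}$(0)$-space (e.g.\ a manifold) this would force $gx_0=x_0$, a contradiction, and the argument closes; but in a branching space the coincidence genuinely occurs — in a tree one can arrange that a whole neighbourhood of $w$ is equidistant from $x_0$ and $gx_0$, so that $w\notin\overline{\mathcal D}$ and the bare Dirichlet domain fails the covering property. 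So I expect the real work to be in circumventing this degeneracy: either by choosing the basepoint $x_0$ generically — a second Baire-category argument — so that no such merging of geodesics to points of $\Gamma x_0$ occurs, or by replacing $\mathcal D$ with an enlarged domain in which the ambiguous ``equidistant'' regions between two competing cells are distributed $\Gamma$-equivariantly, which is possible after checking (using slimness) that no group element interchanges two such cells. Everything else above is routine; this is the step that carries the proof.
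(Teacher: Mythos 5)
Your treatment of everything except the covering property coincides with the paper's proof, which uses exactly this Dirichlet domain $\mathcal D=\{y:\ d(y,x_0)<d(y,gx_0)\ \text{for all } g\in\Gamma^*\}$: the equivalence of the two side conditions, the necessity direction, the Baire argument for slim groups, and the openness and disjointness of the translates are all correct and are what the paper does (the paper's necessity argument is even shorter: if every point has nontrivial stabilizer, any nonempty $\mathcal D$ contains a point $x$ fixed by some $g\neq\mathrm{id}$, whence $g\mathcal D\cap\mathcal D\ni x$). The gap is the one you flag yourself: you never prove $\bigcup_g g\overline{\mathcal D}=X$, so your argument stops one step short of a proof. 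For comparison, the paper disposes of this step with the single clause ``since $X$ is a geodesic space,'' which is not a proof either.

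Your suspicion that this step carries the proof is not only justified: as stated, the step is false, and so is the proposition in the generality claimed. Two concrete examples. For the Dirichlet domain itself, let $X$ be the Cayley graph of $F(a,b)$ (the $4$-regular tree), $\Gamma=F(a,b)$ acting freely by left translations, and $x_0$ the midpoint of the edge $[e,a]$; the orbit of $x_0$ is the set of midpoints of the $a$-edges, and every point in the open interior of a $b$-edge has at least two closest orbit points (the two $a$-midpoints adjacent to its nearer endpoint), so $\bigcup_g g\mathcal D$ misses an open set and $\bigcup_g g\overline{\mathcal D}\neq X$. A fundamental domain does exist here (take $x_0$ a vertex), so the statement survives but the basepoint must be chosen well --- your proposed ``second Baire-category argument'' is exactly the missing ingredient, absent from both your write-up and the paper's. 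Worse, the ``if'' direction of the stated equivalence fails outright for non-slim groups: on the $3$-regular tree let $\sigma$ be the order-two isometry fixing one branch $B_3$ at a vertex pointwise and swapping the other two, and $\Gamma=\langle\sigma\rangle$. Points off $B_3$ have trivial stabilizer, so $\mathrm{dias}(\Gamma,X)>0$, yet any fundamental domain must be disjoint from $\mathrm{Fix}(\sigma)=B_3$ and hence $\overline{\mathcal D}\cup\sigma\overline{\mathcal D}$ misses the nonempty interior of $B_3$. So the proposition should only be trusted in the form in which it is actually used --- for slim groups, with a generically chosen basepoint --- and your proposal correctly isolates, but does not close, the hole.
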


\begin{proof}
	Clearly \textup{dias}$(\Gamma,X) > 0$ if and only if there exists $x\in X$ such that $\textup{Stab}_\Gamma(x) = \lbrace \textup{id} \rbrace$. If this is the case then the set 
	$$\mathcal{D} = \lbrace y \in X \text{ s.t. } d(y,x) < d(gy,x) \text{ for all } g \in \Gamma \rbrace$$ 
	is a fundamental domain, since $X$ is a geodesic space. 
	This assumption is satisfied if $\Gamma$ is slim: indeed by Baire theorem $\bigcup_{g\in \Gamma^*}\text{Fix}(g)$ has empty interior, because $\Gamma$ is countable. So there exists a point whose stabilizer is trivial.\\
	Viceversa let us suppose that $\textup{Stab}_\Gamma(x) \neq \lbrace \textup{id} \rbrace$ for every $x\in X$.
	If $\mathcal{D}$ is any subset of $X$ and $x \in \mathcal{D}$ then there exists a non-trivial $g\in \Gamma$ such that $gx=x$, so $g\mathcal{D} \cap \mathcal{D} \neq \emptyset$. Therefore fundamental domains cannot exist.	
\end{proof}

\noindent Example \ref{example-Lytchak} shows that there exist cocompact, discrete groups of isometries $\Gamma$ of complete, geodesically complete, CAT$(0)$-spaces $X$ with $\text{dias}(\Gamma,X) = 0$.

\noindent We conclude with an important property of almost-cocompact, slim groups. A group of isometries $\Gamma$ of a CAT$(0)$-space $X$ is said to be \emph{minimal} if there are no convex, closed, $\Gamma$-invariant proper subsets $C\subsetneq X$.
\begin{prop}[Compare with \cite{BGS13}, proof of Lemma 2 at page 195]
	\label{prop-minimal}
	Let $X$ be a proper, geodesically complete, \textup{CAT}$(0)$-space. Let $\Gamma$ be a discrete, slim, almost-cocompact group of isometries of $X$. Then $\Gamma$ is minimal.
\end{prop}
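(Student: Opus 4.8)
The plan is to argue by contradiction. Suppose $\Gamma$ is \emph{not} minimal, so there is a proper, closed, convex, $\Gamma$-invariant subset $C \subsetneq X$. Since $X$ is proper and $C$ is closed and convex, nearest-point projection $\pi_C \colon X \to C$ is well-defined and $1$-Lipschitz, and it commutes with the $\Gamma$-action because $C$ is $\Gamma$-invariant. Pick a point $x \in X \setminus C$ and set $\delta := d(x,C) > 0$. The first step is to show that the whole ``slab'' of points at distance exactly $\delta$ from $C$ that project to a given point, or more usefully the set of points at distance $\geq \delta/2$ from $C$, is large enough to contain points with small systole; more precisely, I would use geodesic completeness to push $x$ away from $C$: extend the geodesic $[\pi_C(x), x]$ beyond $x$ to a ray (or line), obtaining points $x_t$ with $d(x_t, C) = t$ arbitrarily large, all lying in $X \setminus C$.

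The key step is a systole estimate along this ray. For $g \in \Gamma^*$ and a point $y$ with $d(y,C) = t$, convexity of $d_g$ together with the fact that $\pi_C(gy) = g\pi_C(y)$ and $d(\pi_C(y), g\pi_C(y)) = d(y, gy) - (\text{something})$... actually the clean way: since $\pi_C$ is $1$-Lipschitz and $\Gamma$-equivariant, for any $g$ we have $d(\pi_C(y), g \pi_C(y)) \le d(y, gy)$, but this goes the wrong direction. Instead I would use that the function $t \mapsto d(x_t, g x_t)$ is convex in $t$ and compare its behaviour at $t$ and at $t = 0$ (on $C$): if $d(x_t, g x_t)$ stayed bounded as $t \to \infty$ for all $g$ in a fixed finite set, one could extract a translation-like behaviour. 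The cleaner route is via almost-cocompactness: let $\delta_0 = \mathrm{dias}(\Gamma, X)$ if it is positive — but we cannot assume that. Rather, fix any $\delta > 0$; almost-cocompactness says $\Gamma$ acts cocompactly on $\mathrm{Thick}_\delta(\Gamma, X)$, so if the ray $\{x_t\}$ were to stay inside $\mathrm{Thick}_\delta(\Gamma, X)$, cocompactness would force the ray to come back (modulo $\Gamma$) into a compact set, producing $g \in \Gamma$ moving $x_{t_n}$ a bounded amount while $t_n \to \infty$; but $g$ must preserve the ``level sets of $d(\cdot, C)$'' in an asymptotic sense — since $C$ is $\Gamma$-invariant, $d(gx_{t_n}, C) = d(x_{t_n}, C) = t_n$, and the bounded displacement forces, via the CAT$(0)$ convexity of the distance function between the two rays $\{x_t\}$ and $\{g x_t\}$, that these two rays are parallel and bound a flat strip; running this over a full orbit gives a $\Gamma$-invariant family of parallel lines, hence (by the product decomposition theorem for parallel sets, e.g. via the flat strip / Min-set splitting recalled in the excerpt) a $\Gamma$-invariant splitting $X \cong X' \times \mathbb{R}$ with $\Gamma$ acting on the $\mathbb{R}$-factor — but then $X' \times \{0\}$ is a proper convex $\Gamma$-invariant subset only if $\Gamma$ acts trivially on $\mathbb{R}$, contradicting that the ray escapes; and if $\Gamma$ does act nontrivially on $\mathbb{R}$ we instead directly contradict that $C$ was proper. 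The alternative is that $\{x_t\}$ eventually leaves $\mathrm{Thick}_\delta(\Gamma, X)$, i.e. there are $t_n \to \infty$ and $g_n \in \Gamma^*$ with $d(x_{t_n}, g_n x_{t_n}) < \delta$; then by slimness (to control the elliptic case) and discreteness one analyzes the $g_n$: their minimal sets, or their $\delta$-level sets $M_\delta(g_n)$, are nonempty convex sets meeting $x_{t_n}$, and one derives that the projection of such near-stabilizing behaviour persists down to $C$ using convexity of $d_{g_n}$, eventually showing $\mathrm{sys}(\Gamma, \pi_C(x_{t_n}))$ is also small — but that is automatic and not yet a contradiction. The actual contradiction I would extract: the existence, for every $\delta$, of points arbitrarily far from $C$ with systole $< \delta$ forces, by volume growth \eqref{eq-volume-estimate} and almost-cocompactness, infinitely many disjoint balls of fixed radius in a fundamental domain restricted to a neighbourhood of $C$...

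Let me reststructure: the honest hard part — and the step I expect to be the main obstacle — is reducing the general non-minimality to a splitting. I would instead follow the Ballmann–Gromov–Schroeder argument directly: if $C \subsetneq X$ is closed convex $\Gamma$-invariant, consider the function $f(x) = d(x, C)$; it is convex and $\Gamma$-invariant, hence descends to $\bar f$ on $\Gamma \backslash X$. On $\mathrm{Thick}_\delta$, $\Gamma$ acts cocompactly, so $\bar f$ restricted to the image of $\mathrm{Thick}_\delta$ attains a max $m_\delta$; as $\delta \to 0$ these images exhaust $\Gamma \backslash X$ (since by slimness, Proposition \ref{prop-existence-fundamental-domain}, the bad locus $\bigcup_{g \ne \mathrm{id}} \mathrm{Fix}(g)$ is meager and one can push any point into $\mathrm{Thick}_\delta$ along a direction). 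If $\sup_\delta m_\delta < \infty$ then $f$ is bounded on all of $X$, but a bounded convex function on a geodesically complete CAT$(0)$-space is constant (extend any geodesic indefinitely; convexity plus boundedness forces it constant along every line), so $f \equiv 0$, i.e. $C = X$, contradiction. If $\sup_\delta m_\delta = \infty$, then there exist $\delta_n \to 0$, points $y_n \in \mathrm{Thick}_{\delta_n}(\Gamma, X)$ maximizing $f$ over (a fundamental domain of) $\mathrm{Thick}_{\delta_n}$, with $f(y_n) \to \infty$; at such a maximum, geodesic completeness lets us extend the geodesic $[\pi_C(y_n), y_n]$ past $y_n$ to $y_n'$ with $f(y_n') > f(y_n)$, and maximality forces $y_n' \notin \mathrm{Thick}_{\delta_n}(\Gamma,X)$ up to $\Gamma$, i.e. $\mathrm{sys}(\Gamma, y_n') < \delta_n$ — wait, that's allowed since $y_n'$ need not be in the thick part. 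The clean contradiction at the maximum: by convexity of $d(\cdot, C)$ along the extended line through $y_n$, and maximality of $f$ on the thick part modulo $\Gamma$, every point of a small ball around $y_n$ at distance $\geq f(y_n)$ from $C$ must have systole $< \delta_n$; iterating with discreteness and the volume lower bound \eqref{eq-volume-estimate} applied in a fundamental domain yields infinitely many disjoint fixed-volume balls mapping into the compact set $\Gamma \backslash \mathrm{Thick}_{\delta_0} $ for a fixed $\delta_0$, contradicting cocompactness there. Slimness enters precisely to guarantee (via Proposition \ref{prop-existence-fundamental-domain}) that fundamental domains exist so the volume count is legitimate, and to rule out the pathology where every point is stabilized (Example \ref{example-Lytchak}), where indeed minimality fails. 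The main obstacle is making the maximum-point / escaping-geodesic dichotomy rigorous without analyticity — exactly the innovation the introduction attributes to the choice of good minimal point — so I would expect the proof to invoke the same mechanism, possibly deferring part of it to the machinery developed for Theorem \ref{theo-alternative}.
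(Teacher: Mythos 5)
Your opening configuration is exactly the paper's: take a closed, convex, $\Gamma$-invariant $C\subsetneq X$, a point $x\in X\setminus C$ with projection $y\in C$, and extend $[y,x]$ beyond $x$ to a ray $c$ with $c(0)=y$. But you never isolate the one fact that makes everything afterwards short: since the nearest-point projection onto $C$ is $1$-Lipschitz and $g$-equivariant, $d_g(y)\le d_g(c(t))$ for every $t$ and every $g\in\Gamma$, and since $d_g$ is convex this forces $t\mapsto d_g(c(t))$ to be \emph{non-decreasing} along $c$. This monotonicity yields a clean dichotomy for each such ray. Either some $c(t_0)$ satisfies $\textup{sys}(\Gamma,c(t_0))\ge\delta>0$, in which case the whole tail $c([t_0,\infty))$ lies in $\textup{Thick}_\delta(\Gamma,X)$; since $d(\cdot,C)$ is $\Gamma$-invariant and tends to infinity along $c$, this tail cannot project into a compact subset of $\Gamma\backslash\textup{Thick}_\delta(\Gamma,X)$, contradicting almost-cocompactness outright --- no flat strips, no splitting, no volume counting. (Your version of this branch drifts into a flat-strip/splitting construction you do not close, even though your own remark that $d(g_nx_{t_n},C)=t_n\to\infty$ while $g_nx_{t_n}$ stays in a fixed compact set already contains the contradiction.) Or else, for every $t$ and every $\delta$ some non-trivial element displaces $c(t)$ by less than $\delta$; by discreteness the infimum is attained, so every point of the ray --- in particular $x$ itself --- is fixed by some element of $\Gamma^*$. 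Running this over all $x$ gives $X\setminus C\subseteq\bigcup_{g\in\Gamma^*}\text{Fix}(g)$, an open non-empty set covered by countably many closed sets which by slimness have empty interior: Baire gives the contradiction.

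That second alternative is the genuine gap in your proposal. You invoke slimness only to produce fundamental domains and legitimize volume counts (via Proposition \ref{prop-existence-fundamental-domain}), which is not its role here; its role is precisely that each $\text{Fix}(g)$ has empty interior so that Baire applies. Neither of the contradictions you attempt in the ``thin ray'' branch (volume growth of disjoint balls, maxima of $d(\cdot,C)$ on thick parts) is carried to completion, and you acknowledge as much. So while the starting move and the use of almost-cocompactness on the thick branch are on target, the argument as written does not close: the missing ideas are the monotonicity of $d_g$ along rays issuing from the projection point and the resulting ``thick tail versus everywhere-fixed'' dichotomy, resolved by almost-cocompactness on one side and by Baire plus slimness on the other.
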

\begin{proof}
	Suppose to have a closed, convex, $\Gamma$-invariant subset $C\subsetneq X$. Let $x\in X\setminus C$ and $y\in C$ be its projection on $C$. Consider the geodesic ray $[y,x]$ and extend it to a geodesic ray $c_x$. By $\Gamma$-invariance of $C$ we know that $d_g$ is non-decreasing along $c_x$ for all $g\in \Gamma$. Suppose first there exist $\delta > 0$ and a time $t_0 \geq 0$ such that $d(c_x(t_0), gc_x(t_0)) \geq \delta > 0$ for all $g\in \Gamma^*$. The same holds for all $t\geq t_0$, so there exists a whole geodesic ray which is contained in $\text{Thick}_\delta(\Gamma,X)$. Clearly this ray does not project to a compact subset of $\Gamma \backslash X$, a contradiction to the almost-cocompactness assumption. In the remaining case we have that the ray $c_x$ is entirely contained in $\bigcup_{g\in \Gamma^*}\text{Fix}(g)$, and this must happen for all possible choices of $x\in X\setminus C$. In other words $X\setminus C \subseteq \bigcup_{g\in \Gamma^*}\text{Fix}(g)$. Now, $X\setminus C$ is an open, non-empty subset of $X$ while each $\text{Fix}(g)$ is a closed subset with empty interior. The union is countable because $\Gamma$ is discrete, so by Baire Theorem $\bigcup_{g\in \Gamma^*}\text{Fix}(g)$ has empty interior, impossible.
\end{proof}

\section{Thin actions by almost-cocompact groups}
\label{sec:alternative}

From now on we fix $P_0,r_0 > 0$ and we call $\varepsilon_0,I_0,S_0$ the constants given by Theorem \ref{theorem-Margulis-nilpotent}.
For $\lambda \geq 0 $ we introduce the set $$\text{Min}_\lambda(\Gamma,X) : = \bigcup_{g\in \Gamma^* \textup{ : } \ell(g)\leq \lambda} \textup{Min}(g).$$
In particular when $\lambda = 0$ we denote the set $\text{Min}_\lambda(\Gamma,X)$ by $\text{Fix}(\Gamma,X)$, which is the set of points that are fixed by some non-trivial (and necessarely elliptic) isometry.
Observe that $\text{Min}_\lambda(\Gamma,X)$ contains $\text{Fix}(\Gamma,X)$ for every $\lambda \geq 0$. \\
We state here the version of the main theorem we will prove.
\begin{theo}
	\label{theo-alternative}
	Let $X$ be a complete, geodesically complete, \textup{CAT}$(0)$-space that is $P_0$-packed at scale $r_0$. Let $\Gamma$ be a discrete, almost-cocompact group of isometries of $X$ and let $0<\lambda \leq \varepsilon_0$. Assume that \textup{dias}$(\Gamma,X) <  \eta(P_0,r_0,\lambda) := \frac{\lambda}{(2I_0-1)(3\cdot 2^{S_0-1} - 2)}$, then $X=\textup{Min}_\lambda(\Gamma,X)$. 
\end{theo}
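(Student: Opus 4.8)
The plan is to prove the pointwise statement: for every $x\in X$ there is a non-trivial $g\in\Gamma$ with $\ell(g)\le\lambda$ and $d(x,gx)=\ell(g)$, i.e. $x\in\textup{Min}(g)$. It is convenient to encode this by the $\Gamma$-invariant function
\[
\Psi(y):=\min\bigl\{\, d(y,gy)-\ell(g)\ :\ g\in\Gamma^{*},\ d(y,gy)\le\lambda \,\bigr\},
\]
which is well defined (a minimum over a finite, non-empty family: finite by discreteness, non-empty because $\textup{sys}(\Gamma,y)\le\textup{dias}(\Gamma,X)<\eta<\lambda$), satisfies $\Psi\ge 0$, and has $\Psi^{-1}(0)=\textup{Min}_\lambda(\Gamma,X)$. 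So the statement is exactly $\Psi\equiv 0$.

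First I would carry out a purely algebraic extraction at a fixed point $x$. Since $\textup{sys}(\Gamma,x)<\eta\le\varepsilon_0$, the group $\overline{\Gamma}_\eta(x)$ is non-trivial and, by Theorem \ref{theorem-Margulis-nilpotent}, contains a normal subgroup $\Delta_x$ of index $\le I_0$ with a finite normal subgroup $\Lambda_x$ such that $\Delta_x/\Lambda_x$ is nilpotent of step $\le S_0$. A Reidemeister--Schreier computation, starting from the generating set $\overline{\Sigma}_\eta(x)$ of $\overline{\Gamma}_\eta(x)$ and a Schreier transversal for $\Delta_x$ of word length $<I_0$, yields a generating set of $\Delta_x$ made of elements $s$ of word length $\le 2I_0-1$ in $\overline{\Sigma}_\eta(x)$, hence with $d(x,sx)\le(2I_0-1)\eta=:\eta'$ --- this is where the factor $2I_0-1$ comes from. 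Feeding these generators into the nilpotent quotient $\Delta_x/\Lambda_x$ and taking iterated commutators (at most $S_0-1$ of them, using that $\gamma_{S_0}(\Delta_x/\Lambda_x)$ is central and, if non-trivial, is represented by a non-trivial element of $\Delta_x$), one obtains a non-trivial $g\in\Delta_x$ with $d(x,gx)\le\lambda$: the coefficient is controlled by the commutator estimate $d(x,[a,b]x)\le 2\,d(x,ax)+2\,d(x,bx)$, which propagates the recursion $f(s+1)=2f(s)+2$, $f(1)=1$, so that $f(S_0)\,\eta'=(3\cdot 2^{S_0-1}-2)(2I_0-1)\eta=\lambda$ exactly. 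Using Lemma \ref{lemma-BGS-center} and Propositions \ref{prop-commuting-isometries}--\ref{prop-fix-points-finite} one checks that $g$ can be taken \emph{semisimple} (if $\textup{Stab}_\Gamma(x)\neq\{1\}$ it is already elliptic and we are done; and if $\overline{\Gamma}_\eta(x)$ is purely parabolic one argues instead with the common fixed point at infinity, see below). In particular, for $x\notin\textup{Min}_\lambda(\Gamma,X)$ the witnesses $g$ in the definition of $\Psi(x)$ may be taken semisimple, so $\textup{Min}(g)\neq\emptyset$ and $\ell(g)<d(x,gx)\le\lambda$.

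The heart of the proof (the step the authors flag as the novel one) is to rule out $\Psi(x_0)>0$, and this is where almost-cocompactness enters. Given such an $x_0$ with a semisimple witness $g_0$, let $\pi_0$ be the projection of $x_0$ onto the closed convex set $\textup{Min}(g_0)$: flowing from $x_0$ toward $\pi_0$ decreases $d_{g_0}$ down to $\ell(g_0)$ and lands in $\textup{Min}_\lambda(\Gamma,X)$, but at $\pi_0\ne x_0$, so this naive move is useless. Instead one extends the geodesic $[\pi_0,x_0]$ beyond $x_0$ to a ray $c$ (geodesic completeness), along which $d_{g_0}$ is convex and non-decreasing; the goal is to choose $x_0$ and $g_0$ optimally --- this is the ``good minimal point for $\Psi$'' --- so that along $c$ no new isometry becomes shorter than a fixed $\delta>0$. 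Then $c$ is a geodesic ray contained in $\textup{Thick}_\delta(\Gamma,X)$ along which a displacement function is strictly monotone, and, exactly as in the proof of Proposition \ref{prop-minimal}, such a ray cannot project to a compact subset of $\Gamma\backslash X$ --- contradicting almost-cocompactness. Concretely, one minimizes $\Psi$ over a suitable $\Gamma$-invariant family and, among the finitely many witnesses at the minimizing point, selects the one whose minimal set is positioned so that the monotonicity along $c$ is preserved through the (finitely many) changes of witness; the purely parabolic sub-case is handled analogously, replacing $\textup{Min}(g_0)$ by a horosphere at the fixed point at infinity of $\overline{\Gamma}_\eta(x_0)$ (via Proposition \ref{prop-structure-virt-nilpotent}).

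The main obstacle is precisely this last step. The naive ``flow toward the minimal set'' relocates the point one wants to control, so the contradiction must instead come from a ray on which the witnessing isometries stay uniformly short and a natural functional is monotone; making sure that the systole does not collapse along this ray, and that the monotonicity survives the changes of witness, is exactly what requires the careful initial choice of $(x_0,g_0)$ and what replaces the analyticity or strict negative curvature hypotheses used in \cite{BGS13}, \cite{Gel11}, \cite{BGLS10}. Everything before it (the Margulis reduction, the Reidemeister--Schreier bound $2I_0-1$, and the commutator recursion giving $3\cdot 2^{S_0-1}-2$) is routine once the bookkeeping of constants is set up.
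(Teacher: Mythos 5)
Your algebraic extraction is essentially the paper's: Lemma \ref{lemma-finite-index-generators} gives generators of $\Delta_x$ of word length at most $2I_0-1$, and the iterated-commutator recursion produces a central element whose displacement is controlled by the factor $3\cdot 2^{S_0-1}-2$; that bookkeeping is correct. The problem is the geometric endgame, which you yourself identify as the heart of the matter but leave as a wish rather than an argument, and the mechanism you propose would not work as stated. You want to choose $(x_0,g_0)$ so that the ray $c$ extending $[\pi_0,x_0]$ beyond $x_0$ stays in $\textup{Thick}_\delta(\Gamma,X)$ and then contradict almost-cocompactness as in Proposition \ref{prop-minimal}. Two obstructions. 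First, $\textup{Min}(g_0)$ is invariant only under isometries normalizing $\langle g_0\rangle$, so along $c$ only $d_{g_0}$ is guaranteed to be non-decreasing; arbitrary elements of $\Gamma$ can become arbitrarily short along $c$, and saying that one ``selects the witness whose minimal set is positioned so that the monotonicity is preserved through the changes of witness'' is not a construction --- it is exactly the difficulty. Second, even granting a ray inside $\textup{Thick}_\delta$, the contradiction in Proposition \ref{prop-minimal} comes from the fact that the distance to the convex set there is $\Gamma$-invariant, hence descends to the quotient and is unbounded along the ray; $\textup{Min}(g_0)$ is not $\Gamma$-invariant, so your projected ray could perfectly well stay in a compact part of $\Gamma\backslash X$. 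Finally, your claim that the central element can always be taken semisimple is unsupported; the paper does not prove it, and instead treats a parabolic $g_0$ separately by intersecting sublevel sets $M_\tau(hg_0h^{-1})$ with $\ell(g_0)<\tau<d(x,g_0x)$.

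The paper's actual mechanism is different and is the missing idea. One minimizes the $\Gamma$-invariant functional $\Psi(x)=\sum_{g\in\overline{\Sigma}_{\eta}(x)\setminus\{\mathrm{id}\}}f(d(x,gx))$ (a penalty blowing up at $0$ and vanishing beyond $\eta$) over $X\setminus\textup{Min}_\lambda(\Gamma,X)$, and among the minimizers picks, via Zorn's lemma, a point $x$ maximal for the order $x\preceq y$ iff $\overline{\Sigma}_\eta(x)\subseteq\overline{\Sigma}_\eta(y)$. One then projects $x$ not onto $\textup{Min}(g_0)$ but onto $Y=\bigcap_{h\in\overline{\Gamma}_\eta(x)}\textup{Min}(hg_0h^{-1})$ (or the analogous intersection of sublevel sets, or of fixed-point sets of $\Lambda_x$), which is invariant under the \emph{whole} almost-stabilizer; consequently \emph{all} the displacement functions $d_{\sigma_i}$, $\sigma_i\in\overline{\Sigma}_\eta(x)$, are non-decreasing along the extended ray. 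No infinite thick ray is needed: one moves only the finite time $\bar t$ after which some $d_{\sigma_i}$ strictly exceeds its value at $x$, checks via Lemma \ref{lemma-basic-containment} and the $\preceq$-maximality that $\overline{\Sigma}_\eta$ is unchanged on $[x,c(\bar t)]$, and concludes that $\Psi(c(\bar t+t))<\Psi(x)$ for small $t>0$, contradicting minimality. Without this finite-displacement variational contradiction (or a genuine substitute for your thick-ray argument), the proof is incomplete.
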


%
%

Example \ref{example-Lytchak} in which dias$(\Gamma,X) = 0$ shows that if case (b) occurs the elements of the decomposition can be all elliptic. Notice that if in the above decomposition Min$(g)$ intersects Min$(g')$ then $\lbrace g,g' \rbrace$ generate a virtually nilpotent group (virtually abelian, if $\Gamma$ is cocompact), by Theorem \ref{theorem-Margulis-nilpotent}.\\  
To prove Theorem \ref{theo-alternative} we need a couple of preparatory  results. The first one is a group-theoretic result for finite index subgroups:  

\begin{lemma}[Lemma 3.4 of \cite{SW}]
	\label{lemma-finite-index-generators}
	Let $G$ be a group generated by a finite set $S$. Let $H$ be a subgroup of $G$ with finite index $I$. Then $H$ can be generated by a set $S'$ whose elements have   word length with respect to $S$ at most $(2I-1)$.
\end{lemma}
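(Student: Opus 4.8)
The statement is the classical Schreier generation bound with explicit length control, so the plan is to construct an explicit Schreier transversal for $H$ in $G$ and read off both the generating set and the length estimate from it. First I would pass to the right coset (Schreier) graph of $H$ in $G$ with respect to $S$: its vertices are the $I$ right cosets $Hg$, with an edge labelled by $s$ from $Hg$ to $Hgs$ for each $s\in S$, and it is connected because $G=\langle S\rangle$. Choosing a spanning tree rooted at the coset $H=H\cdot 1$ and reading the labels along the unique tree-path from the root to each vertex $Hg$ produces, for every coset, a word $w_{Hg}$ in $S\cup S^{-1}$ with $Hw_{Hg}=Hg$. I take $T=\{\,w_{Hg}: Hg\in H\backslash G\,\}$ as the transversal, write $\overline{g}\in T$ for the representative of $Hg$, and normalise $\overline{1}=1$. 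Since the tree has exactly $I$ vertices, every root-to-vertex path has length at most $I-1$, whence $\ell_S(\overline{g})\le I-1$ for all $g$; moreover $T$ is prefix-closed, i.e.\ a Schreier transversal. (Here, as usual, word length is measured with respect to $S\cup S^{-1}$.)

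Next I would run the Schreier generation argument for the set
$$ S' := \{\, t\,s\,\overline{ts}^{\,-1} \;:\; t\in T,\ s\in S \,\}, $$
each element of which lies in $H$ because $H(ts)=H\overline{ts}$. For the generation, given $h\in H$ I write $h=s_1\cdots s_n$ with $s_i\in S\cup S^{-1}$, set $t_0=1$ and $t_i=\overline{t_{i-1}s_i}$, and telescope
$$ h=(t_0 s_1 t_1^{-1})(t_1 s_2 t_2^{-1})\cdots (t_{n-1} s_n t_n^{-1})\,t_n. $$
Since $h\in H$ forces $t_n=\overline{h}=1$, this exhibits $h$ as a product of the factors $t_{i-1}s_i t_i^{-1}$; each such factor is an element of $S'$ when $s_i\in S$, and the inverse of an element of $S'$ when $s_i\in S^{-1}$. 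Thus $H=\langle S'\rangle$.

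Finally the length bound is immediate from the transversal estimate: for any generator one has
$$ \ell_S\!\left(t\,s\,\overline{ts}^{\,-1}\right)\le \ell_S(t)+\ell_S(s)+\ell_S(\overline{ts})\le (I-1)+1+(I-1)=2I-1, $$
so every element of $S'$ has $S$-word length at most $2I-1$, as claimed. I expect the only delicate point to be the bookkeeping in the telescoping step when a syllable $s_i$ lies in $S^{-1}$ rather than $S$: writing $s_i=s^{-1}$ with $s\in S$ and $t_i=\overline{t_{i-1}s^{-1}}$, one checks $\overline{t_i s}=\overline{t_{i-1}s^{-1}s}=\overline{t_{i-1}}=t_{i-1}$, so that $t_i\,s\,t_{i-1}^{-1}\in S'$ and its inverse is exactly $t_{i-1}s_i t_i^{-1}$; this identity, which relies on the prefix-closedness of the Schreier transversal, is what matches the inverse case to $S'$, and the remainder of the argument is routine.
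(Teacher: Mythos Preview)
Your argument is correct: it is the standard Reidemeister--Schreier construction with the length bound read off from a spanning tree of the coset graph. The telescoping and the handling of inverse letters are fine; the only cosmetic remark is that the identity $\overline{t_{i-1}}=t_{i-1}$ you use for the $s_i\in S^{-1}$ case needs only that $t_{i-1}\in T$, not prefix-closedness per se.

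Note, however, that the paper does not give its own proof of this lemma at all: it is simply quoted as Lemma~3.4 of \cite{SW} and used as a black box in Step~4 of the proof of Theorem~\ref{theo-alternative}. So there is nothing in the paper to compare your approach against; your write-up supplies exactly the classical justification one would expect behind the citation.
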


\vspace{-2mm}
\noindent (We recall that, in the above setting,  the  word length with respect to $S$ of an element $g\in G$ is the infimum of the natural numbers $n$ such that $g$ can be written as a product of $n$ elements of $S$.)

\begin{lemma}
	\label{lemma-basic-containment}
	For every $x\in X$ and $r > 0$ there exists an open set $U \ni x$ such that $\Sigma_\eta(y)\supseteq \Sigma_\eta(x)$ and $\overline{\Sigma}_\eta(y)\subseteq \overline{\Sigma}_\eta(x)$ for all $y\in U$.
\end{lemma}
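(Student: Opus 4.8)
The plan is to exploit the fact that the displacement functions $d_g(z) = d(z, gz)$ are continuous in $z$ (in fact $1$-Lipschitz, being a difference of $1$-Lipschitz functions composed with the isometry $g$), together with the discreteness of $\Gamma$, which keeps the relevant sets of group elements finite. First I would fix $x \in X$ and $r > 0$ (the radius $r$ is only used to pin down a bounded neighbourhood to work in, so that only finitely many group elements are ever relevant). Consider the finite set $\overline{\Sigma}_{\eta + r}(x) = \{ g \in \Gamma : d(x, gx) \le \eta + r\}$, which is finite by discreteness of $\Gamma$; every element of $\overline{\Sigma}_\eta(y)$ for $y \in B(x, r/2)$ lies in this set, since $d(x, gx) \le d(x,y) + d(y, gy) + d(gy, gx) \le r + \eta$. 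Thus all the "action" happens among finitely many isometries, and the problem reduces to controlling finitely many continuous functions.

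The two containments are then handled separately. For $\Sigma_\eta(y) \supseteq \Sigma_\eta(x)$: if $g \in \Sigma_\eta(x)$ then $d_g(x) < \eta$, so by continuity of $d_g$ there is an open neighbourhood $U_g \ni x$ on which $d_g < \eta$, i.e. $g \in \Sigma_\eta(y)$ for all $y \in U_g$; intersecting the finitely many $U_g$ over $g \in \Sigma_\eta(x)$ (a finite set, again by discreteness) gives an open set on which $\Sigma_\eta(\cdot)$ contains $\Sigma_\eta(x)$. For $\overline{\Sigma}_\eta(y) \subseteq \overline{\Sigma}_\eta(x)$: I want to rule out, for $y$ close to $x$, the existence of $g$ with $d_g(y) \le \eta$ but $d_g(x) > \eta$. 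The elements $g$ that could cause trouble all lie in the finite set $\overline{\Sigma}_{\eta+r}(x) \setminus \overline{\Sigma}_\eta(x)$; for each such $g$ we have $d_g(x) > \eta$, hence $d_g > \eta$ on an open neighbourhood $V_g \ni x$, so $g \notin \overline{\Sigma}_\eta(y)$ for $y \in V_g$. Intersecting these finitely many $V_g$ with the neighbourhoods from the first part and with $B(x, r/2)$ yields the desired open set $U$.

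The main obstacle — really the only subtlety — is ensuring the sets of group elements one quantifies over are genuinely finite, so that finite intersections of open sets remain open; this is exactly where discreteness of $\Gamma$ enters, and it is why restricting attention to $g \in \overline{\Sigma}_{\eta+r}(x)$ (rather than to all of $\Gamma$) is essential. Once that bookkeeping is in place, everything is a routine continuity argument and no CAT$(0)$ geometry beyond the Lipschitz continuity of $z \mapsto d(z,gz)$ is needed.
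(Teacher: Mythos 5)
Your proof is correct and follows essentially the same strategy as the paper's: discreteness of $\Gamma$ reduces everything to finitely many elements, and continuity of the displacement functions $d_g$ then gives the two containments on a small ball. The only cosmetic difference is in the second containment, where the paper argues by contradiction with a convergent sequence and a constant subsequence of group elements, while you argue directly by excluding each element of the finite set $\overline{\Sigma}_{\eta+r}(x)\setminus\overline{\Sigma}_\eta(x)$ on an open neighbourhood; both are valid.
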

\begin{proof}
	By discreteness $\Sigma_\eta(x) = \lbrace g_1,\ldots, g_k\rbrace$ and $d(x,g_ix)<\eta$ for each $i=1,\ldots, k$. Hence there exists $\varepsilon > 0$ such that $d(x,g_ix)<\eta - 2\varepsilon$ for every $i = 1,\ldots, k$. So we have $\Sigma_\eta(y)\supseteq \Sigma_\eta(x)$ for every point $y\in B(x,\varepsilon)$.\\
	In order to prove the second part we suppose the opposite: there is a sequence $x_n$ converging to $x$ such that $\overline{\Sigma}_\eta(x_n)\supset \overline{\Sigma}_\eta(x)$ for every $n$: in particular there is $g_n \in \Gamma$ such that $d(x_n,g_nx_n)\leq \eta$ but $d(x,g_nx)>\eta$. By discreteness the sets of possible $g_n$'s is finite, so we can take a subsequence where $g_n$ is constantly equal to a fixed $g\in \Gamma$. Therefore by continuity $d(x,gx) = \lim_{n\to +\infty} d(x_n, g x_n) \leq \eta$, which is a contradiction.
\end{proof}

We are ready to prove Theorem \ref{theo-alternative}. We present it in the cocompact case and we will see after the proof how to do the general case.
\begin{proof}[Proof of Theorem \ref{theo-alternative}]
	We suppose $X\neq \text{Min}_\lambda(\Gamma,X)$. Our aim is to find a point $x\in X$ such that sys$(\Gamma,x)\geq \eta = \eta(P_0,r_0,\lambda)$.
	Let $f\colon [0,+\infty) \to [0,+\infty)$ be a continuous, decreasing function such that $\lim_{t\to 0}f(t) = +\infty$ and $f(t)=0$ for all $t\geq \eta$. We define the map $\Psi \colon X\setminus \text{Min}_\lambda(\Gamma,X) \to [0,+\infty)$ by
	\begin{equation}
		\label{Psi}
		\Psi(x) = \sum_{g\in \Sigma_{\eta}(x) \setminus \lbrace \text{id} \rbrace}f(d(x,gx)) = \sum_{g\in \overline{\Sigma}_{\eta}(x)\setminus \lbrace \text{id} \rbrace}f(d(x,gx)),
	\end{equation}
	where the sum is equal to $0$ if $\Sigma_{\eta}(x) = \lbrace \text{id} \rbrace$.
	This map is well defined on every point $x$ which does not belong to $\text{Fix}(\Gamma,X)$, in particular it is well defined outside $\text{Min}_\lambda(\Gamma,X)$. In order to get the thesis it is enough to show that $\min\Psi = 0$. Indeed if a point $x\in X\setminus \text{Min}_\lambda(\Gamma,X)$ satisfies $\Psi(x)=0$ then sys$(\Gamma,x)\geq \eta$.
	\vspace{3mm}
	
	\noindent \textbf{Step 1: $\Psi$ is continuous and $\Gamma$-invariant.} \\
	The $\Gamma$-invariance is obvious. Let us take a sequence $x_n$ converging to $x_\infty$. By Lemma \ref{lemma-basic-containment} we have $\Sigma_{\eta}(x_n) \supseteq \Sigma_{\eta}(x_\infty)$ and $\overline{\Sigma}_{\eta}(x_n) \subseteq \overline{\Sigma}_{\eta}(x_\infty)$, for $n$ big enough. Therefore, using the two expressions of $\Psi$ in \eqref{Psi}:
	\begin{equation*}
		\begin{aligned}
			\liminf_{n\to + \infty} \Psi(x_n) &= \liminf_{n\to + \infty} \sum_{g\in \Sigma_{\eta}(x_n) \setminus \lbrace \text{id} \rbrace}f(d(x_n,gx_n))\\ &\geq \liminf_{n\to + \infty} \sum_{g\in \Sigma_{\eta}(x_\infty) \setminus \lbrace \text{id} \rbrace}f(d(x_n,gx_n)) \\
			&= \sum_{g\in \Sigma_{\eta}(x_\infty) \setminus \lbrace \text{id} \rbrace}f(d(x_\infty,gx_\infty)) = \Psi(x_\infty)
		\end{aligned}
	\end{equation*}
	and
	\begin{equation*}
		\begin{aligned}
			\limsup_{n\to + \infty} \Psi(x_n) &= \limsup_{n\to + \infty} \sum_{g\in \overline{\Sigma}_{\eta}(x_n) \setminus \lbrace \text{id} \rbrace}f(d(x_n,gx_n)) \\
			&\leq \limsup_{n\to + \infty} \sum_{g\in \overline{\Sigma}_{\eta}(x_\infty) \setminus \lbrace \text{id} \rbrace}f(d(x_n,gx_n)) \\
			&= \sum_{g\in \overline{\Sigma}_{\eta}(x_\infty) \setminus \lbrace \text{id} \rbrace}f(d(x_\infty,gx_\infty)) = \Psi(x_\infty).
		\end{aligned}
	\end{equation*}\\
		\textbf{Step 2: $\Psi$ has a minimum.} \\
		Let us take a sequence $x_n \in X\setminus \text{Min}_\lambda(\Gamma,X)$ such that $\Psi(x_n) \to \inf \Psi$. By cocompactness of the action we can suppose that $x_n$ converges to $x_\infty$ and, since $\Psi$ is continuous, $\Psi(x_\infty) = \inf \Psi$.
	\vspace{2mm}
	
	In the next step we will find a good point realizing the minimum of $\Psi$. In order to do that we introduce the partial order $\preceq$ on $\Psi^{-1}(m)$, $m\in [0,+\infty)$, defined by $x \preceq y$ if and only if $\overline{\Sigma}_{\eta}(x) \subseteq \overline{\Sigma}_{\eta}(y)$.
	\vspace{3mm}
	
	\noindent \textbf{Step 3: the mininum of $\Psi$ is realized at a point $x$  maximal for $\preceq$.} \\
	Set $m := \min\Psi$. We consider the set $\Psi^{-1}(m)$ with the partial order $\preceq$. We claim that every chain has a maximal element. Let us fix a chain $x_1 \preceq x_2 \preceq \ldots$. The function $x\mapsto \#\overline{\Sigma}_\eta(x)$ is upper semicontinuous by Lemma \ref{lemma-basic-containment} and $\Gamma$-invariant, so it has a maximum on $X$. Moreover $\#\overline{\Sigma}(x_1) \leq \#\overline{\Sigma}(x_2) \leq \ldots$, so this sequence must stabilize, i.e. there exists $\bar{n}\in \mathbb{N}$ such that $\overline{\Sigma}(x_{\bar{n}}) = \overline{\Sigma}(x_n)$ for all $n\geq \bar{n}$. This means that $x_{\bar{n}}$ is a maximizing element of the chain. By Zorn's Lemma there exists a point $x$ such that $\Psi(x) = m$ and which is maximal for $\preceq$.
	\vspace{2mm}
	
	Our aim is to show that $m = 0$, so we suppose $m > 0$ and we look for a contradiction. Indeed we will find a point $y\in X\setminus \text{Min}_\lambda(\Gamma,X)$ such that $\Psi(y) < m$. Since $m > 0$ the set $\overline{\Sigma}_{\eta}(x)$ contains some non-trivial element. We write $\overline{\Sigma}_{\eta}(x) = \lbrace \sigma_1,\ldots,\sigma_n\rbrace$ and we consider the group $\overline{\Gamma}_\eta(x) = \langle \overline{\Sigma}_{\eta}(x)\rangle$. 
	\vspace{-1mm}
	
	\noindent \textbf{Step 4: there exists a non-empty, closed, convex, $\overline{\Gamma}_\eta(x)$-invariant subset $Y$ of $X$ not containing $x$.} \\
	By Theorem \ref{theorem-Margulis-nilpotent} there are groups $\Lambda_x, \Delta_x \vartriangleleft \overline{\Gamma}_\eta(x)$ such that:
	\begin{itemize}
		\item[(i)] $[\overline{\Gamma}_\eta(x) : \Delta_x] \leq I_0$;
		\item[(ii)] $\Lambda_x$ is a finite subgroup of $\Delta_x$;
		\item[(iii)] $\Delta_x / \Lambda_x$ is nilpotent of step $\leq  S_0$. 
	\end{itemize}
	Here we divide the proof in two cases: (a) $\Lambda_x \neq \lbrace \text{id}\rbrace$ and (b) $\Lambda_x = \text{id}$ and hence $\Delta_x$ is nilpotent of step $\leq S_0$. \\
	In case (a) we define the set $Y = \bigcap_{g\in \Lambda_x}\text{Min}(g)$. It is closed, convex and non-empty by Proposition \ref{prop-fix-points-finite} because $\Lambda_x$ is finite. Moreover it is $\overline{\Gamma}_\eta(x)$-invariant since $\Lambda_x$ is normal in $\overline{\Gamma}_\eta(x)$. Finally $x\notin Y$ because $x\notin \text{Fix}(\Gamma,X)$. \\
	In case (b) we know that the group $\Delta_x$ is nilpotent of step $\leq S_0$ and it has index at most $I_0$ in $\overline{\Gamma}_\eta(x)$. We apply Lemma \ref{lemma-finite-index-generators} to find a generating set $\Sigma'$ of $\Delta_x$ such that each $\sigma' \in \Sigma'$ has length at most $(2I_0 - 1)$ in the alphabet $\lbrace \sigma_1,\ldots,\sigma_n\rbrace$. Since the lower central series of $\Delta_x$ has length $S\leq S_0$ we can find elements $\sigma_1',\ldots,\sigma_S' \in \Sigma'$ such that $[\sigma_1', [\sigma_2', \cdots[\sigma_{S-1}', \sigma_S']]]$ is not trivial and belongs to the center of $\Delta_x$. We call $g_0$ such an element and we notice that its length in the alphabet $\lbrace \sigma_1,\ldots,\sigma_n\rbrace$ is at most $(2I_0-1)(3\cdot 2^{S_0-1} - 2)$.\\
	We consider the finite set $\text{Conj}_{\overline{\Gamma}_\eta(x)}(g_0) = \lbrace hg_0h^{-1} \text{ s.t. } h\in \overline{\Gamma}_\eta(x)\rbrace$. Since  the center of $\Delta_x$ is characteristic in $\Delta_x$ and $\Delta_x$ is normal in $\overline{\Gamma}_\eta(x)$, then the center of $\Delta_x$ is normal in $\overline{\Gamma}_\eta(x)$. Hence all the elements of Conj$_{\overline{\Gamma}_\eta(x)}(g_0)$ are  in the center of $\Delta_x$ and commute.
	The isometries of Conj$_{\overline{\Gamma}_\eta(x)}(g_0)$ are of the same type of $g_0$. If $g_0$ is either hyperbolic or elliptic we take $Y=\bigcap_{h\in \overline{\Gamma}_\eta(x)}\text{Min}(hg_0h^{-1})$. It is non-empty by Proposition \ref{prop-commuting-isometries}, convex, closed and clearly $\overline{\Gamma}_\eta(x)$-invariant. Moreover by triangle inequality we have
	$$d(x,g_0x) \leq (2I_0-1)(3\cdot 2^{S_0-1} - 2) \cdot \eta = \lambda.$$
	This shows that $\ell(g_0) \leq \lambda$, so by definition $x\notin \text{Min}(g_0)$ and in particular $x\notin Y$. If $g_0$ is parabolic we take $\tau$ such that $\ell(g_0) < \tau < d(x,g_0x)$ and we consider the set $Y=\bigcap_{h\in \overline{\Gamma}_\eta(x)}M_\tau(hg_0h^{-1})$. It is again convex, closed, non-empty by Proposition \ref{prop-commuting-isometries} and $\overline{\Gamma}_\eta(x)$-invariant. Clearly $x\notin Y$ by definition of $\tau$.
	\vspace{3mm}
	
	\noindent \textbf{Step 5: the minumum $m$ of $\Psi$  is 0.}\\
	Let $Y$ be the subset of the previous step. We consider the projection $y$ of $x$ on $Y$ and we extend the geodesic segment $[y,x]$ to a geodesic ray $c$ beyond $x$. Since $Y$ is $\overline{\Gamma}_\eta(x)$-invariant then $d_{\sigma_i}$ is a non-decreasing function along $c$ for every $i=1,\ldots,n$. Let us suppose first they are all constant along $c$. This means that $\sigma_i c$ is parallel to $c$ for every $i=1,\ldots,n$. 	
	Therefore $g c$ is parallel to $c$ for all $g\in \overline{\Gamma}_\eta(x)$.\\
	In case (a), when $\Lambda_x$ is not trivial, we have the stronger conclusion $gc = c$ for all $g\in \Lambda_x$. In particular $x$ is fixed by all the elements of $\Lambda_x$, which is in contradiction to the fact that $x\notin \text{Min}_\lambda(\Gamma,X) \supseteq \text{Fix}(\Gamma,X)$. \\
	In case (b), when $g_0$ is parabolic this situation cannot happen. Indeed by our choice of $\tau$ in the definition of $Y$ we see that $g_0 c$ cannot be parallel to $c$. When $g_0$ is hyperbolic	we conclude, since $Y$ is contained in $\text{Min}(g_0)$, that also $x$ should belong to $\text{Min}(g_0)$ which is impossible because $\ell(g_0) \leq \lambda$ and $x\notin \text{Min}_\lambda(\Gamma,X)$.\\
	Therefore the quantity
	$$\bar{t} = \max\lbrace t\in [0,+\infty) \text{ s.t. } d_{\sigma_i}(c(t)) \leq d_{\sigma_i}(x) \,\,\forall i=1,\ldots,n \rbrace$$
	is finite. We denote by $\bar{x}$ the point $c(\bar{t})$. We claim that $\overline{\Sigma}_{\eta}(\bar{x}) = \overline{\Sigma}_{\eta}(x)$ and consequently $\Psi(\bar{x}) = \Psi(x) = m$. Let us define the set $A=\lbrace y \in [x,\bar{x}] \text{ s.t. } \overline{\Sigma}_{\eta}(y) = \overline{\Sigma}_{\eta}(x)\rbrace$. By Lemma \ref{lemma-basic-containment} and by definition of $\bar{t}$ we conclude that $A$ is open. Indeed for $y'$ in a open neighbourhood $U$ of $y$ we have $\overline{\Sigma}_\eta(y') \subseteq \overline{\Sigma}_\eta(y) = \overline{\Sigma}_\eta(x)$. By definition of $\bar{t}$ we know that in $c \cap U$, which is an open set of $[x,\bar{x}]$ the other inclusion holds.	
	Furthermore $A$ is closed: indeed suppose to have a sequence of points $y_n\in A$ converging to $y$. Therefore $\overline{\Sigma}_{\eta}(y) \supseteq \overline{\Sigma}_{\eta}(y_n) = \overline{\Sigma}_{\eta}(x)$ for $n$ big enough, again by Lemma \ref{lemma-basic-containment}. But this containement is actually an equality, by the maximality of $x$ with respect to $\preceq$ implying $y\in A$. Indeed observe that by definition $\Psi(y_n)=\Psi(x)$ for every $n$ and by continuity of $\Psi$ the same holds for $y$. Now we can use the maximality of $x$ with respect to $\preceq$ to conclude the equality.
	So $A = [x,\bar{x}]$, i.e. $\overline{\Sigma}_{\eta}(\bar{x}) = \lbrace \sigma_1,\ldots,\sigma_n\rbrace$.\\
	By Lemma \ref{lemma-basic-containment} it holds $\overline{\Sigma}_{\eta}(\bar{x}) \supseteq \overline{\Sigma}_{\eta}(c(\bar{t}+t))$ for every $t > 0$ small enough. By definition of $\bar{x}$ for such $t$'s there is at least one index $i\in \lbrace 1,\ldots,n\rbrace$ such that $d(\sigma_i c(\bar{t}+t), c(\bar{t}+t)) > d(\sigma_i \bar{x}, \bar{x})$. So
	$$\Psi(c(\bar{t} + t)) \leq \sum_{i=1}^n f(d(\sigma_i c(\bar{t}+t), c(\bar{t}+t))) < \sum_{i=1}^n f(d(\sigma_i \bar{x}, \bar{x})) = \Psi(\bar{x}) = m.$$
	Observe that the sum is made among the same set of isometries because of the inclusion $\overline{\Sigma}_{\eta}(\bar{x}) \supseteq \overline{\Sigma}_{\eta}(c(\bar{t}+t))$.
	This contradicts the fact that $m$ is the minimum, concluding the proof.
\end{proof}

\begin{obs} The proof of Theorem \ref{theo-alternative} in the almost-cocompact case. \\
 {\em
We gave the proof in the cocompact case   to avoid obscuring the main arguments with other technical details. However, the {\em same} proof works for almost cocompact actions (and in particular for finite volume ones):  the only steps where we used cocompactness are Step 2 and Step 3. Step 2 can be replaced in the almost-cocompact case by the following argument:\\
-- take again a sequence $x_n$ such that $\Psi(x_n) \to \inf \Psi$. The sequence $\Psi(x_n)$ is bounded, therefore it exists $0<\delta<\eta$ such that $d(x_n, gx_n)\geq \delta$ for every $g\in \Sigma_{\eta}(x_n)$ and for every $n$;  \\
-- this means that \textup{sys}$(\Gamma, x_n) \geq \delta$ for every $n$, so all the $x_n$'s are contained in the $\delta$-thick part \textup{Thick}$_{\delta}(\Gamma,X)$ of $X$, which is cocompact by almost cocompactness;\\
-- then, we can find a subsequence, called again $x_n$, that converges to $x_\infty$ and, since $\Psi$ is continuous, $\Psi(x_\infty) = \inf \Psi$.\\
Step 3 can be replaced by a similar argument: the map $x\mapsto \overline{\Sigma}_\eta(x)$ is still upper semi-continuous and $\Gamma$-invariant. The set $\Psi^{-1}(m)$ is contained in a thick part of $X$, as said above. By almost cocompactness there exists $L$ such that $\overline{\Sigma}_\eta(x) \leq L$ for all $x\in \Psi^{-1}(m)$. The rest of the proof do not change.
 }
\end{obs}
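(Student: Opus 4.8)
The plan is to argue by contradiction. Supposing $X \neq \textup{Min}_\lambda(\Gamma,X)$, I would manufacture a point $x$ with $\textup{sys}(\Gamma,x) \geq \eta := \eta(P_0,r_0,\lambda)$, contradicting $\textup{dias}(\Gamma,X) < \eta$. The device for producing such a point is an ``energy'' function on $X \setminus \textup{Min}_\lambda(\Gamma,X)$. Fix a continuous decreasing $f \colon [0,\infty) \to [0,\infty)$ with $f(t) \to +\infty$ as $t \to 0$ and $f \equiv 0$ on $[\eta,\infty)$, and set $\Psi(x) = \sum_{g \in \overline{\Sigma}_\eta(x)\setminus\{\textup{id}\}} f(d(x,gx))$; since $f$ vanishes at $\eta$ this equals the sum over $\Sigma_\eta(x)\setminus\{\textup{id}\}$ as well, and $\Psi$ is well defined off $\textup{Fix}(\Gamma,X) \subseteq \textup{Min}_\lambda(\Gamma,X)$. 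Using the semicontinuity of $y \mapsto \Sigma_\eta(y)$ and $y \mapsto \overline{\Sigma}_\eta(y)$ from Lemma \ref{lemma-basic-containment} together with the two expressions for $\Psi$, one checks $\Psi$ is continuous and $\Gamma$-invariant. It then suffices to show $\inf\Psi = 0$, since a zero of $\Psi$ is a point of systole $\geq \eta$. Existence of a minimum uses almost-cocompactness: a minimizing sequence has bounded $\Psi$, hence lies in a single thick part $\textup{Thick}_\delta(\Gamma,X)$ on which $\Gamma$ acts cocompactly, so it subconverges after translation, and continuity finishes the step.

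The crucial new ingredient is the choice of which minimizer to use. On $\Psi^{-1}(m)$, $m := \min\Psi$, consider the preorder $x \preceq y \iff \overline{\Sigma}_\eta(x) \subseteq \overline{\Sigma}_\eta(y)$; since $y \mapsto \#\overline{\Sigma}_\eta(y)$ is upper semicontinuous, $\Gamma$-invariant and bounded on the relevant thick part, chains stabilize, and Zorn's Lemma produces a $\preceq$-maximal minimizer $x$. Assume for contradiction $m > 0$; then $\overline{\Sigma}_\eta(x) = \{\sigma_1,\dots,\sigma_n\}$ contains a nontrivial element, and we set $\overline{\Gamma}_\eta(x) = \langle\overline{\Sigma}_\eta(x)\rangle$. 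By Theorem \ref{theorem-Margulis-nilpotent} there are normal subgroups $\Lambda_x \leq \Delta_x \vartriangleleft \overline{\Gamma}_\eta(x)$ with $[\overline{\Gamma}_\eta(x):\Delta_x] \leq I_0$, $\Lambda_x$ finite, and $\Delta_x/\Lambda_x$ nilpotent of step $\leq S_0$. The target is a nonempty, closed, convex, $\overline{\Gamma}_\eta(x)$-invariant set $Y$ with $x \notin Y$. If $\Lambda_x \neq \{\textup{id}\}$, take $Y = \bigcap_{g\in\Lambda_x}\textup{Fix}(g)$: nonempty by Proposition \ref{prop-fix-points-finite}, closed and convex, $\overline{\Gamma}_\eta(x)$-invariant since $\Lambda_x$ is normal, and $x \notin Y$ since $x \notin \textup{Fix}(\Gamma,X)$. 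If $\Lambda_x = \{\textup{id}\}$, so $\Delta_x$ is nilpotent of step $S \leq S_0$, apply Lemma \ref{lemma-finite-index-generators} to generate $\Delta_x$ by words of length $\leq 2I_0-1$ in the $\sigma_i$; a suitable iterated commutator $g_0$ of $S$ of those generators is nontrivial, lies in the (characteristic, hence $\overline{\Gamma}_\eta(x)$-normal) center of $\Delta_x$, and has word length $\leq (2I_0-1)(3\cdot 2^{S_0-1}-2)$ in the $\sigma_i$; thus $d(x,g_0x) \leq (2I_0-1)(3\cdot 2^{S_0-1}-2)\,\eta = \lambda$, so $\ell(g_0) \leq \lambda$. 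All $\overline{\Gamma}_\eta(x)$-conjugates of $g_0$ are central in $\Delta_x$, hence commute and share its type; if $g_0$ is semisimple put $Y = \bigcap_h \textup{Min}(hg_0h^{-1})$ (nonempty by Proposition \ref{prop-commuting-isometries}), so that $Y \subseteq \textup{Min}(g_0)$ and $x \notin Y$ because $\ell(g_0) \leq \lambda$ while $x \notin \textup{Min}_\lambda(\Gamma,X)$; if $g_0$ is parabolic, replace $\textup{Min}$ by $M_\tau$ for some $\ell(g_0) < \tau < d(x,g_0x)$.

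To close the argument, let $y$ be the projection of $x$ on $Y$ and extend $[y,x]$ to a geodesic ray $c$ past $x$; by convexity and $\overline{\Gamma}_\eta(x)$-invariance of $Y$, each $d_{\sigma_i}$ is non-decreasing along $c$. If all were constant along $c$, then every $\sigma_i c$ — hence every $gc$, $g \in \overline{\Gamma}_\eta(x)$ — would be parallel to $c$, which is impossible in each case of the construction of $Y$ (if $\Lambda_x \neq \{\textup{id}\}$ one even gets $gc = c$ for $g \in \Lambda_x$, so $x \in \textup{Fix}(\Gamma,X)$; if $g_0$ is semisimple, parallelism of $g_0 c$ to $c$ forces $d_{g_0}(x) = d_{g_0}(y) = \ell(g_0)$, i.e. $x \in \textup{Min}(g_0)$; if $g_0$ is parabolic, $d_{g_0}$ constant $\leq \tau$ along $c$ contradicts $d(x,g_0x) > \tau$). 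Hence $\bar t := \max\{t : d_{\sigma_i}(c(t)) \leq d_{\sigma_i}(x)\ \forall i\}$ is finite; with $\bar x := c(\bar t)$, monotonicity forces each $d_{\sigma_i}$ to be constant on $[x,\bar x]$, in particular $\leq \eta$, so $\overline{\Sigma}_\eta(\bar x) \supseteq \{\sigma_1,\dots,\sigma_n\}$; combined with $\Psi(\bar x) \geq m$ and the $\preceq$-maximality of $x$ (via a short open–closed argument along $[x,\bar x]$ using Lemma \ref{lemma-basic-containment}) this gives $\overline{\Sigma}_\eta(\bar x) = \{\sigma_1,\dots,\sigma_n\}$ and $\Psi(\bar x) = m$. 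Finally, for small $t > 0$ Lemma \ref{lemma-basic-containment} gives $\overline{\Sigma}_\eta(c(\bar t+t)) \subseteq \{\sigma_1,\dots,\sigma_n\}$, while maximality of $\bar t$ forces some $d_{\sigma_i}(c(\bar t+t)) > d_{\sigma_i}(\bar x)$, so, $f$ being decreasing, $\Psi(c(\bar t+t)) \leq \sum_{i=1}^n f(d_{\sigma_i}(c(\bar t+t))) < \sum_{i=1}^n f(d_{\sigma_i}(\bar x)) = m$, contradicting minimality. The almost-cocompact case differs only in the two steps that produce the minimum and the $\preceq$-maximal point, handled by passing to a thick part as above.

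I expect the genuine difficulty to be concentrated in the middle: one must (i) secure a minimizer that is simultaneously $\preceq$-maximal; (ii) extract from the virtually nilpotent group $\overline{\Gamma}_\eta(x)$ a nontrivial \emph{central} semisimple or parabolic element whose displacement at $x$ is controlled \emph{linearly} by $\lambda$ — this is where Lemma \ref{lemma-finite-index-generators}, the commutator-length bookkeeping and the precise form of $\eta$ enter; and (iii) use $\preceq$-maximality a second time to show the almost-stabilizer does not lose elements along $c$ before reaching $\bar x$, which is exactly what makes $\Psi$ strictly decrease just past $\bar x$. Together (i)–(iii) replace the analyticity or strict negative curvature used in earlier treatments.
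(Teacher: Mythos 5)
Your proposal is correct and follows essentially the same route as the paper: the same energy function $\Psi$, the same partial order $\preceq$ with a Zorn's-Lemma-maximal minimizer, the same two-case construction of the invariant convex set $Y$ via a short central element $g_0$ obtained from Lemma \ref{lemma-finite-index-generators} and an iterated commutator, and the same final strict decrease of $\Psi$ past $\bar{x}$. In particular, your treatment of the almost-cocompact case (bounded $\Psi$ forces the minimizing sequence, and likewise $\Psi^{-1}(m)$, into a $\delta$-thick part on which the action is cocompact, so both the existence of the minimum and the boundedness of $x\mapsto\#\overline{\Sigma}_\eta(x)$ survive) is exactly the argument given in the paper's remark.
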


\begin{obs}
	\label{remark-union-interior} {\em
	An interesting, though simple, topological fact  is that when $X = \text{Min}_\lambda(\Gamma,X)$, since this family of closed subsets is locally finite then we can take the union only among those which have non-empty interior, as already observed in \cite{CCR01}, Proposition 0.2. 
	This is of particular interest for \emph{slim} actions of groups with torsion, for instance for actions on homology manifolds.}
\end{obs}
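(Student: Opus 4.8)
The plan is to prove the purely topological assertion underlying the remark: if a space is covered by a \emph{locally finite} family of closed sets, then the sub-family of those members with non-empty interior already covers it. The only geometric input is the local finiteness of the family $\{\text{Min}(g) : g\in\Gamma^*,\ \ell(g)\leq\lambda\}$, which I would establish first. Fix $x\in X$ and $R>0$; if $\text{Min}(g)$ meets $\overline{B}(x,R)$ at a point $p$, then since $g$ is an isometry the triangle inequality gives $d(x,gx)\leq d(x,p)+d(p,gp)+d(gp,gx)=2d(x,p)+\ell(g)\leq 2R+\lambda$, so $g\in\overline{\Sigma}_{2R+\lambda}(x)$. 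As $\Gamma$ is discrete this set is finite, whence only finitely many of the sets $\text{Min}(g)$ intersect $\overline{B}(x,R)$. Thus the family is locally finite, and in particular any sub-union of it is closed.

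Next I would split the index set according to interior. Write $G:=\bigcup\{\text{Min}(g):\ell(g)\leq\lambda,\ \text{int}\,\text{Min}(g)\neq\emptyset\}$ and $B:=\bigcup\{\text{Min}(g):\ell(g)\leq\lambda,\ \text{int}\,\text{Min}(g)=\emptyset\}$, so that $X=\text{Min}_\lambda(\Gamma,X)=G\cup B$ with both $G$ and $B$ closed by local finiteness. The key claim is that $B$ has empty interior. Suppose instead that $B$ contains a non-empty open set $V$, and pick $p\in V$; by local finiteness there is an open neighbourhood $W\ni p$ meeting only finitely many members, say $\text{Min}(g_1),\ldots,\text{Min}(g_k)$, all with empty interior. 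Then the non-empty open set $V\cap W$ is contained in the finite union $\text{Min}(g_1)\cup\cdots\cup\text{Min}(g_k)$ of closed, nowhere-dense sets, which is itself nowhere dense; this contradicts $V\cap W$ being open and non-empty. Hence $\text{int}(B)=\emptyset$.

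Finally I would conclude: since $X=G\cup B$ with $G$ closed, were $G\neq X$ the complement $X\setminus G$ would be a non-empty open subset contained in $B$, contradicting $\text{int}(B)=\emptyset$; therefore $G=X$, which is exactly the asserted reduction to members of non-empty interior. The argument is elementary and self-contained — it uses only that a finite union of closed nowhere-dense sets is nowhere dense, not the full Baire property — so I expect no genuine obstacle beyond correctly verifying local finiteness; the one mild subtlety is that distinct group elements may share the same minimal set, but this does not affect the finiteness count. For the \emph{slim} actions highlighted in the statement (torsion-free groups, or groups acting on homology manifolds via Lemma \ref{lemma-class-manifolds}), each non-trivial elliptic element has $\text{Fix}(g)=\text{Min}(g)$ of empty interior, so the surviving members of non-empty interior are minimal sets $\text{Min}(g)=D(g)\times\mathbb{R}$ of hyperbolic elements; this is what makes the reduction informative, linking the covering \eqref{UMin} to the genuine product pieces that feed the later structural results.
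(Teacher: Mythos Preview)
Your argument is correct and complete. The paper itself does not supply a proof of this remark: it simply records the fact as ``simple'' and cites \cite{CCR01}, Proposition~0.2, so there is no in-paper argument to compare against. Your verification of local finiteness (via $g\in\overline{\Sigma}_{2R+\lambda}(x)$ whenever $\text{Min}(g)$ meets $\overline{B}(x,R)$) is exactly the observation the paper later makes at the start of the proof of Lemma~\ref{lemma-splitting-cyclic}, and the subsequent nowhere-dense reduction is the standard one. One cosmetic point: when you write that the neighbourhood $W$ meets finitely many members ``all with empty interior'', this is not literally what local finiteness gives---$W$ may also meet members with non-empty interior---but since $V\cap W\subseteq B$, only the empty-interior members are relevant to covering $V\cap W$, and the contradiction stands unchanged.
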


\section{Splitting and rigidity of CAT$(0)$ spaces under thin, slim actions}

Our aim is to find several rigidity results for spaces with small diastole. Let $I_0(P_0,r_0)$ be the constant given by Theorem \ref{theorem-Margulis-nilpotent} and let $J_0$ be the constant given by Bieberbach's Theorem in dimension 2: every discrete, cocompact group of isometries of $\mathbb{R}^2$ contains a lattice of index at most $J_0$.
We set 
\begin{equation}
	\label{eq-defin-lambda0}
	\lambda_0 = \min \left\lbrace\varepsilon_0, \frac{4r_0}{\max\lbrace I_0, J_0 \rbrace \cdot(P_0 + 1)}\right\rbrace
\end{equation}
and we call $\eta_0 = \eta_0(P_0,r_0,\lambda_0) =\eta_0(P_0,r_0) > 0$ the associated constant given by Theorem \ref{theo-alternative}.
Let $X$ be a complete, geodesically complete, CAT$(0)$-space which is $P_0$-packed at scale $r_0$ and let $\Gamma$ be a discrete, almost-cocompact group of isometries of $X$. We denote by $\Lambda$ the set of hyperbolic isometries $g$ of $\Gamma$ with $\ell(g)\leq \lambda_0$ and such that $\text{Min}(g)$ has non-empty interior. Clearly $h\Lambda h^{-1} = \Lambda$ for all $h\in \Gamma$. Theorem \ref{theo-alternative} and Remark \ref{remark-union-interior} implies that if the diastole is smaller than $\eta_0$ and the group is slim then $X=\bigcup_{g \in \Lambda}\text{Min}(g)$.

\vspace{1mm}

We start studying spaces whit points of dimension $1$:

\begin{prop}
	\label{prop-dimension-1}
	Let $X$ be a complete, geodesically complete, \textup{CAT}$(0)$-space which is $P_0$-packed at scale $r_0$ and  suppose that $X$ has a point of dimension $1$. 
	 Let $\Gamma$ be a discrete, slim, almost-cocompact  group of isometries of $X$.  If $\textup{dias}(\Gamma , X) < \eta_0$ then  $X$ is isometric to $\mathbb{R}$.

\end{prop}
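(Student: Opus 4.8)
\textbf{Proof plan for Proposition \ref{prop-dimension-1}.}

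The plan is to use Theorem \ref{theo-alternative} (via its slim refinement recorded just before the statement) to write $X$ as a locally finite union of the minimal sets $\textup{Min}(g)$, $g\in\Lambda$, each of which is a convex set with non-empty interior of the form $D(g)\times\mathbb R$. First I would observe that, since $X$ has a point $x_0$ of dimension $1$, the open dense stratum $M_1$ is non-empty and every small ball around a point of $M_1$ is a $1$-dimensional Lipschitz manifold; in a geodesically complete CAT$(0)$-space this forces such a ball to be an open interval of a geodesic line, so locally near $x_0$ the space looks like $\mathbb R$. The key structural input is that the geometric/topological dimension of $X$ equals the supremum of the dimensions of its points, and I claim this supremum must be $1$: if some point had dimension $\geq 2$, then the stratum $M_k$ with $k\geq 2$ would be open and non-empty, hence (being dense in $X_k$ and $X$ being covered by the $\textup{Min}(g)$'s which are locally finite) it would meet the interior of some $\textup{Min}(g)=D(g)\times\mathbb R$; but a neighbourhood of a dimension-$1$ point is an interval, and the closure/convexity structure of a geodesically complete CAT$(0)$-space does not allow an open set of dimension $1$ and an open set of dimension $\geq 2$ to coexist in an indecomposable way — more precisely I would argue via connectedness of $X$ together with the fact that the function $x\mapsto n_x$ is lower semicontinuous on the dense manifold part and the presence of a dimension-$1$ point near which $X$ is a line propagates along geodesics through the convex sets $\textup{Min}(g)$. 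So $X$ has dimension $1$, i.e. $X$ is a metric tree.

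Next, $X$ being a geodesically complete CAT$(0)$ tree that is $P_0$-packed at scale $r_0$, it has uniformly bounded valence, and it carries the discrete, slim, almost-cocompact group $\Gamma$ with $X=\bigcup_{g\in\Lambda}\textup{Min}(g)$. For a hyperbolic isometry $g$ of a tree, $\textup{Min}(g)$ is its axis, a single geodesic line, and it has empty interior unless $X$ itself is that line. Hence $\Lambda\neq\emptyset$ forces $\textup{Min}(g)$ to have non-empty interior for some $g$, which in a tree can only happen if $X=\textup{Min}(g)\cong\mathbb R$. This is essentially where Remark \ref{remark-union-interior} does the work: the decomposition is not merely "$X$ is a union of axes" (which is false for, e.g., the tree of $\mathbb R^2$'s example in the introduction) but "$X$ is a union of axes each of which is open", and in a tree an open axis is the whole space.

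The main obstacle I expect is the dimension-reduction step: carefully ruling out that $X$ is a genuine $\geq 2$-dimensional space that merely happens to contain a dimension-$1$ point (e.g. a "spine" sticking out of a higher-dimensional piece, as in the last example of the introduction). The point is that in that example the group is torsion-free but \emph{not} able to satisfy the hypotheses of Theorem \ref{teor-dias}; here the extra leverage is precisely the equality $X=\bigcup_{g\in\Lambda}\textup{Min}(g)$ with all pieces $D(g)\times\mathbb R$ having interior. I would exploit that a dimension-$1$ point $x_0$ has an interval neighbourhood, so near $x_0$ the space splits an $\mathbb R$ factor in a trivial way, and then propagate: $x_0$ lies in some $\textup{Min}(g_0)=D(g_0)\times\mathbb R$; the $\mathbb R$-factor of this splitting must agree locally with the geodesic through $x_0$, forcing $D(g_0)$ to be $0$-dimensional near the relevant point, hence (by geodesic completeness and convexity of $D(g_0)$, together with the packing-bounded dimension) $D(g_0)$ is a point and $\textup{Min}(g_0)\cong\mathbb R$; finally, since $\textup{Min}(g_0)$ is open and closed in the connected space $X$, we get $X=\textup{Min}(g_0)\cong\mathbb R$. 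Checking that $D(g_0)$ is forced to be a single point — i.e. that one cannot have $\textup{Min}(g_0)=D(g_0)\times\mathbb R$ with $D(g_0)$ higher-dimensional yet still containing, in its interior as a subset of $X$, a dimension-$1$ point of $X$ — is the delicate local argument, and it uses that the dimension of a point of a product is the sum of the dimensions of the factor-points.
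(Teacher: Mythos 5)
Your opening steps coincide with the paper's: invoke the decomposition $X=\bigcup_{g\in\Lambda}\textup{Min}(g)$, pick a point $x$ of dimension $1$ lying in some $\textup{Min}(g)=D(g)\times\mathbb{R}$, and use additivity of the local dimension under products together with convexity (hence connectedness) of $D(g)$ to get that $D(g)$ is a single point, so $\textup{Min}(g)\cong\mathbb{R}$. This part is fine. (Your preliminary attempt to first prove that \emph{all} of $X$ has dimension $1$ by ``semicontinuity and propagation along geodesics'' is vague and not needed; the paper never establishes pure-dimensionality as an intermediate step, and indeed it is a conclusion, not an input, of these results.)

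The genuine gap is the final step: you pass from $\textup{Min}(g)\cong\mathbb{R}$ to $X=\textup{Min}(g)$ by asserting that $\textup{Min}(g)$ is ``open and closed in the connected space $X$''. But $\textup{Min}(g)$ is only known to have \emph{non-empty interior}, which is far weaker than being open: a closed convex line can contain an open interval of $X$ around the dimension-$1$ point and still run through higher-dimensional points where it is not a neighbourhood of itself. The paper's own example of the universal cover of a flat torus with a loop attached shows exactly this failure mode: the axis of the loop element is a line whose interior in $X$ is non-empty (the open arcs) yet it is a proper closed subset, so the ``clopen'' argument would prove a false statement there. The paper closes this gap with a quantitative packing argument, and this is precisely where the bound $\lambda_0\leq\frac{4r_0}{\max\{I_0,J_0\}(P_0+1)}$ from \eqref{eq-defin-lambda0} is consumed: assuming $X\neq\textup{Min}(g)$, take $x'$ at distance $r_0$ from $\textup{Min}(g)$ with projection $y$; some point of the orbit $\langle g\rangle x$ lies on $[y,gy]$ and is a dimension-$1$ point whose space of directions has only two elements, so every geodesic $[g^nx',g^mx']$ with $n\neq m$ is forced to pass through the axis and hence has length at least $2r_0$; since $d(g^nx',y)\leq r_0+|n|\,\ell(g)\leq r_0+|n|\frac{4r_0}{P_0+1}$, one obtains $P_0+1$ pairwise $2r_0$-separated points in $\overline{B}(y,3r_0)$, contradicting the packing hypothesis. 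Without an argument of this kind (or some other mechanism ruling out that the line $\textup{Min}(g)$ exits the dimension-$1$ locus), your proof does not go through.
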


\begin{proof}
	We have $X = \bigcup_{g\in \Lambda}\text{Min}(g).$
	Since $X$ has points of dimension $1$ then there is an open subset $U\subseteq X$ such that every point of $U$ has dimension $1$. In particular we can find a point $x\in X$ such that $\overline{B}(x,\varepsilon)$ is isometric to an interval for some $\varepsilon > 0$, by the discussion of Section \ref{subsec-cat(0)}. We take $g\in \Lambda$ such that $x\in \text{Min}(g)$. We can identify $x$ with a point $(y,0) \in D(g)\times \mathbb{R}$. Since $x$ is of dimension $1$ we conclude that $(y,0)$ is a point of dimension $1$, and therefore that $y$ is a point of dimension $0$: an isolated point. Since $D(g)$ is convex,  and in particular connected, we conclude that $D(g)$ is a point. Hence Min$(g)$ is isometric to $\mathbb{R}$.
	Suppose $X\neq \text{Min}(g)$. Then we can find a point $x'$ at distance $r_0$ from $\text{Min}(g)$. We consider the projection $y$ of $x'$ on $\text{Min}(g)$. We focus on the segment $[y, gy]$. Clearly some element in the orbit of $\langle g \rangle x$ must lie inside this segment. Therefore we can find a point, called again $x$, such that $x\in [y,gy]$ and $\overline{B}(x,\varepsilon)$ is isometric to an interval. We claim that the geodesic $[x',gx']$ has length at least $2r_0$. Let $c\colon [0,d(x',gx')] \to X$ be the geodesic $[x',gx']$. For every $t\in [0,d(x',gx')]$ we consider the geodesic $[c(t), x]$. There are only three cases: $[c(t)]_x = v_-$, $[c(t)]_x = v_+$ or $c(t) = x$, where $v_{\pm}$ are the only two elements of $\Sigma_x X$.
	It is easy to see that the sets $A_{\pm} = \lbrace t \in [0,d(x',gx')] \text{ s.t. } [c(t)]_x = v_{\pm}\rbrace$ are closed. If $c(t) \neq x$ for every $t\in [0,d(x',gx')]$ one concludes that $[0,d(x',gx')] = A_+$ or $[0,d(x',gx')] = A_-$. We claim that both these two cases are impossible since $[x']_x \neq [gx']_x$. Indeed we consider the convex map $d(x', \cdot)$ on the geodesic $\text{Min}(g)$ which has its minimum at $y$. This forces to have $[x']_x = [y]_x$. Indeed let $x_{\pm \varepsilon}$ be the points along Min$(g)$ at distance $\varepsilon$ from $x$ in direction $v_\pm$ and suppose $[y]_x = v_-$. Then $d(x_{-\varepsilon}, x') < d(x_{+\varepsilon}, x')$. The geodesic $[x',x]$ is either $[x',x_{-\varepsilon}] \cup [x_{-\varepsilon}, x]$ or $[x',x_{+\varepsilon}] \cup [x_{+\varepsilon}, x]$, so the inequality above says it is the first one. In other words $[x']_x = [y]_x$. In the same way $[gx']_x = [gy]_x$ and clearly $[y]_x \neq [gy]_x$. Therefore there must be some $t\in [0,d(x',gx')]$ such that $c(t)=x$.
	This forces the length of $c$ to be bigger than $2r_0$. Analogously we have that $d(g^nx', g^mx') > 2r_0$ for all distincts $n,m \in \mathbb{Z}$. Moreover $d(g^nx', y) \leq r_0 + \vert n \vert \ell(g) \leq r_0 + \vert n \vert \cdot \frac{4r_0}{P_0 + 1}$ for all $n\in \mathbb{Z}$. Therefore we can find $P_0 + 1$ distinct points of the form $g^n x'$ inside $\overline{B}(y,3r_0)$. They form a $2r_0$-separated subset of $\overline{B}(y,3r_0)$ and this is a contradiction to the packing assumption on $X$.
\end{proof}

Another application is a characterization of spaces  with small  diastole admitting a  strictly negatively curved open subset. It resembles \cite{CCR01}, Corollary 0.7.

\begin{cor}
	\label{corollary-CAT-epsilon}
	Let $X$ be a complete and geodesically complete  \textup{CAT}$(0)$-space which is $P_0$-packed at scale $r_0$, and suppose that  there exists some (arbitrarily small) open set $U \subset X$ which is \textup{CAT}$(-\varepsilon)$,  for some $\varepsilon > 0$. \linebreak  Let $\Gamma$ be a discrete, slim, almost-cocompact group of isometries of $X$:  if $\textup{dias}(\Gamma , X) < \eta_0$ then  $X$ is isometric to $\mathbb{R}$.
\end{cor}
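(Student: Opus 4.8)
The plan is to run the decomposition $X=\bigcup_{g\in\Lambda}\textup{Min}(g)$ provided by Theorem \ref{theo-alternative} together with Remark \ref{remark-union-interior} (applicable since $\textup{dias}(\Gamma,X)<\eta_0$ and $\Gamma$ is slim), where $\Lambda$ is the set of hyperbolic $g\in\Gamma$ with $\ell(g)\leq\lambda_0$ and $\textup{Min}(g)$ of non-empty interior, and then to confront the splitting $\textup{Min}(g)=D(g)\times\mathbb{R}$ with the negative curvature hypothesis. The first step is to single out one $g_0\in\Lambda$ whose minimal set meets $U$ essentially: since $\{\textup{Min}(g)\}_{g\in\Lambda}$ is a locally finite family of closed sets covering $X$ (Remark \ref{remark-union-interior}), Baire's theorem applied to the non-empty open set $U$ yields $g_0\in\Lambda$ with $\textup{int}(\textup{Min}(g_0))\cap U\neq\emptyset$.

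Next I would show that $D(g_0)$ reduces to a point. Choose $z=(w_0,t_0)\in\textup{int}(\textup{Min}(g_0))\cap U$ and $\rho>0$ small enough that $B(z,\rho)\subseteq\textup{int}(\textup{Min}(g_0))\cap U$; by convexity of $\textup{Min}(g_0)$ the ball $B(z,\rho)$ computed in $X$ agrees with the $\rho$-ball of $D(g_0)\times\mathbb{R}$ around $z$. If $D(g_0)$ were not a single point, then, being convex and hence geodesically connected, it would contain a non-degenerate geodesic segment $[w_0,w']$ issuing from $w_0$; for $a>0$ small (shrinking $[w_0,w']$ if needed) the subset $\{(w,t):w\in[w_0,w'],\ d(w_0,w)\leq a,\ |t-t_0|\leq a\}$ lies inside $B(z,\rho)\subseteq U$ and is isometric to a non-degenerate flat rectangle $[0,a]\times[-a,a]$, which is moreover convex in $X$. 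This is impossible inside a \textup{CAT}$(-\varepsilon)$ space: three of its vertices span a geodesic triangle of $U$ with angle sum $\pi$, violating the \textup{CAT}$(-\varepsilon)$ comparison inequality (cf.\ \cite{BH09}). Hence $D(g_0)$ is a point.

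Consequently $\textup{Min}(g_0)$ is isometric to $\mathbb{R}$, and since it has non-empty interior in $X$, every interior point of it has a small ball isometric to an interval; thus $X$ has a point of dimension $1$. The conclusion $X\cong\mathbb{R}$ now follows directly from Proposition \ref{prop-dimension-1}. The only delicate point I anticipate is the second step, namely ensuring that a non-degenerate $D(g_0)$ genuinely produces a two–dimensional flat region sitting inside the prescribed open set $U$ (and not merely inside $\textup{Min}(g_0)$), so that the curvature bound can be invoked; once that is secured the argument is immediate, and the rest is bookkeeping with the product structure of minimal sets.
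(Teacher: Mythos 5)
Your proposal is correct and follows essentially the same route as the paper: both extract from the decomposition $X=\bigcup_{g\in\Lambda}\textup{Min}(g)$ an isometry whose minimal set meets $U$ with $D(g)$ non-degenerate, produce a small convex flat rectangle inside $U$ from the product structure $D(g)\times\mathbb{R}$ to contradict the \textup{CAT}$(-\varepsilon)$ hypothesis, and reduce the remaining case to Proposition \ref{prop-dimension-1}. The only cosmetic differences are that you locate $g_0$ via Baire so that $\textup{int}(\textup{Min}(g_0))$ meets $U$ and read off flatness directly from the product metric, whereas the paper first assumes all points have dimension $\geq 2$ and invokes the flat quadrilateral theorem on the quadrangle $(y,0),g(y,0),g(y',0),(y',0)$.
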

\begin{proof}
	Again we know that $X=\bigcup_{g\in \Lambda}\text{Min}(g)$. If there exists a point of dimension $1$ then $X$ is isometric to $\mathbb{R}$ by Proposition \ref{prop-dimension-1}. Therefore we can suppose that every point of $X$ has dimension $\geq 2$. We then take a point $x\in U$ and an isometry $g\in \Lambda$, such that $x\in \text{Min}(g) = D(g)\times\mathbb{R}$. We identify $x$ with a point $(y,0)\in D(g)\times\mathbb{R}$. Since every point of $X$ has dimension $\geq 2$ then $D(g)$ is not a single point, i.e. there is $y\neq y' \in D(g)$. By the quadrangle flat theorem (\cite{BH09}, Theorem II.2.11) we have that the non-degenerate quadrangle $(y,0), g(y,0), g(y',0), (y',0)$ is isometric to a rectangle in $\mathbb{R}^2$. It is then possible to find inside $U$ a non-degenerate quadrangle isometric to a euclidean rectangle. This contradicts the assumption on $U$.
\end{proof}

	For the next applications we need the following splitting criterion.
	\begin{lemma}
		\label{lemma-splitting-cyclic}
		Let $X$ be a complete, geodesically complete, \textup{CAT}$(0)$-space which is $P_0$-packed at scale $r_0$. Let $\Gamma$ be a discrete, slim, almost-cocompact group of isometries of $X$. Suppose that $X=\bigcup_{g\in \Lambda} \textup{Min}(g)$ and that if $g,h \in \Lambda$ are such that $\textup{Min}(g) \cap \textup{Min}(h) \neq \emptyset$ then $\langle g,h \rangle$ is virtually cyclic.\\
		Then $X$ splits isometrically as $Y\times \mathbb{R}$. Moreover if $\Gamma$ is finitely generated then $\Gamma$ virtually splits as $\Gamma_Y \times \mathbb{Z}$, where $\Gamma_Y$ (resp. $\mathbb{Z}$) acts discretely on $Y$ (resp. $\mathbb{R}$).
	\end{lemma}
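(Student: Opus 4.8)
The plan is to exhibit a $\Gamma$--invariant family of pairwise parallel geodesic lines covering $X$ --- the leaves being the axes of the elements of $\Lambda$ --- and then apply the parallel--set splitting theorem (\cite{BH09}, II.2.13--14). For $g\in\Lambda$ write $\textup{Min}(g)=D(g)\times\mathbb{R}$. Through every point $p=(y,s)\in D(g)\times\mathbb{R}$ there is a \emph{unique} axis of $g$, namely the fibre $\{y\}\times\mathbb{R}$: any axis of $g$ is contained in $\textup{Min}(g)$ and projects to a periodic geodesic of $D(g)$, hence to a point. Since $X=\bigcup_{g\in\Lambda}\textup{Min}(g)$, for $x\in X$ I would set $\ell_x:=$ the axis of $g$ through $x$ for some $g\in\Lambda$ with $x\in\textup{Min}(g)$. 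This is well defined: if $x\in\textup{Min}(g)\cap\textup{Min}(h)$ with $g,h\in\Lambda$ then $\langle g,h\rangle$ is virtually cyclic by hypothesis, so the infinite cyclic subgroups $\langle g\rangle,\langle h\rangle$ have finite index in it; hence $\langle g\rangle\cap\langle h\rangle$ is a nontrivial common finite index subgroup, whence $g^{a}=h^{\pm b}$ for some $a,b\ge1$. As $x\in\textup{Min}(g)\subseteq\textup{Min}(g^{a})=\textup{Min}(h^{\pm b})$ and minimal sets and axes are unchanged when passing to a power or an inverse, the axis of $g$ through $x$ is also the unique axis of $g^{a}=h^{\pm b}$ through $x$, i.e.\ the axis of $h$ through $x$; so $\ell_x$ depends only on $x$.

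Next I would globalise parallelism. Within one $\textup{Min}(g)=D(g)\times\mathbb{R}$ all the lines $\ell_y$ are fibres, hence pairwise parallel, and by the previous step they coincide on overlaps. Using that $\{\textup{Min}(g):g\in\Lambda\}$ is locally finite (Remark \ref{remark-union-interior}) --- so a small ball around any point is covered by finitely many $\textup{Min}(g)$ all containing that point --- together with transitivity of parallelism of lines in a \textup{CAT}$(0)$--space (flat strip theorem, \cite{BH09}, II.2.13), a routine open--and--closed argument on the connected space $X$ shows that all the lines $\ell_x$, $x\in X$, are pairwise parallel. Hence $X=\bigcup_{x}\ell_x$ lies in the parallel set of one fixed leaf $\ell_{x_0}$, which by \cite{BH09}, II.2.14 is a closed convex subset isometric to $Y\times\mathbb{R}$; therefore $X=Y\times\mathbb{R}$ (note $\Lambda\neq\emptyset$, otherwise $X=\emptyset$).

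For the ``moreover'' assume $\Gamma$ finitely generated. The leaf set is $\Gamma$--invariant, since $\phi\,\textup{Min}(g)=\textup{Min}(\phi g\phi^{-1})$ with $\phi g\phi^{-1}\in\Lambda$, so $\phi\ell_x=\ell_{\phi x}$. In $Y\times\mathbb{R}$ the lines parallel to the $\mathbb{R}$--factor are exactly the fibres $\{y\}\times\mathbb{R}$, so every $\phi\in\Gamma$ permutes these fibres; a direct computation then forces $\phi$ to respect the product, $\phi=(\bar\phi,\tau_\phi)$ with $\bar\phi\in\textup{Isom}(Y)$, $\tau_\phi\in\textup{Isom}(\mathbb{R})$. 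Let $\Gamma^{+}\le\Gamma$ be the index $\le2$ subgroup on which $\tau_\phi$ is a translation, $\rho\colon\Gamma^{+}\to\mathbb{R}$ the translation homomorphism, $\Gamma_Y:=\ker\rho$. Then $\Gamma_Y$ is discrete on $Y$ (as $d_X((y,0),\phi(y,0))=d_Y(y,\bar\phi y)$ for $\phi\in\Gamma_Y$) and $\rho(\Gamma^{+})$ is a finitely generated subgroup of $\mathbb{R}$, nontrivial since each $g\in\Lambda$ moves the $\mathbb{R}$--factor by $\pm\ell(g)\neq0$. Moreover each $g\in\Lambda$ splits as $g=(\bar g,\tau_{\pm\ell(g)})$ with $\bar g$ \emph{elliptic} on $Y$ (its axis is a fibre, preserved by $g$), so $\textup{Min}(g)=\textup{Fix}_Y(\bar g)\times\mathbb{R}$; and if $\textup{Min}(g)\cap\textup{Min}(h)\neq\emptyset$ the relation $g^{a}=h^{\pm b}$, read on the $\mathbb{R}$--components, gives $a\,\ell(g)=b\,\ell(h)$, i.e.\ $\ell(g)/\ell(h)\in\mathbb{Q}$. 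Since $Y=\bigcup_{g\in\Lambda}\textup{Fix}_Y(\bar g)$ is connected and covered in a locally finite way by these closed sets (so their intersection pattern is connected), all ratios $\ell(g)/\ell(h)$ are rational, hence the finitely generated subgroup of $\mathbb{R}$ they generate is infinite cyclic. Feeding in almost--cocompactness --- which is what forces $\Gamma_Y$ to act cocompactly on $Y$ and rules out $\rho(\Gamma^{+})$ being larger and dense --- one gets $\rho(\Gamma^{+})\cong\mathbb{Z}$, so $1\to\Gamma_Y\to\Gamma^{+}\to\mathbb{Z}\to1$ splits; finally, replacing a generator $\gamma_0$ of the $\mathbb{Z}$--quotient by a suitable power times an element of $\Gamma_Y$ so that it centralises a finite index subgroup of $\Gamma_Y$ (using that $\gamma_0$ normalises the cocompact lattice $\bar\Gamma_Y\le\textup{Isom}(Y)$), one extracts a finite index subgroup of $\Gamma^{+}$ isomorphic to $\Gamma_Y\times\mathbb{Z}$, with $\mathbb{Z}$ acting on $\mathbb{R}$ by a nonzero translation.

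The delicate point is precisely this last step: showing that $\rho(\Gamma^{+})$ is genuinely \emph{discrete} (not merely that the contributions of $\Lambda$ are commensurable) and that the resulting split extension is, up to finite index, a \emph{direct} product, i.e.\ controlling the monodromy of $\gamma_0$ on $\Gamma_Y$. This is where almost--cocompactness is essential --- with mere discreteness the $\mathbb{R}$--projection of $\Gamma$ need not be discrete and the conclusion fails --- whereas the geometric splitting $X=Y\times\mathbb{R}$ itself is soft once uniqueness of axes and transitivity of parallelism are in place.
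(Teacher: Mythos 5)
Your proof of the geometric splitting $X=Y\times\mathbb{R}$ is correct and essentially the paper's argument: uniqueness of the axis of $g$ through a point of $\textup{Min}(g)$, the commensuration $g^{a}=h^{\pm b}$ on overlaps, propagation of parallelism (the paper runs a chain argument using finiteness of $\Lambda_K$ for $K=[x,y]$ where you use local finiteness plus an open--closed argument; these are the same idea), and the parallel-set decomposition of \cite{BH09}. Your variant is in one respect slightly cleaner: since each $\textup{Min}(g)$ is a union of fibres parallel to the chosen axis $c$, the hypothesis $X=\bigcup_{g\in\Lambda}\textup{Min}(g)$ gives $X\subseteq P_c$ directly, so you do not need minimality of the action (Proposition \ref{prop-minimal}), which the paper invokes to conclude $X=P_c$.

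The ``moreover'' part, however, has two genuine gaps, both of which you half-acknowledge without closing. First, you need $\rho(\Gamma^{+})\cong\mathbb{Z}$, i.e.\ discreteness of the $\mathbb{R}$-projection of $\Gamma$. Your commensurability computation only controls the translation lengths of elements of $\Lambda$, not the $\mathbb{R}$-components of arbitrary generators of $\Gamma$, and the appeal to almost-cocompactness (``rules out $\rho(\Gamma^{+})$ being larger and dense'') is an assertion rather than an argument: a discrete group acting on a product can have dense projection to a factor (irrational lattices in $\mathbb{R}^{2}$), so some hypothesis-specific mechanism must be exhibited. Second, even granting a split exact sequence $1\to\Gamma_Y\to\Gamma^{+}\to\mathbb{Z}\to 1$, promoting the semidirect product to a virtually direct one requires killing the monodromy of $\gamma_0$ on a finite-index subgroup of $\Gamma_Y$; ``replacing $\gamma_0$ by a suitable power times an element of $\Gamma_Y$'' is precisely the statement to be proved, and the fact that $\gamma_0$ normalises a lattice in $\textup{Isom}(Y)$ does not bound the order of the induced outer automorphism. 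The paper resolves both difficulties at once by a different device: using finiteness of $\Lambda_K$ and the chain argument it produces a single hyperbolic $\gamma$ (a common power of the finitely many elements of $\Lambda$ whose minimal sets meet $\overline{B}(x_0,2R)$) with $\overline{B}(x_0,2R)\subseteq\textup{Min}(\gamma)$; then $\gamma$ and $h_i\gamma h_i^{-1}$ have the same axes and translation length on the open set $B(h_ix_0,R)$, hence coincide there, and \emph{slimness} forces $h_i\gamma h_i^{-1}=\gamma$. Thus $\langle\gamma\rangle$ is central in $\Gamma$ and Theorem II.7.1 of \cite{BH09} gives the finite-index direct factor. Note that this is exactly where the slim hypothesis enters the lemma; your sketch never uses it, which is a symptom of the group-theoretic half being incomplete.
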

	The first part of the proof should be compared to \cite{BGS13}, Appendix 2, Lemma 1 and Lemma 2. 
	\begin{proof}
		A first observation is that if $K$ is a compact subset of $X$ then the set $\Lambda_K := \lbrace g\in \Lambda \text{ s.t. } \text{Min}(g)\cap K \neq \emptyset\rbrace$ is finite. Indeed if $x$ is a point of $K$ then $d(x,gx) \leq 2\text{Diam}(K) + \lambda_0$ for all $g \in \Lambda_K$ and the claim follows by discreteness of $\Gamma$.\\
		The second observation is that the isometries of $\Lambda$ have all parallel axes. Let $g,h \in \Lambda$ and take $x\in \text{Min}(g), y \in \text{Min}(h)$. Applying the first claim to $K=[x,y]$ and using the connectedness of $[x,y]$ we can find a finite sequence $g = f_0, f_1,\ldots, f_n = h$ of elements of $\Lambda$ such that $\text{Min}(f_i) \cap \text{Min}(f_{i+1}) \neq \emptyset$ for all $i = 0,\ldots, n-1$. We prove that the axes of $g$ and $h$ are parallel by induction on $n$. If $n=0$ it is trivial. If $n=1$, since $\langle g,h \rangle$ is virtually cyclic, there exist $p,q\in\mathbb{Z}^*$ such that $g^p = h^q$ and the claim follows. The induction step is similar: the axes of $g$ are parallel to the axes of $f_{n-1}$ and there exist $p,q\in\mathbb{Z}^*$ such that $f_{n-1}^p=h^q$.\\
		Let now $c$ be one axis of some $g\in \Lambda$ and consider the subset $P_c$ of $X$ of all geodesic lines parallel to $c$. It is closed, convex and $\Gamma$-invariant. Indeed if $\gamma \in \Gamma$ then $\gamma c$ is an axis of $\gamma g \gamma^{-1} \in \Lambda$, so $\gamma c$ is parallel to $c$. By minimality of the action, Proposition \ref{prop-minimal}, we conclude that $X=P_c$ and $P_c$ canonically splits isometrically as $Y \times \mathbb{R}$, cp. Theorem II.2.14 of \cite{BH09}.
		
		\noindent For the finitely generated case we need to show another claim: for every compact, convex $K \subseteq X$ there exists a hyperbolic isometry $\gamma \in \Gamma^*$ such that $K\subseteq \text{Min}(\gamma)$ and the axes of $\gamma$ are parallel to $c$. We know $\Lambda_K$ is finite, say $\Lambda_K = \lbrace g_1,\ldots,g_n\rbrace$, so $K\subseteq\bigcup_{i=1}^n\text{Min}(g_i)$. By connectedness of $K$ we know that for each $i$ there exists $j$ such that $\text{Min}(g_i) \cap \text{Min}(g_j) \neq \emptyset$. An easy computation shows that there are $p_i \in \mathbb{Z}^*$ such that $g_i^{p_i} = g_j^{p_j}$ for all $i,j \in \lbrace1,\ldots,n\rbrace$. If we call $\gamma$ this common power we have 
		$$K\subseteq\bigcup_{i=1}^n\text{Min}(g_i) \subseteq \bigcup_{i=1}^n\text{Min}(g_i^{p_i}) = \text{Min}(\gamma),$$
		and clearly the axes of $\gamma$ are parallel to $c$.\\
		Suppose $\Gamma$ is generated by $\lbrace h_1,\ldots,h_n\rbrace$ and set $R:= \max_{i=1,\ldots,n}d(x_0,h_ix_0)$ for some point $x_0 \in X$. We apply the claim above to $K = \overline{B}(x_0,2R)$, finding a hyperbolic isometry $\gamma \in \Gamma^*$ with axes parallel to $c$ such that $\overline{B}(x_0,2R) \subseteq \text{Min}(\gamma)$. 
		We recall that $\Gamma$ sends geodesic lines parallel to $c$ to geodesic lines parallel to $c$. In particular the axes of $h_i\gamma h_i^{-1}$ are parallel to the axes of $\gamma$. By definition of $R$ and $\gamma$ we observe that $B(h_ix_0,R) \subseteq \text{Min}(\gamma) \cap \text{Min}(h_i\gamma h_i^{-1})$. Therefore these two isometries have the same axis passing through every point of $B(h_ix_0,R)$ and they have the same translation length. This implies that $h_i\gamma h_i^{-1} = \gamma$ on $B(h_ix_0,R)$. Hence $h_i\gamma h_i^{-1} = \gamma$ because they coincide on an open subset. This shows that $\langle \gamma \rangle$ is normal, and even central, in $\Gamma$. So $\Gamma$ preserves $\text{Min}(\gamma) = X = Y \times \mathbb{R}$ because $\Gamma$ is minimal by Proposition \ref{prop-minimal} and $\Gamma$ virtually splits as $\Gamma_Y \times \mathbb{Z}$ by Theorem II.7.1 of \cite{BH09}, with each factor acting discretely on the corresponding space.
	\end{proof}

	\begin{proof}[Proof of Corollary \ref{cor-ZxZ}]  
		As usual we know that $X= \bigcup_{g \in \Lambda} \text{Min}(g)$. For every $x\in X$ let $\overline{\Gamma}_{\lambda_0}(x)$ be the non-trivial group generated by $\overline{\Sigma}_{\lambda_0}(x)$. By Proposition \ref{prop-structure-virt-nilpotent} we can find a convex subset $W_x \times \mathbb{R}^{k_x}$ and a finite index subgroup $\Gamma_{W_x} \times \mathbb{Z}^{k_x}$ of $\overline{\Gamma}_{\lambda_0}(x)$, where $\Gamma_{W_x}$ acts on $W_x$ without hyperbolic elements and $\mathbb{Z}^{k_x}$ acts on $\mathbb{R}^{k_x}$ as a lattice. The existence of a hyperbolic isometry displacing $x$ by at most $\lambda_0$ forces to have $k_x\geq 1$.
		If $k_x\geq 2$ we have the desired $\mathbb{Z}\times \mathbb{Z}$, so we can suppose $k_x=1$. If $\Gamma_{W_x}$ contains some parabolic element we have again $\mathbb{Z} \times \mathbb{Z}$ inside $\Gamma$. In the remaining case we have $k_x=1$ and $\Gamma_{W_x}$ finite, so $\overline{\Gamma}_{\lambda_0}(x)$ is virtually cyclic for every $x\in X$.
		The assumptions of Lemma \ref{lemma-splitting-cyclic} are clearly satisfied, so $X$ splits as $Y\times \mathbb{R}$. If moreover $\Gamma$ is cocompact (hence finitely generated) then it has a finite index subgroup which splits as $\Gamma_Y \times \mathbb{Z}$, and clearly $\Gamma_Y$ acts cocompactly on $Y$. By \cite{Swe99}, Theorem 11 either $Y$ is a point or $\Gamma_Y$ contains an infinite order element, because $Y$ is also geodesically complete. In the second case we have again $\mathbb{Z} \times \mathbb{Z}$ inside $\Gamma$, in the first one $X=\mathbb{R}$.

	\end{proof}
	
	\begin{obs}
		\em{If in Corollary \ref{cor-ZxZ} we consider finitely generated almost-cocompact groups in place of cocompact ones we can still apply Lemma \ref{lemma-splitting-cyclic} to get a finite index subgroup of the form $\Gamma_Y \times \mathbb{Z}$. Moreover we could get the same conclusion of Corollary \ref{cor-ZxZ} under these weaker assumptions if we knew that $\Gamma_Y$ contains an isometry of infinite order. This last statement is the content of the following conjecture.}
	\end{obs}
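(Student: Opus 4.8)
précédentedaddy.

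The plan is to verify the two assertions of the remark by retracing the proof of Corollary \ref{cor-ZxZ} and isolating the unique place where cocompactness, as opposed to mere finite generation together with almost-cocompactness, was used. Every ingredient of that proof up to its final sentence survives in the almost-cocompact setting. Indeed, by Theorem \ref{theo-alternative}, whose argument is valid for almost-cocompact actions (see the remark following its proof), together with Remark \ref{remark-union-interior} and slimness, one still has $X=\bigcup_{g\in\Lambda}\textup{Min}(g)$. The ensuing dichotomy is purely local and group-theoretic: for each $x$ one applies Proposition \ref{prop-structure-virt-nilpotent} to the virtually nilpotent group $\overline{\Gamma}_{\lambda_0}(x)$, obtaining a convex factor $W_x\times\mathbb{R}^{k_x}$ with $k_x\geq 1$, and concludes that either some $\overline{\Gamma}_{\lambda_0}(x)$ already contains a $\mathbb{Z}\times\mathbb{Z}$ (when $k_x\geq 2$, or when $k_x=1$ and $\Gamma_{W_x}$ has a parabolic), or else $\overline{\Gamma}_{\lambda_0}(x)$ is virtually cyclic for every $x$. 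None of this uses cocompactness.

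For the first assertion I would simply note that in the remaining case the hypotheses of Lemma \ref{lemma-splitting-cyclic} hold: if $g,h\in\Lambda$ share a point $z$ of their minimal sets then $g,h\in\overline{\Sigma}_{\lambda_0}(z)$, so $\langle g,h\rangle\leq\overline{\Gamma}_{\lambda_0}(z)$ is virtually cyclic. Since Lemma \ref{lemma-splitting-cyclic} is already stated for almost-cocompact $\Gamma$, it yields $X=Y\times\mathbb{R}$ and, when $\Gamma$ is finitely generated, the finite-index splitting $\Gamma_Y\times\mathbb{Z}$ with $\Gamma_Y$ acting discretely on $Y$ and $\mathbb{Z}$ as a lattice on $\mathbb{R}$. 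This is exactly the claimed decomposition, and the only hypothesis beyond almost-cocompactness that the argument consumes is finite generation; this justifies the first sentence of the remark.

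The second assertion concerns the final step, where cocompactness entered solely through \cite{Swe99}, Theorem 11: applied to the cocompact action of $\Gamma_Y$ on the geodesically complete space $Y$, it gives that either $Y$ is a point or $\Gamma_Y$ contains an infinite-order isometry. That theorem requires cocompactness and is therefore unavailable once $\Gamma_Y$ acts only almost-cocompactly on $Y$ (one checks that $\Gamma_Y$ does inherit an almost-cocompact action on $Y$, since $\Gamma_Y\times\mathbb{Z}$ has finite index in $\Gamma$ and $\mathbb{Z}$ is cocompact on $\mathbb{R}$). This is, however, the sole obstruction. Suppose one knew that $\Gamma_Y$ contains an infinite-order isometry whenever $Y$ is not a point. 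If $Y$ is a point, then $X=\mathbb{R}$, which is conclusion (ii'). If $Y$ is not a point, choose such an infinite-order $\gamma_Y\in\Gamma_Y$ and a generator $\tau$ of the $\mathbb{Z}$-factor: inside the direct product $\Gamma_Y\times\mathbb{Z}$ these commute and lie in distinct direct factors, so $\langle\gamma_Y,\tau\rangle\cong\mathbb{Z}\times\mathbb{Z}$, which embeds in $\Gamma$ and gives conclusion (i). Thus, conditionally on this infinite-order statement, the full conclusion of Corollary \ref{cor-ZxZ}, namely (i) or (ii'), is recovered under the weaker hypotheses.

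The main obstacle is exactly the unavailability of \cite{Swe99}, Theorem 11 for almost-cocompact actions. Filling it amounts to showing that a finitely generated, discrete group acting almost-cocompactly on a non-trivial, geodesically complete, packed \textup{CAT}$(0)$-space must contain an infinite-order isometry. As the remark signals, this is the one ingredient that cannot be supplied by the present methods; I would record it as the conjecture stated next and would not attempt to prove it here, since it is precisely the conjectural input the remark isolates.
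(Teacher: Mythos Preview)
Your proposal is correct and follows the same line the paper intends: the remark in the paper has no separate proof, and your elaboration faithfully unpacks it by retracing the proof of Corollary \ref{cor-ZxZ}, noting that every step up to the invocation of \cite{Swe99} works for slim, almost-cocompact, finitely generated groups, and that Lemma \ref{lemma-splitting-cyclic} already delivers the virtual splitting $\Gamma_Y\times\mathbb{Z}$ under finite generation alone.

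Two minor comments. First, delete the stray word at the very start of your proposal. Second, your parenthetical that ``$\Gamma_Y$ does inherit an almost-cocompact action on $Y$'' is an extra claim the paper does not make: the remark only says the conclusion would follow \emph{if} $\Gamma_Y$ contained an infinite-order element, and the conjecture is then stated in a form that does not presuppose almost-cocompactness of $\Gamma_Y$ (the ``easier version'' is phrased as a separate hypothesis, not as something deduced). Your heuristic for why $\Gamma_Y$ should be almost-cocompact is plausible but would need a short argument relating $\textup{Thick}_\delta(\Gamma_Y,Y)$ to $\textup{Thick}_{\delta'}(\Gamma,X)$; since this is not needed for the remark as stated, you may simply drop that parenthetical.
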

	\begin{conj}
		Let $X$ be a proper, \textup{CAT}$(0)$-space and let $\Gamma$ be a discrete group of isometries of $X$, not necessarely cocompact. If $\Gamma$ is torsion, i.e. it contains only finite order elements, then $\Gamma$ is finite.\\
		An easier version for us would be: if $\Gamma$ is almost-cocompact then it contains an infinite order element.
	\end{conj}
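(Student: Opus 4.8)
The plan is to prove the contrapositive in the form most useful for the applications: assuming every element of $\Gamma$ is elliptic, I will try to force either a global fixed point or a fixed point at infinity. The first reduction is immediate. Since $\Gamma$ is discrete every element has finite order, and a finite order isometry of a complete CAT$(0)$-space is elliptic: applying Proposition \ref{prop-fix-points-finite} to the finite cyclic group it generates, each $g\in\Gamma$ fixes a point, so $\ell(g)=0$ and $\Gamma$ contains no hyperbolic and no parabolic isometries. If $\Gamma$ fixes a point $x\in X$, then $d(x,\gamma y)=d(x,y)$ for all $\gamma$, every orbit is bounded, and by discreteness $\Gamma=\mathrm{Stab}_\Gamma(x)=\overline{\Sigma}_0(x)$ is finite, which is the desired conclusion. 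So the whole difficulty is concentrated in the case where $\Gamma$ has no fixed point in $X$ but all its elements are elliptic.

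First I would settle the finite-dimensional case, which is the one relevant to the paper: under the packing hypothesis the dimension of $X$ is at most $P_0/2$ (Section \ref{subsec-packed,margulis}), so $X$ has finite geometric dimension and finite-dimensional boundary. In this setting I would invoke the structure theory for groups without axial isometries (the Adams--Ballmann and Caprace--Lytchak circle of results) to obtain the dichotomy \emph{elliptic group $\Rightarrow$ global fixed point or fixed point in $\partial X$}; the delicate point here is checking that this applies to a torsion group of elliptic isometries, which need not be amenable. Granting it, suppose $\Gamma$ fixes $\xi\in\partial X$. The associated Busemann cocycle is a homomorphism $\beta_\xi\colon\Gamma\to\mathbb{R}$, and since $\Gamma$ is torsion its image is trivial; hence the Busemann function at $\xi$ is $\Gamma$-invariant and every horoball centred at $\xi$ is a non-empty, closed, convex, $\Gamma$-invariant, \emph{proper} subset of $X$.

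This already yields the easier version. If $\Gamma$ is moreover slim and almost-cocompact then it is minimal by Proposition \ref{prop-minimal}, so it has no proper closed convex invariant subset: the invariant horoball above is excluded, while a global fixed point would force $X$ to be a single point and hence $\Gamma$ finite. Thus a slim, almost-cocompact group of elliptic isometries must be finite, equivalently an infinite one contains an isometry of infinite order, which is exactly what is needed for $\Gamma_Y$ in Corollary \ref{cor-ZxZ}. The \textbf{main obstacle} is the general, non-cocompact statement, where minimality is unavailable and the invariant horoball at $\xi$ produces no contradiction. The natural route is induction on dimension: pass from $X$ to the lower-dimensional CAT$(0)$ transverse space attached to the parabolic fixed point $\xi$ (the space of horospheres, or the cross-section of the parallel set), on which $\Gamma$ still acts as a torsion group, and hope to keep discreteness under this descent. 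Controlling discreteness and properness through the parabolic quotient, and simultaneously ruling out an infinite \emph{locally finite} $\Gamma=\bigcup_n F_n$ whose fixed-point sets escape to infinity along a nested chain $\mathrm{Fix}(F_1)\supsetneq\mathrm{Fix}(F_2)\supsetneq\cdots$ with empty intersection, is precisely the step I do not see how to complete --- which is why the statement is posed as a conjecture rather than proved.
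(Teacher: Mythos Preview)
The statement you are addressing is not proved in the paper: it is explicitly labelled a \emph{Conjecture} and left open, the preceding remark only explaining why its truth would be useful for extending Corollary~\ref{cor-ZxZ}. There is therefore no argument in the paper to compare your attempt against, and your final sentence shows you already recognise this.

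On the substance of your sketch, note that the ``easier version'' is not actually settled by what you wrote. The dichotomy \emph{global fixed point in $X$ or fixed point in $\partial X$} that you invoke from the Adams--Ballmann/Caprace--Lytchak circle requires amenability or a comparable hypothesis, and you correctly flag that a torsion group need not be amenable; yet you then write ``Granting it'' and proceed. That unproved step is used in \emph{both} branches of your discussion, so the almost-cocompact case inherits the same gap as the general one: the minimality from Proposition~\ref{prop-minimal} only excludes the invariant-horoball alternative of a dichotomy you have not established. You also add a slimness hypothesis that is absent from the conjecture as stated, so even modulo the gap you would be proving a strictly weaker statement. Your outline does isolate the genuine obstacles (no amenability available; possible locally finite towers $F_1\subset F_2\subset\cdots$ whose fixed-point sets escape to infinity), but it does not remove them, which is exactly why the authors recorded the statement as a conjecture rather than a result.
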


By the flat torus theorem  (\cite{BH09}, Theorem I.7.1), if $\Gamma$ contains $\mathbb{Z}\times\mathbb{Z}$ then $X$ contains an isometrically embedded flat $\mathbb{R}^2$. The same holds if $X$ splits as $Y\times \mathbb{R}$, with $Y$ different from a point. This is impossible for instance if $X$ is a visibility space (see Definition I.9.28 of \cite{BH09}), therefore we obtain:

\begin{cor}
	\label{cor-visibility}
	Let $X$ be a complete, geodesically complete, \textup{CAT}$(0)$-space which is $P_0$-packed at scale $r_0$, and assume that  $X$ is a visibility space. Let $\Gamma$ be a discrete, slim, almost-cocompact group of isometries of $X$:
	if $\textup{dias}(\Gamma , X) <\eta_0$ then $X$ is isometric to $ \mathbb{R}$.
\end{cor}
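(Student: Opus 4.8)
The plan is to deduce Corollary~\ref{cor-visibility} directly from the structural results already established, so that the proof is short and mostly a matter of invoking the right facts in the right order. First I would apply Theorem~\ref{theo-alternative} together with Remark~\ref{remark-union-interior}: since $\Gamma$ is discrete, slim and almost-cocompact and $\mathrm{dias}(\Gamma,X)<\eta_0$, we get $X=\bigcup_{g\in\Lambda}\mathrm{Min}(g)$, where $\Lambda$ is the set of hyperbolic isometries $g$ with $\ell(g)\le\lambda_0$ whose minimal set has nonempty interior. In particular $\Lambda\neq\emptyset$, so $X$ contains the minimal set $\mathrm{Min}(g)=D(g)\times\mathbb{R}$ of a hyperbolic isometry, and thus $X$ contains at least an isometrically embedded geodesic line.

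Next I would split into the same dichotomy used in the proof of Corollary~\ref{corollary-CAT-epsilon}. If $X$ has a point of dimension $1$, then Proposition~\ref{prop-dimension-1} immediately gives $X\cong\mathbb{R}$, and we are done (noting that $\mathbb{R}$ is trivially a visibility space, so there is no inconsistency). So assume every point of $X$ has dimension $\ge 2$. Then for the chosen $g\in\Lambda$ the convex set $D(g)$ cannot be a single point — otherwise $\mathrm{Min}(g)\cong\mathbb{R}$ would have interior points of dimension $1$ — so there exist $y\neq y'$ in $D(g)$. By the flat quadrilateral theorem (\cite{BH09}, Theorem~II.2.11), the quadrilateral with vertices $(y,0)$, $g(y,0)$, $g(y',0)$, $(y',0)$ in $D(g)\times\mathbb{R}$ bounds a flat rectangle, hence $X$ contains an isometrically embedded Euclidean plane $\mathbb{R}^2$.

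Finally I would invoke the characterization of visibility spaces: a \textup{CAT}$(0)$ visibility space contains no isometrically embedded flat plane (this follows from Definition~I.9.28 and the discussion in \cite{BH09}, since two distinct points of the boundary of a flat $\mathbb{R}^2$ are not joined by a geodesic line in $\mathbb{R}^2$, contradicting the visibility axiom). This contradicts the conclusion of the previous paragraph. Hence the case ``every point has dimension $\ge 2$'' cannot occur, and we are left with the first alternative, i.e. $X\cong\mathbb{R}$.

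I do not expect a genuine obstacle here: the whole content has been front-loaded into Theorem~\ref{theo-alternative}, Proposition~\ref{prop-dimension-1} and the flat quadrilateral theorem, and this corollary is essentially a corollary of Corollary~\ref{corollary-CAT-epsilon} with ``contains a flat rectangle'' replacing ``is locally $\textup{CAT}(-\varepsilon)$''. The only mild subtlety is making sure the flat rectangle produced really is two-dimensional and nondegenerate — this is guaranteed precisely because we reduced to the case where all points have dimension $\ge 2$, forcing $D(g)$ to be nondegenerate — and recalling the (standard) fact that visibility excludes embedded flats, for which one can cite \cite{BH09} directly rather than reprove it.
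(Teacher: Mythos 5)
Your reduction to the case where every point of $X$ has dimension $\geq 2$, and the production of a non-degenerate flat rectangle inside $\textup{Min}(g)=D(g)\times\mathbb{R}$, are both fine; the problem is the final step, which is a non sequitur. From a flat rectangle (or even from the flat strip $[y,y']\times\mathbb{R}$ that $\textup{Min}(g)$ actually contains) you cannot conclude that $X$ contains an isometrically embedded flat \emph{plane}. The visibility axiom is an asymptotic condition: it excludes isometrically embedded copies of $\mathbb{R}^2$, but it is perfectly compatible with compact flat pieces and even with flat strips of finite width. For instance, the universal cover of a closed surface carrying a nonpositively curved metric that is flat on a small disc and negatively curved elsewhere is Gromov-hyperbolic, hence a visibility space, yet it contains non-degenerate flat rectangles; gluing two hyperbolic half-planes along the two boundary lines of a Euclidean strip gives a visibility space containing a flat strip. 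This is precisely why the present corollary is \emph{not} ``Corollary \ref{corollary-CAT-epsilon} with flat rectangles in place of CAT$(-\varepsilon)$'': there the rectangle lies inside an open CAT$(-\varepsilon)$ set and the contradiction is local, whereas here you need a complete flat plane to contradict visibility.

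The paper closes this gap by routing the argument through Corollary \ref{cor-ZxZ} (hence through Proposition \ref{prop-structure-virt-nilpotent} and Lemma \ref{lemma-splitting-cyclic}): either $\Gamma$ contains $\mathbb{Z}\times\mathbb{Z}$, in which case the flat torus theorem produces a genuine flat $\mathbb{R}^2\subseteq X$, or $X$ splits as $Y\times\mathbb{R}$; in the latter case $Y$ is geodesically complete, so if it is not a point it contains a full geodesic line and again $X\supseteq\mathbb{R}\times\mathbb{R}$. Both alternatives contradict visibility unless $Y$ is a point, i.e. $X=\mathbb{R}$. To repair your proof you would have to promote your flat rectangle to a flat plane, which is exactly the content of that splitting machinery; the dimension-$1$ branch of your argument (via Proposition \ref{prop-dimension-1}) is correct but is not where the difficulty lies.
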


The last example of characterization  of spaces  with small  diastole   is the following, holding for spaces of dimension at most $2$.
\begin{prop}
	\label{prop-dimension-2}
	Let $X$ be a complete, geodesically complete, \textup{CAT}$(0)$-space which is $P_0$-packed at scale $r_0$, and assume that the dimension of $X$ is at most  $2$. Let $\Gamma$ be a discrete, slim, almost-cocompact group of isometries of $X$: if  $\textup{dias}(\Gamma , X) < \eta_0$ then $X$ is isometric to a product $T \times \mathbb{R}$, where $T$ is a geodesically complete, simplicial tree (possibily reduced to a point). Moreover if $\Gamma$ is cocompact then it has a finite index subgroup which splits as $\Gamma_T \times \mathbb{Z}$, where $\Gamma_T$ (resp. $\mathbb{Z}$) acts discretely and cocompactly on $T$ (resp. $\mathbb{R}$).
 \end{prop}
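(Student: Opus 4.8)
The plan is to exploit the decomposition $X=\bigcup_{g\in\Lambda}\textup{Min}(g)$ coming from Theorem~\ref{theo-alternative} and Remark~\ref{remark-union-interior} (the cover being locally finite, Remark~\ref{remark-union-interior} lets us keep only the $\textup{Min}(g)$ with non-empty interior; since $\Gamma$ is slim those are all hyperbolic, so $\Lambda\neq\emptyset$ because $X\neq\emptyset$), and to deduce that \emph{all the axes of the elements of $\Lambda$ are parallel}. Once this is known, the parallel set $P_c$ of a single such axis $c$ is closed, convex, and $\Gamma$-invariant (because $\gamma c$ is an axis of $\gamma g\gamma^{-1}\in\Lambda$ for every $\gamma\in\Gamma$), so $P_c=X$ by minimality (Proposition~\ref{prop-minimal}) and $X=Y\times\mathbb{R}$ canonically (Theorem~II.2.14 of \cite{BH09}). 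If $\dim X\leq 1$ the conclusion is already Proposition~\ref{prop-dimension-1} ($X\cong\mathbb{R}$, $T$ a point), so I would assume $\dim X=2$; then for $g\in\Lambda$ the set $\textup{Min}(g)=D(g)\times\mathbb{R}$ has non-empty interior, hence is $2$-dimensional, so $D(g)$ is a $1$-dimensional, complete, geodesically complete, $P_0$-packed at scale $r_0$ convex subset, i.e.\ an $\mathbb{R}$-tree; the factor $Y$ above is then such a space too, and I would invoke the (known) structure of $1$-dimensional packed geodesically complete CAT$(0)$-spaces to identify $Y=T$ with a simplicial tree.

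The heart of the matter is the claim: \textbf{if $g,h\in\Lambda$ and $\textup{Min}(g)\cap\textup{Min}(h)\neq\emptyset$, then the axes of $g$ and of $h$ are parallel.} Granting it, the standard connectedness/chaining argument along a geodesic $[x,y]$ with $x\in\textup{Min}(g)$, $y\in\textup{Min}(h)$ (as in the proof of Lemma~\ref{lemma-splitting-cyclic}, using local finiteness of $\{\textup{Min}(f)\}_{f\in\Lambda}$) propagates parallelism to every pair in $\Lambda$. To prove the claim, pick $x\in\textup{Min}(g)\cap\textup{Min}(h)$; since $\ell(g),\ell(h)\leq\lambda_0\leq\varepsilon_0$ we have $g,h\in\overline{\Sigma}_{\lambda_0}(x)$, so $\Gamma':=\langle g,h\rangle\leq\overline{\Gamma}_{\lambda_0}(x)$ is a finitely generated, discrete, virtually nilpotent group by Theorem~\ref{theorem-Margulis-nilpotent}. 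Applying Proposition~\ref{prop-structure-virt-nilpotent} to $\Gamma'$ yields a $\Gamma'$-invariant convex $W\times\mathbb{R}^k\subseteq X$ on which every element acts diagonally with not-hyperbolic first component; since $g,h$ are hyperbolic and the product of a parabolic isometry with a translation is parabolic, their first components $g',h'$ are \emph{elliptic} on $W$, while $k\geq 1$ and the $\mathbb{R}^k$-components of $g$ and $h$ are non-trivial translations. As $W\times\mathbb{R}^k$ is convex in $X$ we have $\dim W+k\leq 2$, so $k\in\{1,2\}$. If $k=1$, the axes of $g$ lie in $\textup{Fix}(g')\times\mathbb{R}$ and those of $h$ in $\textup{Fix}(h')\times\mathbb{R}$, both parallel to the $\mathbb{R}$-factor of $W\times\mathbb{R}$ and hence to each other, which proves the claim.

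The remaining case $k=2$ is the main obstacle. Here $W$ is a point, $F:=W\times\mathbb{R}^2$ is a flat plane in $X$, and $\Gamma'$ is virtually $\mathbb{Z}^2$ acting as a crystallographic group of translations on $F$. This is exactly the situation the definition \eqref{eq-defin-lambda0} of $\lambda_0$ is designed for, via the Bieberbach constant $J_0$ in dimension $2$: one bounds the index of the translation sublattice of $\Gamma'$ by $J_0$, bounds the displacements of its generators by a controlled multiple of $\lambda_0$ — small compared with $r_0/(P_0+1)$ — using Lemma~\ref{lemma-finite-index-generators}, and then runs a packing estimate in the spirit of the computation at the end of the proof of Proposition~\ref{prop-dimension-1}, comparing the flat geometry of $F$ with the geometry of $X$ transverse to $F$, to force $X=F=\mathbb{R}^2=\mathbb{R}\times\mathbb{R}$; in that event the Proposition holds with $T=\mathbb{R}$ and the cocompact statement is Bieberbach's theorem. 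I expect making this last packing contradiction clean to be the technical crux; everything else (the reduction to $\dim X=2$, the $k=1$ analysis, the chaining, and the identification of $Y$ as a tree) is routine given the results quoted above. Finally, for the cocompact refinement under "all axes parallel" I would follow the second half of the proof of Lemma~\ref{lemma-splitting-cyclic}: discreteness of $\Gamma$ forces the translation lengths of any two elements of $\Lambda$ sharing a common axis to be commensurable, so a suitable power of an element of $\Lambda$ whose minimal set contains $\overline{B}(x_0,2R)$ (with $R$ the maximal displacement of a finite generating set) is central in $\Gamma$; then $\Gamma$ preserves the product $X=T\times\mathbb{R}$ (using slimness to upgrade agreement on an open set to global equality), and the flat torus theorem gives a finite-index subgroup splitting as $\Gamma_T\times\mathbb{Z}$ with $\Gamma_T$ acting discretely and cocompactly on $T$.
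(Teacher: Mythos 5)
Your overall architecture matches the paper's: reduce to dimension $2$, use $X=\bigcup_{g\in\Lambda}\textup{Min}(g)$, split into the dichotomy ``virtually cyclic vs.\ virtually $\mathbb{Z}^2$'' for pairs with intersecting minimal sets (the paper packages this as Lemma \ref{lemma-bieberbach-dim-2}, you re-derive it from Proposition \ref{prop-structure-virt-nilpotent}), handle the cyclic case by parallelism of axes and the parallel-set splitting (this is exactly Lemma \ref{lemma-splitting-cyclic}), and reserve the Bieberbach constant $J_0$ in \eqref{eq-defin-lambda0} for the $\mathbb{Z}^2$ case. The $k=1$ analysis and the cocompact refinement are fine.

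The genuine gap is the $k=2$ case, which you correctly identify as the crux but whose sketch points in a direction that does not work. You propose a packing estimate ``comparing the flat geometry of $F$ with the geometry of $X$ transverse to $F$'' to force $X=F=\mathbb{R}^2$. Two problems. First, translates of a point $x'$ off a flat plane $F$ by short translations of $F$ need not be $2r_0$-separated: picture a flat half-plane glued to $F$ along a line $L$; a point in the half-plane at distance $r_0$ from $F$ and its translates along $L$ are at mutual distance equal to the (small) translation length, so no packing contradiction arises transversally to $F$. Second, no purely local estimate near $F$ can yield the \emph{global} equality $X=F$. The paper's argument is structured differently: it lets $h^{J_0}$ act on $\textup{Min}(g^{J_0})=D(g^{J_0})\times\mathbb{R}$, observes that the induced isometry $h'$ of the \emph{tree} $D(g^{J_0})$ must be hyperbolic (an $h'$-fixed point would give a common axis and a virtually cyclic group), and then runs the packing argument inside the tree factor: for a point $x$ of $D(g^{J_0})$ off the axis of $h'$, the tree structure forces the geodesic from $x$ to $h'x$ to pass through the axis, so suitable extensions of $[(h')^ny,(h')^nx]$ produce $2r_0$-separated points in $\overline{B}(y,3r_0)$ --- it is this concatenation-is-geodesic property of trees, absent for a general flat $F\subset X$, that makes the separation work. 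This gives $D(g^{J_0})=\mathbb{R}$, hence $\textup{Min}(g^{J_0})=\mathbb{R}^2$; the flat torus theorem plus $\dim X\leq 2$ then give $\textup{Min}(g^{J_0})=\textup{Min}(h^{J_0})=\mathbb{R}^2$; and finally $X=\mathbb{R}^2$ follows only by chaining through \emph{all} of $\Lambda$ and invoking $X=\bigcup_{f\in\Lambda}\textup{Min}(f)$ --- a global step missing from your plan. You need to replace your transverse-to-$F$ estimate with this tree-factor argument and add the final chaining step.
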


First of all we need to characterize trees as those CAT$(0)$-spaces with geometric dimension $1$.

\begin{lemma}[See also the proof of Lemma 6.3 of \cite{OP21}]
	\label{lemma-tree-recognition}
	Let $Y$ be a \textup{CAT}$(0)$-space such that $\Sigma_y Y$ is discrete and not empty for every $y \in Y$, in other words $\textup{GD}(Y) = 1$. Then $Y$ is a real tree, i.e. for every two points $x,z \in Y$ the image of any continuous injective map $\alpha \colon [a,b] \to X$ such that $\alpha(a)=x$ and $\alpha(b)=z$ coincides with the image of $[x,z]$. 
	Moreover, if $Y$ is geodesically complete and proper then it is a simplicial tree.
\end{lemma}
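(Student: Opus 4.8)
The plan is to prove the two assertions separately: for the first I will show that every geodesic triangle of $Y$ is $0$-slim, which by classical facts about $\mathbb R$-trees forces $Y$ to be a real tree in the stated sense; for the second I will combine this with a compactness argument coming from properness.

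\emph{First part: $Y$ is a real tree.} It suffices to prove $[x,z]\subseteq[x,y]\cup[y,z]$ for all $x,y,z\in Y$, since a geodesic space all of whose triangles are $0$-slim is an $\mathbb R$-tree, and in an $\mathbb R$-tree the geodesic between two points is the image of every arc joining them (see e.g.\ \cite{BH09}). Let $m$ be the point of the convex set $[x,y]\cap[x,z]$ farthest from $x$, so that $[x,y]\cap[x,z]=[x,m]$ and consequently $[m,y]\cap[m,z]=\{m\}$. As $[x,z]=[x,m]\cup[m,z]$ and $[x,m]\subseteq[x,y]$, it is enough to show $m\in[y,z]$. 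Assume not; then $[y,z]\subseteq Y\setminus\{m\}$. On $Y\setminus\{m\}$ the map $w\mapsto[w]_m\in\Sigma_mY$ is continuous: if $w_n\to w$ with $d(m,w)>0$, the Euclidean comparison angle $\bar\angle_m(w_n,w)$ tends to $0$, and $\angle_m(w_n,w)\le\bar\angle_m(w_n,w)$ by the CAT$(0)$ inequality, hence $[w_n]_m\to[w]_m$. Since $[y,z]$ is connected and $\Sigma_mY$ is discrete, this map is constant on $[y,z]$, so $[y]_m=[z]_m$, i.e.\ $\angle_m(y,z)=0$. But two geodesics issuing from $m$ with vanishing Alexandrov angle coincide on a common initial segment (\cite{BH09}, Ch.\ II), contradicting $[m,y]\cap[m,z]=\{m\}$. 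Therefore $m\in[y,z]$, and $Y$ is a real tree.

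\emph{Second part: $Y$ is a simplicial tree when proper and geodesically complete.} First, every point has finite valence: the sphere $S(y,\varepsilon)$ is compact by properness, and $w\mapsto[w]_y$ maps it continuously \emph{onto} $\Sigma_yY$ (onto, by geodesic completeness), so $\Sigma_yY$ is compact as well as discrete, hence finite. It remains to show the branch locus $B=\{y\colon \#\Sigma_yY\ge3\}$ is discrete; granting this, and noting that geodesic completeness forbids valence‑$1$ points, $Y$ is the geometric realisation of a locally finite simplicial tree with vertex set $B$ (or a point, or isometric to $\mathbb R$ if $B=\emptyset$). If $B$ were not discrete there would be distinct $b_n\in B$ with $b_n\to p$ and $d(p,b_n)$ strictly decreasing; passing to a subsequence (finiteness of $\Sigma_pY$) we may assume $[b_n]_p=w$ for a fixed direction $w$, and we fix a geodesic ray $\gamma$ from $p$ in direction $w$. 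For each $n$ the geodesics $[p,b_n]$ and $\gamma$ agree on a maximal nondegenerate initial segment $[p,\gamma(t_n)]$; if $t_n<d(p,b_n)$ then $c_n:=\gamma(t_n)$ is a branch point (carrying the three distinct directions towards $p$, along $\gamma$, and towards $b_n$), and if $t_n=d(p,b_n)$ then $c_n:=b_n=\gamma(t_n)\in B$. In either case $c_n\in\gamma$ and $d(p,c_n)=t_n\le d(p,b_n)\to0$, and since no positive value can be attained by $t_n$ infinitely often, the $c_n$ comprise infinitely many distinct branch points lying on $\gamma$. At each $c_n$ pick a direction $v_n$ transverse to $\gamma$ and a geodesic ray $\rho_n$ from $c_n$ in direction $v_n$; in the tree $Y$ one checks $\rho_n\cap\gamma=\{c_n\}$ and $\rho_n\cap\rho_m=\emptyset$ for $n\ne m$. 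Then $q_n:=\rho_n(1)$ satisfies $d(p,q_n)=d(p,c_n)+1$, which is bounded, while $d(q_n,q_m)=2+|t_n-t_m|>2$ for $n\ne m$; thus $\{q_n\}$ is an infinite $2$-separated subset of a bounded, hence compact, ball — contradicting properness. Hence $B$ is discrete and $Y$ is a simplicial tree.

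\emph{Main difficulty.} The crux of the first part is the continuity of the direction map $w\mapsto[w]_m$ on $Y\setminus\{m\}$: this is precisely what upgrades the mere discreteness of $\Sigma_mY$ into the rigid alternative $\angle_m(y,z)\in\{0,\pi\}$ that a tree needs. In the second part the delicate point is the reduction of an \emph{arbitrary} accumulation of branch points to one occurring along a single geodesic ray — obtained above by passing from $b_n$ to the divergence points $c_n=\gamma(t_n)$, which automatically lie on $\gamma$ — so that the transverse rays $\rho_n$ are genuinely pairwise disjoint and the compactness contradiction can be run; the remaining verifications are routine tree combinatorics.
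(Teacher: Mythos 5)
Your reduction to showing $m\in[y,z]$ and the locally-constant-direction device are sound, but the last step of the first part has a genuine gap. Having shown $[y]_m=[z]_m$, i.e.\ $\angle_m(y,z)=0$, you conclude via the claim that ``two geodesics issuing from $m$ with vanishing Alexandrov angle coincide on a common initial segment''. This is not a theorem of \cite{BH09} and is false in general \textup{CAT}$(0)$-spaces: the Alexandrov angle is only a pseudometric on germs of geodesics at $m$ --- which is exactly why $\Sigma_mY$ is defined as a quotient by the zero-angle relation --- and there are standard examples of two geodesics issuing from a point, meeting only at that point, with zero angle between them (e.g.\ the two boundary arcs of the region between two convex curves tangent at $m$, with the induced length metric). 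In such examples $\Sigma_mY$ can even reduce to a single point, so discreteness of $\Sigma_mY$ by itself does not exclude the configuration $\angle_m(y,z)=0$ with $[m,y]\cap[m,z]=\{m\}$; excluding it under the hypotheses of the lemma is essentially the content of the lemma, so as written the argument is circular at this point. The paper sidesteps the issue by running your locally-constant argument at an \emph{interior} point $c(s)$ of the geodesic against an arbitrary injective path joining its endpoints: there the two endpoint directions satisfy $\angle_{c(s)}(x,z)=\pi$, hence are honestly distinct in $\Sigma_{c(s)}Y$, so the direction map cannot be constant along the path and the path is forced through $c(s)$. Your proof is repaired the same way: apply your continuity argument at each interior point $c(s)$ of $[y,z]$ to the injective path $[y,m]\cup[m,z]$ (injective because $[m,y]\cap[m,z]=\{m\}$); this gives $[y,z]\subseteq[y,m]\cup[m,z]$, and a short connectedness argument then yields $m\in[y,z]$.

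The second part of your proof is correct and genuinely different from the paper's, which disposes of it in one line by invoking the Lytchak--Nagano structure theory of geodesically complete proper \textup{CAT}$(0)$-spaces recalled in Section \ref{subsec-cat(0)}. Your elementary route --- finiteness of each $\Sigma_yY$ from properness plus geodesic completeness, and discreteness of the branch locus via the $2$-separated points $q_n$ --- is self-contained and checks out, but it of course rests on the real-tree property established (with the gap above) in the first part.
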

\begin{proof}
	Let $x,z \in Y$, $c = [x,z]$ and $\alpha$ be a continuous injective map $\alpha \colon [a,b] \to Y$ such that $\alpha(a)=x$ and $\alpha(b)=z$. We want to show that $\text{Im}(\alpha) = \text{Im}(c)$, where $c\colon [0,d(x,z)] \to Y$ is the geodesic $[x,z]$. We first prove that $\text{Im}(c) \subseteq \text{Im}(\alpha)$. Let $s \in (0,d(x,z))$. The set $\Sigma_{c(s)}Y$ is discrete, say equal to $\lbrace v_i\rbrace_{i\in I}$. We consider the sets 
	$A_i = \lbrace t\in [a,b] \text{ s.t. } [\alpha(t)]_{c(s)} = v_i \rbrace$, for $i \in I$. Observe that the quantity $[\alpha(t)]_{c(s)}$ is defined as soon as $\alpha(t) \neq c(s)$. By discreteness of $\Sigma_{c(s)}Y$ it is easy to check that each $A_i$ is open and closed. By connectedness of $[a,b]$ there are two possibilities: either there exists $i\in I$ such that $A_i = [a,b]$ or there is some $t \in (a,b)$ such that $\alpha(t) = c(s)$. The first possibility has to be excluded since clearly $[\alpha(a)]_{c(s)} \neq [\alpha(b)]_{c(s)}$, so $c(s) \in \text{Im}(\alpha)$.\\
	It remains to prove the other inclusion. 
	We set $A = \lbrace t\in [a,b] \text{ s.t. } \alpha(t) \in \text{Im}(c)\rbrace$. It is a closed subset of $[a,b]$, so the complementary is the disjoint union of countably many intervals, $A^c = \bigcup_{i=1}^N (a_i,b_i)$, $N\in \mathbb{N}\cup\lbrace \infty \rbrace$. In the interval $(a_i,b_i)$ we have $\text{Im}(\alpha\vert_{[a_i,b_i]})\cap c = \lbrace \alpha(a_i), \alpha(b_i)\rbrace$. By injectivity of $\alpha$ we know that $\alpha(a_i) \neq \alpha(b_i)$, so the subsegment of $c$ joining these two points is not trivial. However applying the first part of the proof to the continuous map $\alpha\vert_{[a_i,b_i]}$ we obtain $\text{Im}(\alpha\vert_{[a_i,b_i]})\cap c \supseteq [\alpha(a_i), \alpha(b_i)]$, a contradiction. This shows that $A=[a,b]$. \\
	If a real tree $Y$ is locally compact and geodesically complete then it is simplicial by the description of geodesically complete, proper, CAT$(0)$-spaces of \cite{LN19} recalled in Section \ref{subsec-cat(0)}.
\end{proof}
The second tool we need is a better description of the almost stabilizers in dimension at most $2$.
	\begin{lemma}
		\label{lemma-bieberbach-dim-2}
		Let $X$ be a complete, geodesically complete, \textup{CAT}$(0)$-space which is $P_0$-packed at scale $r_0$, and assume that the dimension of $X$ is at most $2$. Let $\Gamma$ be a discrete group of isometries of $X$ and let $\varepsilon_0$ be the Margulis constant given by Theorem \ref{theorem-Margulis-nilpotent}. Then for every $x\in X$ one of the following three mutually exclusive possibilities occur:
		\begin{itemize}
			\item[(i)] $\overline{\Gamma}_{\varepsilon_0}(x)$ has no hyperbolic isometries;
			\item[(ii)] $\overline{\Gamma}_{\varepsilon_0}(x)$ has no parabolic isometries and it is virtually $\mathbb{Z}$;
			\item[(iii)] $\overline{\Gamma}_{\varepsilon_0}(x)$ has no parabolic isometries and it has a subgroup of index at most $J_0$ which is isomorphic to $\mathbb{Z}^2$.
		\end{itemize}
	\end{lemma}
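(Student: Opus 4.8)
The plan is to apply Proposition~\ref{prop-structure-virt-nilpotent} to the finitely generated, discrete, virtually nilpotent group $\Gamma' := \overline{\Gamma}_{\varepsilon_0}(x)$ (it is virtually nilpotent by Theorem~\ref{theorem-Margulis-nilpotent}, and finitely generated since $\Gamma$ is discrete, so $\overline{\Sigma}_{\varepsilon_0}(x)$ is finite). Proposition~\ref{prop-structure-virt-nilpotent}(i)--(ii) gives a $\Gamma'$-invariant convex subset $W\times\mathbb{R}^k$ of $X$ on which $\Gamma'$ acts preserving the splitting, with each $\gamma=(\gamma',\gamma'')$ having $\gamma'$ non-hyperbolic, and a finite index normal subgroup splitting as $\Gamma_W\times\mathbb{Z}^k$. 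The first dichotomy is whether $\Gamma'$ contains a hyperbolic isometry. If it does not, we are in case (i). So assume $\Gamma'$ has a hyperbolic isometry; then $k\geq 1$, because a non-hyperbolic first component forces the hyperbolic behaviour to come from the $\mathbb{Z}^k$ factor, so $k\geq 1$.

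Next I would exploit the dimension bound. Since $\dim X\leq 2$ and $W\times\mathbb{R}^k$ is a convex (hence geodesically complete, at least after noting it is convex in a geodesically complete space — more precisely its dimension is at most $2$) subset of $X$, we have $k+\dim W\leq 2$, so $k\in\{1,2\}$. First suppose $k=2$: then $\dim W=0$, so $W$ is a point (it is convex, hence connected), $\Gamma_W$ is trivial, and a finite index subgroup of $\Gamma'$ is $\mathbb{Z}^2$ acting as a lattice on $\mathbb{R}^2$. In particular $\Gamma'$ has no parabolic elements (every element of $\Gamma_W\times\mathbb{Z}^k=\mathbb{Z}^2$ is semisimple, and by Proposition~\ref{prop-structure-virt-nilpotent}(iv) applied to the virtually abelian group, or directly, $\Gamma'$ is virtually abelian with no parabolics). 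The index of the $\mathbb{Z}^2$ subgroup can be taken at most $J_0$ by Bieberbach's theorem in dimension $2$: indeed $\Gamma'$ maps to a discrete cocompact group of isometries of $\mathbb{R}^2$ (the projection onto the $\mathbb{R}^2$ factor), with finite kernel (the kernel fixes $W=\{pt\}$ pointwise on the invariant set and is a finite subgroup — here one must be slightly careful, but since a finite index subgroup is already $\mathbb{Z}^2$ the projection has finite kernel), and Bieberbach gives a lattice $\mathbb{Z}^2$ of index $\leq J_0$ inside it, which pulls back appropriately; this lands us in case (iii).

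Now suppose $k=1$. Then a finite index subgroup of $\Gamma'$ is $\Gamma_W\times\mathbb{Z}$, with $\Gamma_W$ discrete nilpotent acting on $W$ (of dimension $\leq 1$) without hyperbolic isometries. I would split according to whether $\Gamma_W$ (equivalently $\Gamma'$) contains a parabolic isometry. If $\Gamma_W$ has a parabolic element $p$, then $(p,\mathrm{id})$ is parabolic in $\Gamma'$; but we can then extract a $\mathbb{Z}^2$: $p$ together with the $\mathbb{Z}$ factor — however $p$ is parabolic, so this does not directly give case (iii). Rather, I expect the correct statement is that the parabolic case must be \emph{excluded} here, and the main obstacle is precisely showing that in dimension $\leq 2$ the group $\overline{\Gamma}_{\varepsilon_0}(x)$ cannot simultaneously contain a hyperbolic and a parabolic isometry. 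The argument I would attempt: if $\Gamma_W$ has a parabolic isometry then $W$ is an infinite-dimensional-enough space to support it, but $\dim W\leq 1$; a nilpotent group acting on a dimension $\leq 1$ CAT$(0)$-space (a real tree, by Lemma~\ref{lemma-tree-recognition}) without hyperbolic isometries is either finite or fixes a point or an end — and a parabolic isometry of a tree fixes a unique end, so $\Gamma_W$ would fix that end and thus be (virtually) contained in a horocyclic-type stabilizer; combined with the packing condition this should force $\Gamma_W$ finite, contradicting the presence of the parabolic. If $\Gamma_W$ is finite, then $\Gamma'$ is virtually $\mathbb{Z}$, has no parabolics (all elements semisimple by Proposition~\ref{prop-structure-virt-nilpotent}(iii) since the first component has finite order), and we are in case (ii). Finally one checks the three cases are mutually exclusive: (i) versus (ii),(iii) is the presence of a hyperbolic; (ii) versus (iii) is distinguished by the rank, since a virtually $\mathbb{Z}$ group cannot contain $\mathbb{Z}^2$. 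The hard part, as indicated, is ruling out parabolics in the $k=1$ subcase, i.e. showing a discrete nilpotent group acting on a proper geodesically complete real tree with no hyperbolic elements and a parabolic element cannot exist — this is where the tree recognition lemma and the packing (local finiteness / no accumulation of fixed-point sets) enter.
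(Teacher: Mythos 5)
Your overall architecture matches the paper's: apply Proposition~\ref{prop-structure-virt-nilpotent} to $\overline{\Gamma}_{\varepsilon_0}(x)$, get the invariant convex set $W\times\mathbb{R}^k$ with $\Gamma_W\times\mathbb{Z}^k$ of finite index, observe $k\geq 1$ when a hyperbolic element exists and $k\leq 2$ by the dimension bound, and handle $k=2$ via Bieberbach exactly as the paper does. The mutual exclusivity discussion is also fine.

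However, in the $k=1$ case there is a genuine gap, and you flag it yourself: you do not actually rule out parabolic elements of $\Gamma_W$. Your sketch (a parabolic of a tree ``fixes a unique end'', so $\Gamma_W$ sits in a horocyclic-type stabilizer, and ``the packing condition should force $\Gamma_W$ finite'') is not a proof and, more importantly, chases a phantom: the point you are missing is the standard fact that a real tree admits \emph{no} parabolic isometries at all --- every isometry of an $\mathbb{R}$-tree is semisimple (its characteristic set is non-empty). This is exactly how the paper closes the case: since $\mathrm{GD}(W)\leq 1$, Lemma~\ref{lemma-tree-recognition} identifies $W$ as a real tree (or a point); hence $W$ carries no parabolics, so $\Gamma_W$, which by Proposition~\ref{prop-structure-virt-nilpotent} acts without hyperbolics, consists entirely of elliptic elements. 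A finitely generated group of elliptic isometries of a tree has a global fixed point, so discreteness forces $\Gamma_W$ to be finite, whence $\overline{\Gamma}_{\varepsilon_0}(x)$ is virtually $\mathbb{Z}$ and has no parabolics (its elements have elliptic first component, hence are semisimple by Proposition~\ref{prop-structure-virt-nilpotent}(iii)). No packing or horosphere argument is needed. With this one fact inserted, your proof becomes essentially the paper's; without it, the central case of the lemma is unproved.
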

	\begin{proof}
		As $\overline{\Gamma}_{\varepsilon_0}(x)$ is virtually nilpotent, by Proposition \ref{prop-structure-virt-nilpotent} we can find a $\overline{\Gamma}_{\varepsilon_0}(x)$-invariant convex subset $W\times \mathbb{R}^k$ of $X$ and a finite index subgroup of $\overline{\Gamma}_{\varepsilon_0}(x)$ of the form $\Gamma_W \times \mathbb{Z}^k$, where $\Gamma_W$ acts on $W$ by parabolic or elliptic isometries. If there is at least one hyperbolic isometry in $\overline{\Gamma}_{\varepsilon_0}(x)$ then $k\geq 1$ and of course $k\leq 2$. We start with the case $k=2$. By dimensional reasons $W$ must have dimension $0$, so it must be a point. This means that $\overline{\Gamma}_{\varepsilon_0}(x)$ acts on $\mathbb{R}^2$ cocompactly and discretely because it contains a lattice of $\mathbb{R}^2$ as finite index subgroup. By Bieberbach's Theorem we conclude that $\overline{\Gamma}_{\varepsilon_0}(x)$ has a subgroup of index at most $J_0$ which is isomorphic to $\mathbb{Z}^2$.\\
		Now suppose $k=1$. Then $W$ is a CAT$(0)$-space of geometric dimension at most $1$, so it is a real tree by Lemma \ref{lemma-tree-recognition} or a point. In any case $W$ has no parabolic isometries, so $\Gamma_W$ contains only elliptic elements. As a consequence $\Gamma_W$ is finite, so $\overline{\Gamma}_{\varepsilon_0}(x)$ has no parabolic isometries and it is virtually cyclic. 
	\end{proof}

\begin{proof}[Proof of Proposition \ref{prop-dimension-2}]
	We know that $X=\bigcup_{g\in \Lambda}\text{Min}(g)$. If $X$ has a point of dimension $1$ then  $X=\mathbb{R}$ by Proposition \ref{prop-dimension-1}. We can therefore suppose that every point of $X$ is of dimension $2$. 
	The minimal set of $g\in \Lambda$ is of the form $\text{Min}(g) = D(g) \times \mathbb{R}$. Since every point of $X$ has dimension $2$ and $\text{Min}(g)$ has non-empty interior we deduce that $D(g)$ is not a point, and by Lemma \ref{lemma-tree-recognition} it is a (maybe compact) real tree. If $g,h \in \Lambda$ are such that $\text{Min}(g) \cap \text{Min}(h) \neq \emptyset$ then either $\langle g,h \rangle$ is virtually cyclic or $\langle g^{J_0},h^{J_0} \rangle \cong \mathbb{Z}^2$ by Lemma \ref{lemma-bieberbach-dim-2}. Let us study the second case. Since $g^{J_0}$ and $h^{J_0}$ commute, the isometry $h^{J_0}$ acts on $\text{Min}(g^{J_0}) = D(g^{J_0}) \times \mathbb{R}$ preserving the product decomposition. Let $h'$ be the isometry of   $D(g^{J_0})$   induced by  $h^{J_0}$. The isometry $h'$ cannot have fixed points otherwise $h^{J_0}$ and $g^{J_0}$ have a common axis, and they would generate a virtually cyclic group. Therefore $h'$ is a hyperbolic isometry of $D(g^{J_0})$ and we call $c$ its axis. We want to show that $D(g^{J_0}) = c$. Suppose there exists a point $x\in D(g^{J_0})$ which is not in $c$. Let $y$ be its projection on $c$. As $D(g^{J_0})$ is a tree, the geodesic in $D(g^{J_0})$, and so in $X$, between $x$ and $h'x$ is the concatenation $[x,y] \cup [y,h'y] \cup [h'y,h'x]$. We extend the geodesic segment $[y,x]$ in $X$ beyond $x$, defining a geodesic ray $c_y$. We do the same for the geodesic segment $[h'y,h'x]$ defining a geodesic ray $c_{h'y}$. We observe that the concatenation $c_y \cup [y,h'y] \cup c_{h'y}$ is a geodesic line in $X$. In particular the distance between the two points along $c_y$ and $c_{h'y}$ at distance $r_0$ from $y$ and $h'y$ is at least $2r_0$. We can repeat the construction for all points $(h')^n x$, extending the geodesic segments $[(h')^ny, (h')^n x]$ beyond $(h')^nx$ and finding points that are at distance at least $2r_0$ one from the other. Clearly $\ell(h') \leq \ell(h^{J_0}) \leq J_0\cdot\lambda_0$, so arguing as in the proof of Proposition \ref{prop-dimension-1} we conclude we can find more than $P_0$ points inside $\overline{B}(y,3r_0)$ that are $r_0$-separated as soon as $\lambda_0 \leq \frac{4r_0}{J_0(P_0+1)}$ which is the case by \eqref{eq-defin-lambda0}. This violates the packing assumption on $X$, so $D(g^{J_0}) = \mathbb{R}$ and Min$(g^{J_0})$ must be isometric to $\mathbb{R}^2$. The same conclusion holds for Min$(h^{J_0})$. By the flat torus theorem we know that 
	$$Y \times \mathbb{R}^2 = \text{Min}(g^{J_0}) \cap \text{Min}(h^{J_0}) \subseteq \text{Min}(g^{J_0}) = \mathbb{R}^2$$
	for some convex subset $Y$ of $X$. Since the dimension of $X$ is at most $2$ the set $Y$ must be a point and the inclusion above is an equality. This shows that $\text{Min}(g^{J_0}) = \text{Min}(h^{J_0}) = \mathbb{R}^2$. The same argument shows that also $\text{Min}(g^{pJ_0}) = \text{Min}(g^{J_0}) = \mathbb{R}^2$ for all $p\neq 0$, because $h^{J_0}$ acts on $\text{Min}(g^{pJ_0})$. In the same way we have $\text{Min}(h^{pJ_0}) = \text{Min}(h^{J_0}) = \mathbb{R}^2$ for all $p\neq 0$.\\
	We claim that $\text{Min}(f) \subseteq \mathbb{R}^2$ for all $f\in \Lambda$. It is not difficult to see it by taking a path $g = f_0, f_1,\ldots, f_n = f$ such that $f_i \in \Lambda$ and $\text{Min}(f_i) \cap \text{Min}(f_{i+1}) \neq \emptyset$ for all $i=0,\ldots,n-1$. Therefore $X = \bigcup_{g \in \Lambda}\text{Min}(g) = \mathbb{R}^2$ which is $\mathbb{R} \times \mathbb{R}$. If moreover $\Gamma$ is cocompact then the thesis is implied by Bieberbach's Theorem in dimension $2$.\\
	It remains the case where $\langle g,h \rangle$ is virtually cyclic for all $g,h \in \Lambda$ such that $\text{Min}(g)\cap \text{Min}(h) \neq \emptyset$. An application of Lemma \ref{lemma-splitting-cyclic} gives the splitting of $X$ as $T\times \mathbb{R}$. Here $T$ is necessarely a simplicial tree because of Lemma \ref{lemma-tree-recognition}. If $\Gamma$ is cocompact then the thesis follows from the second part of Lemma \ref{lemma-splitting-cyclic}.

\end{proof}

\bibliographystyle{alpha}
\bibliography{Thin_actions_on_CAT(0)-spaces}

 \end{document}